\theoremstyle{plain}
\newtheorem{theorem}{Theorem}[section]
\newtheorem*{theorem*}{Theorem}
\newtheorem{proposition}[theorem]{Proposition}
\newtheorem{lemma}[theorem]{Lemma}
\newtheorem{cor}[theorem]{Corollary}
\newtheorem*{cor*}{Corollary}
\newtheorem*{conjecture}{Conjecture}
\newtheorem{defi}[theorem]{Definition}
\newcommand {\R} {\mathbb{R}} \newcommand {\Z} {\mathbb{Z}}
\newcommand {\T} {\mathbb{T}} \newcommand {\N} {\mathbb{N}}
\newcommand {\p} {\partial}
\newcommand {\dt} {\partial_t}
\DeclareMathOperator{\di}{div}
\begin{document}
\title[On the inviscid Boussinesq equations]{On stability estimates for the inviscid
  Boussinesq equations}
\begin{abstract}
  We consider the (in)stability problem of the inviscid 2D Boussinesq equations
  near a combination of a shear flow $v=(y,0)$ and a stratified temperature
  $\theta=\alpha y$ with $\alpha>\frac{1}{4}$.
  We show that for any $\epsilon>0$ there exist non-trivial explicit solutions,
  which are initially perturbations of size $\epsilon$, and grow to size $1$ on a time scale
  $\epsilon^{-2}$.
  Moreover, the (simplified) linearized problem around these non-trivial states
  exhibits improved upper bounds on the possible size of norm inflation for frequencies larger and smaller
  than $\epsilon^{-4}$.
\end{abstract}
\author{Christian Zillinger}
\address{Karlsruhe Institute of Technology, Englerstraße 2,
  76131 Karlsruhe, Germany}
\email{christian.zillinger@kit.edu}
\keywords{Boussinesq equations, inviscid, resonances, stability}
\subjclass[2010]{35Q35,35Q79,76B03,35B40}
\maketitle

\section{Introduction and Main Results}
\label{sec:intro}
In this article we consider the stability of the incompressible, inviscid Boussinesq equations in a two-dimensional periodic channel
\begin{align}
  \begin{split}
  \dt v + v \cdot \nabla v + \nabla p &= \theta e_2, \\
  \dt \theta + v \cdot \nabla \theta &=0, \\
  \di (v) &= 0, \\
  (t,x,y) &\in \R^{+}\times \T \times \R, 
\end{split}
\end{align}
near the stationary solution
\begin{align}
  \label{eq:groundstate}
  \begin{split}
  v= (y,0), \  \theta= \alpha y,
\end{split}
\end{align}
where $\alpha > \frac{1}{4}$ is a constant.

The Boussinesq equations are a common model of the evolution of a heat conducting fluid in terms of its
velocity $v$ and temperature $\theta$ and may additionally incorporate viscosity
or thermal dissipation.
In particular, questions of well-posedness and asymptotic behavior in regimes
with partial dissipation \cite{tao2020stability,masmoudi2020stability,deng2020stability,wu2019stability,doering2018long,li2016global,cao2013global} or the inviscid problem \cite{elgindi2015sharp,widmayer2018convergence} have been an area with strong
research activity in recent years.

The term
\begin{align*}
  \theta e_2
\end{align*}
models buoyancy and causes hotter fluid to rise above colder fluid, where $-e_2$ is the direction of gravity.
It is well known that, in the case without shear ($v=0$), this buoyancy might
lead to the so-called Rayleigh-B\'enard instability, if hotter fluid is below
colder fluid, $\alpha<0$.

In contrast, if $\alpha>0$ is sufficiently large then the Miles-Howard criterion
\cite{howard1961note} rules out spectral instability, which is the setting
considered in this article.
As we state in Lemma \ref{lem:Etilde} and recall in Section \ref{sec:hom} the linearized equations around
the stationary solution \eqref{eq:groundstate} are stable in arbitrary Sobolev regularity globally
in time.
However, for the nonlinear equations we construct explicit solutions growing as
$\epsilon(1+t^2)^{1/4}$ as $t$ increases, which hence are only small on a time
scale $t<\epsilon^{-2}$.
Moreover, even when restricting to this time scale, higher, Gevrey regularity is required in
order to establish stability \cite{bedrossian21,tao20192d}. In the corresponding
viscous problem instead Sobolev regularity is required, however with a smallness
condition depending on the size of the viscosity \cite{zhai2022stability}.

The aims of the present article are two-fold:
\begin{itemize}
\item For related equations such as the Euler equations
  \cite{dengmasmoudi2018,dengZ2019}, Vlasov-Poisson equations \cite{Villani_long,bedrossian2016nonlinear} or partially viscous
  Boussinesq equations \cite{zillinger2021echo} it is known that the norm
  inflation of the nonlinear dynamics is tied to the interaction of non-trivial low frequency solutions, which we call traveling waves, and their
  interaction with high frequency perturbations.
  We thus construct these traveling waves for the present problem and discuss for which choices of perturbations and parameters one might expect the largest possible norm inflation.
\item For these linearized equations we identify multiple frequency regimes
  depending on the initial size $\epsilon>0$ of the waves and the time interval
  under consideration.
  For frequencies  $|\xi| < \epsilon^{-4}$ we establish an upper bound for perturbations concentrated
  at frequency $\xi$ by $\exp((\epsilon \xi)^{2/3})$. In particular, if $|\xi|\leq \epsilon^{-\alpha}$ with
  $1<\alpha\leq 4$ this factor is bounded by $\exp(\xi^{2/3(1-1/\alpha)})$.
  This bound hence matches the control by  $\exp(\sqrt{\xi})$, that is Gevrey $2$ regularity, as in the
  nonlinear problem \cite{bedrossian21} for $\xi= \epsilon^{-4}$, but exhibits
  improved bounds if $\xi$ is smaller.
  As a complementary result, if $\xi> C \epsilon^{-4}$ with a sufficiently large
r constant $C>1$, we instead obtain an upper bound which is uniform in $\xi$ and
  $\epsilon$ on the time scale under consideration.
\end{itemize}
We remark that for technical reasons we consider a simplified model, which
fixes the underlying shear flow. As we discuss in Section \ref{sec:technical} this
simplification can be removed in time intervals where the main norm inflation
takes place and for large times. For small times we provide a rough bound for
the non-simplified model, but expect that it can be improved to a uniform bound
with substantial additional technical effort.

Before stating our main results, we recall that the linearized problem around the stationary solution
\eqref{eq:groundstate} is stable, when working in coordinates moving with the
shear and choosing suitable unknowns.
The following lemma is adapted from \cite{bedrossian21,tao2020stability}.
\begin{lemma}
  \label{lem:homogeneous}
  Let $\alpha>\frac{1}{4}$. Then the linearized Boussinesq equations
  around the stationary solution \eqref{eq:groundstate} are stable in the sense
  that for any initial data $\omega, \theta$ with $\int \omega dx = \int \theta dx=0$ the energy
  \begin{align*}
    \alpha \|((\p_x^{-2}\Delta)^{-1/4}\omega)(t,x-ty,y)\|^2_{L^2} + \|((\p_x^{-2}\Delta)^{1/4}\theta)(t,x-ty,y)\|_{L^2}^2
  \end{align*}
  is bounded above and below for all times, uniformly in terms of its initial
  value, with a constant depending only on $\alpha$.
\end{lemma}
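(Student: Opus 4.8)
The plan is to linearise around \eqref{eq:groundstate}, pass to coordinates moving with the shear, and Fourier transform, so that the problem decouples into a one–parameter family of second order ODEs of Taylor--Goldstein type; the two–sided energy bound then reduces to sharp bounds on these ODEs, which I would obtain from an explicit representation in terms of hypergeometric functions. Concretely, I write the velocity as $v=(y,0)+\nabla^{\perp}\Delta^{-1}\omega$ and the temperature as $\alpha y+\theta$, with $\omega,\theta$ the perturbations. Taking the scalar curl of the momentum equation and keeping only linear terms (the background vorticity $-1$ is constant, so its advection drops out) gives
\begin{align*}
  \dt \omega + y\,\p_x\omega &= \p_x\theta,\\
  \dt \theta + y\,\p_x\theta &= -\alpha\,\p_x\Delta^{-1}\omega .
\end{align*}
Passing to the moving profile $\tilde\omega(t,z,y)=\omega(t,z+ty,y)$, $\tilde\theta(t,z,y)=\theta(t,z+ty,y)$ removes the transport term $y\p_x$, and the Laplacian becomes the time–dependent Fourier multiplier $-\lambda(t)^2$ with $\lambda(t)^2=\xi^2+\kappa(t)^2$ and $\kappa(t)=\eta-t\xi$. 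Thus $\p_x^{-2}\Delta$ becomes multiplication by $m(t)=\lambda(t)^2/\xi^2$, and the two weights in Lemma~\ref{lem:homogeneous} are $m^{\mp1/2}$. For each fixed $(\xi,\eta)$ with $\xi\neq0$ the Fourier coefficients $W=\hat{\tilde\omega}$, $\Theta=\hat{\tilde\theta}$ solve
\begin{align*}
  \dot W = i\xi\,\Theta, \qquad \dot\Theta = \frac{\alpha\,i\xi}{\lambda(t)^2}\,W,
\end{align*}
which I collapse into $\ddot W+\nu(t)^2 W=0$ with $\nu(t)^2=\alpha\xi^2/\lambda(t)^2$. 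Rescaling time by $s=\kappa/\xi=\eta/\xi-t$ turns this into the universal, frequency–independent problem $W_{ss}+\alpha(1+s^2)^{-1}W=0$.

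The role of the hypothesis $\alpha>\tfrac14$ is now transparent. As $s\to\infty$ the potential behaves like $\alpha/s^2$, so the indicial exponents are $\tfrac12\pm\sqrt{\tfrac14-\alpha}=\tfrac12\pm i\sqrt{\alpha-\tfrac14}$; for $\alpha>\tfrac14$ they are complex conjugate and every solution behaves like $|s|^{1/2}$ times a bounded oscillation $\cos/\sin\!\big(\sqrt{\alpha-\tfrac14}\,\ln|s|\big)$, whereas for $\alpha<\tfrac14$ one solution grows strictly faster than $|s|^{1/2}$. Since $m^{-1/2}\sim|s|^{-1}$ while $|W|^2\sim|s|$, the borderline growth $|W|\lesssim|s|^{1/2}$ is exactly what prevents the weighted energy from growing, giving the upper bound, while the conserved Wronskian $\mathrm{Im}(\bar W\dot W)=\xi\,\mathrm{Re}(\bar W\Theta)$ keeps $|W|$ and $|\Theta|$ from vanishing simultaneously, which feeds the lower bound.

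To make this quantitative and uniform in $(\xi,\eta)$ I would solve the reduced equation explicitly: the substitution $u=\tfrac12(1+is)$ (equivalently $x=is$) turns $W_{ss}+\alpha(1+s^2)^{-1}W=0$ into a hypergeometric equation, so the two independent solutions are $(1+s^2)^{1/4}$ times $\hyp$-type functions (associated Legendre / conical functions) with parameters built from $\tfrac12\pm i\sqrt{\alpha-\tfrac14}$. The two–sided bound then follows from the classical connection formula relating the behaviour as $s\to+\infty$ and $s\to-\infty$, that is $t\to-\infty$ and $t\to+\infty$: the connection coefficients are ratios of $\Gamma$–functions evaluated at $\tfrac12\pm i\sqrt{\alpha-\tfrac14}$, which are finite and non–zero precisely because $\alpha>\tfrac14$, and depend only on $\alpha$. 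Reassembling the modes by Plancherel yields the claimed bound with a constant depending only on $\alpha$, and, since the estimate is uniform in the Fourier variables, simultaneously in every Sobolev space.

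I expect the main obstacle to be the \emph{lower} bound, uniformly in time and frequency. A naive energy such as $H=|\dot W|^2+\nu^2|W|^2$ satisfies $\dot H=(\nu^2)'|W|^2$ and is only monotone on each side of the critical time $s=0$, and the Grönwall factor $\int|\dot\nu/\nu|\,dt$ diverges logarithmically, so no elementary energy estimate produces a two–sided bound that is uniform in $(\xi,\eta)$. The oscillatory cancellation responsible for coercivity is genuinely encoded in the connection coefficients of the hypergeometric representation, and the point of the condition $\alpha>\tfrac14$ is exactly that these coefficients stay bounded away from $0$ and $\infty$; controlling the transition region $\kappa\approx0$, where the adiabatic (WKB) approximation breaks down, is the technical heart of the argument.
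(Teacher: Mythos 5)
Your proposal is correct in its essentials, but it is a genuinely different proof from the paper's. You collapse the linearized system to the scalar equation $W_{ss}+\alpha(1+s^2)^{-1}W=0$, whose universality (the frequency enters only through the starting point $s_0$) makes uniformity transparent, and you control its fundamental system through explicit hypergeometric/conical-function representations and their connection formulas; this is essentially the route of the explicit computations in \cite{zillinger2021echo}, which the paper itself points to as an alternative. The paper's own proof (Proposition \ref{proposition:hom}) never leaves the first-order $2\times 2$ system: in the unknowns \eqref{eq:goodunknowns} the off-diagonal part of the generator is anti-symmetric, the symmetric diagonal part is cancelled \emph{exactly} by adding the cross term $\langle Z,\tfrac{1}{2\sqrt{\alpha}}L_t(\p_x\Delta_t^{-1/2})^{-1}Q\rangle$ to $\|Z\|^2+\|Q\|^2$, and the only surviving term involves $\dt$ of the symbol $(\xi-kt)(k^2+(\xi-kt)^2)^{-1/2}$, whose absolute value integrates in time to $2$, uniformly in $(k,\xi)$; the hypothesis $\alpha>\tfrac14$ enters solely to make the modified energy positive definite. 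In particular, your closing assertion that ``no elementary energy estimate produces a two-sided bound that is uniform in $(\xi,\eta)$'' is refuted by the paper itself: the obstruction you describe (the logarithmically divergent Gronwall factor) applies to the naive energy $|\dot W|^2+\nu^2|W|^2$, but not to the energy with the compensating cross term. What your approach buys is sharper information — exact asymptotics, phases, in principle optimal constants; what the paper's buys is robustness: the same energy scheme is reused verbatim for the perturbed systems around traveling waves (Propositions \ref{proposition:mainres} and \ref{proposition:mainresfull}), where explicit solutions are unavailable. The paper makes this trade-off explicit after Lemma \ref{lemma:odemodel}, where it notes that its model ODE is solvable by Legendre functions but prefers the ``more robust energy-based proof''.

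Two corrections to side remarks. First, the Wronskian argument for the lower bound is not valid as stated: $\mathrm{Im}(\bar W\dot W)$ is conserved, but it vanishes identically on the invariant class of data with $W$ real and $\Theta$ purely imaginary, so it cannot by itself prevent the weighted energy from degenerating. The clean observation is that the system for the weighted pair $(m^{-1/4}W,\,m^{1/4}W_s)$ is trace free, so by Liouville's formula its fundamental matrix $N$ has constant determinant; since for a $2\times 2$ matrix $\|N^{-1}\|=\|N\|/|\det N|$, the uniform lower bound is an automatic consequence of the uniform upper bound, and no connection coefficients are needed for it. Second, the connection coefficients are ratios of Gamma values that are finite and nonzero for \emph{every} $\alpha>0$, not ``precisely because $\alpha>\tfrac14$''; the role of $\alpha>\tfrac14$ is the one you correctly identified earlier, namely that both indicial exponents $\tfrac12\pm i\sqrt{\alpha-\tfrac14}$ have real part exactly $\tfrac12$, so that both fundamental solutions exhibit only the borderline $|s|^{1/2}$ growth compensated by the weights. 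Finally, note that what your universal-ODE analysis actually bounds is the quadratic form in $(m^{-1/4}W, m^{1/4}W_s)$, and $W_s$ carries a factor of the $x$-frequency relative to $\Theta$; this is the form built from the unknowns \eqref{eq:goodunknowns} (which contain $\p_x\theta$), and it is this version — also the one proved by Proposition \ref{proposition:hom} — that is uniformly comparable in time, rather than the weight written with $\theta$ alone.
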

As we discuss in Section \ref{sec:homogeneous} the choice of unknowns moving
with the underlying shear flow
\begin{align}
  \label{eq:goodunknowns}
  \begin{split}
  Z(t,x,y)&:= \sqrt{\alpha} \left((\p_x^{-2}\Delta)^{-1/4}\omega\right)(t,x-ty,y), \\
  Q(t,x,y)&:= \left((\p_x^{-2}\Delta)^{1/4}\p_x\theta\right)(t,x-ty,y)
  \end{split}
\end{align}
is natural. These unknowns have previously been used in \cite{bedrossian21} and we use
the same notation. We remark that in the (partially) viscous setting
other choices of unknowns are natural \cite{adhikari2022stability,adhikari20102d,lai2021optimal,tao2020stability,tao20192d,doering2018long,cao2013global,zhai2022stability}.

We further observe that the linearized problem around the stationary solution
\eqref{eq:groundstate} in terms of $(Z,Q)$ reads
\begin{align*}
  \dt
  \begin{pmatrix}
    Z \\ Q
  \end{pmatrix}
  &=
  \begin{pmatrix}
    \frac{1}{2}\frac{\p_x(\p_y-t\p_x)}{\p_x^2+(\p_y-t\p_x)^2} & \sqrt{\alpha} \p_x (\p_x^2+(\p_y-t\p_x)^2)^{-1/2} \\
    -\sqrt{\alpha} \p_x (\p_x^2+(\p_y-t\p_x)^2)^{-1/2} & - \frac{1}{2}\frac{\p_x(\p_y-t\p_x)}{\p_x^2+(\p_y-t\p_x)^2}
  \end{pmatrix}
  \begin{pmatrix}
    Z \\Q
  \end{pmatrix}, \\
  &=: A
  \begin{pmatrix}
    Z \\Q
  \end{pmatrix}
\end{align*}
Since the operator on the right-hand-side is a (time-dependent) constant coefficient Fourier multiplier the
evolution of $(Z,Q)$ decouples in Fourier space with
respect to both $x$ and $y$. Therefore all stability estimates hold
frequency-wise and hence extend to arbitrary Sobolev, Besov or Gevrey spaces.
However, this stability can be understood as an artifact of the fact that the
stationary solution \eqref{eq:groundstate} is independent of $x$ and that
perturbations therefore decouple in frequency and cannot propagate along chains
of resonances.
For this reason, in order to capture instabilities of the nonlinear problem,
instead of a stationary solution we consider nearby $x$-dependent explicit solutions.

\begin{lemma}[compare Proposition 2.1 in \cite{zillinger2021echo} and \cite{bedrossian21}]
\label{lem:Etilde}
  Let $\alpha\geq 0$ be given, then there exist non-trivial functions $f(t)$ and $g(t)$ such that 
  \begin{align}
    \begin{split}
    \omega(t,x,y)&= -1 + f(t)\cos(x-ty), \\
    \theta(t,x,y)&= \alpha y + g(t) \sin(x-ty), \\
    v(t,x,y)&= (y,0) + \frac{1}{1+t^2}\nabla^{\perp}\cos(x-ty), 
  \end{split}
  \end{align}
  are a solution of the nonlinear inviscid Boussinesq equations for all times.
  We call these solutions \emph{traveling waves}. Moreover, if $\alpha>\frac{1}{4}$ it holds that 
  \begin{align*}
  E(t):=\frac{ |\alpha|}{\sqrt{1+t^2}}|f(t)|^2+ \sqrt{1+t^2}|g(t)|^2   
  \end{align*} 
  satisfies 
  \begin{align*}
   c E(0) \leq E(t)\leq C E(0)
  \end{align*}
  for some constants $0<c<C<\infty$ depending on $\alpha$.
\end{lemma}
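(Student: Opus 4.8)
The plan is to first reduce the problem to verifying an exact solution of the vorticity--temperature formulation, and then to extract and analyze a scalar second order ODE. Writing $\omega=\p_x v_2-\p_y v_1$ and taking the curl of the momentum equation turns the system into $\dt\omega+v\cdot\nabla\omega=\p_x\theta$ together with $\dt\theta+v\cdot\nabla\theta=0$. Substituting the ansatz, with $v$ reconstructed from $\omega$ by the (time dependent) Biot--Savart law and using the identity $\Delta\cos(x-ty)=-(1+t^2)\cos(x-ty)$ (which is the source of the $\tfrac{1}{1+t^2}$ prefactor), I would compute $\dt\omega+v\cdot\nabla\omega$ and $\dt\theta+v\cdot\nabla\theta$ directly. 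The decisive structural observation is that the quadratic self-interaction terms (the $f^2$ contribution in the vorticity equation and the $fg$ contribution in the temperature equation) cancel pointwise, because the perturbation is a single mode traveling with the shear. What remains are the two scalar identities $f'=g$ and $g'=-\tfrac{\alpha}{1+t^2}f$, so that any nontrivial solution of this linear system produces a nontrivial traveling wave; existence is then immediate from ODE theory.

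For the energy estimate I would eliminate $g=f'$ to obtain the single equation
\[
  f''+\frac{\alpha}{1+t^2}f=0,
\]
which is precisely of Taylor--Goldstein type, with $\alpha$ playing the role of the Richardson number; this is why the threshold $\alpha>\tfrac14$ (the Miles--Howard criterion) enters. A direct Gr\"onwall estimate on $E$ is tempting but fails: one computes $E'=\tfrac{t}{1+t^2}(E_1-E_2)$, where $E_1=\sqrt{1+t^2}|g|^2$ and $E_2=\tfrac{\alpha}{\sqrt{1+t^2}}|f|^2$ are the two summands, and the crude bound $|E_1-E_2|\le E$ only yields $E(t)\lesssim\sqrt{1+t^2}\,E(0)$, which grows. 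This growth is spurious and must be removed by exploiting the oscillation. To this end I would pass to the arclength-type variable $\tau=\operatorname{arcsinh}t$, so that $\sqrt{1+t^2}=\cosh\tau$ and the equation becomes $\ddot f-\tanh\tau\,\dot f+\alpha f=0$, and then remove the first order term by setting $f=\sqrt{\cosh\tau}\,h$. A short computation gives
\[
  \ddot h+Q(\tau)h=0,\qquad Q(\tau)=\alpha+\tfrac12-\tfrac34\tanh^2\tau,
\]
and here the point is that $Q(\tau)\ge\alpha-\tfrac14>0$ for all $\tau$, so $h$ genuinely oscillates rather than growing exponentially.

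The remaining step is an adiabatic-invariant argument. For the oscillator energy $\mathcal E:=\dot h^2+Q h^2$ one has $\dot{\mathcal E}=\dot Q\,h^2$, hence $|\dot{\mathcal E}|\le\tfrac{|\dot Q|}{Q}\mathcal E$. Since $\dot Q=-\tfrac32\tanh\tau\,\operatorname{sech}^2\tau$ decays exponentially while $Q$ stays bounded below by $\alpha-\tfrac14>0$, the factor $|\dot Q|/Q$ is integrable on $[0,\infty)$, and Gr\"onwall in both directions yields $c\,\mathcal E(0)\le\mathcal E(\tau)\le C\,\mathcal E(0)$. Finally I would rewrite the target energy in the new variables as the quadratic form $E=\dot h^2+\tanh\tau\,h\dot h+(\alpha+\tfrac14\tanh^2\tau)h^2$, whose associated symmetric matrix has determinant exactly $\alpha$ and trace between $1+\alpha$ and $1+\alpha+\tfrac14$; together with $\alpha>\tfrac14$ this shows that $E$ and $\mathcal E$ are positive definite quadratic forms in $(h,\dot h)$ with eigenvalues bounded above and below uniformly in $\tau$, hence comparable. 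Chaining $E(t)\asymp\mathcal E(\tau)\asymp\mathcal E(0)\asymp E(0)$ then gives the claim.

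I expect the main obstacle to be conceptual rather than computational: recognizing that the naive monotonicity/Gr\"onwall approach overestimates the growth and that one must instead expose an adiabatic invariant of an auxiliary oscillator whose positive definiteness relies precisely on the Miles--Howard condition $\alpha>\tfrac14$. Verifying the exact cancellation of the quadratic terms in the first part is routine but must be done carefully, since it is what makes the ansatz an exact, not merely approximate, solution.
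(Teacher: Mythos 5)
Your proposal is correct, and the existence part coincides with the paper's (insert the ansatz, observe that for a single mode traveling with the shear the quadratic terms vanish pointwise, and reduce to an ODE for $(f,g)$). For the energy bound, however, you take a genuinely different route. The paper does not analyze the scalar ODE directly: it invokes the explicit representation of $f,g$ by hypergeometric functions from \cite{zillinger2021echo}, and alternatively obtains the two-sided bound on $E(t)$ as a special case of the energy estimate of Lemma \ref{lem:homogeneous} and Proposition \ref{proposition:hom}, since the wave solves the homogeneous linearized problem at the single frequency $(k,\xi)=(1,0)$. There the energy $\|Z\|^2+\|Q\|^2$ is augmented by the cross term $\langle Z,\tfrac{1}{2\sqrt{\alpha}}L_t(\p_x\Delta_t^{-1/2})^{-1}Q\rangle$, which is positive definite precisely when $\alpha>\tfrac14$, and Gronwall applies because the time derivative of the multiplier has integrable symbol. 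Your argument instead eliminates $g$ to get $f''+\frac{\alpha}{1+t^2}f=0$, applies the Liouville transformation $\tau=\operatorname{arcsinh} t$, $f=\sqrt{\cosh\tau}\,h$ to reach $\ddot h+Q(\tau)h=0$ with $Q=\alpha+\tfrac12-\tfrac34\tanh^2\tau\ge\alpha-\tfrac14>0$, and uses the adiabatic invariant $\mathcal E=\dot h^2+Qh^2$ with $|\dot{\mathcal E}|\le(|\dot Q|/Q)\mathcal E$ and $|\dot Q|/Q$ integrable; your final quadratic-form comparison (determinant exactly $\alpha$, uniformly bounded trace) is correct, as is your diagnosis that naive Gronwall on $E$ only gives $\sqrt{1+t^2}$ growth. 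Both proofs share the same skeleton --- a modified positive definite energy, approximately conserved because the correction is integrable in time, with $\alpha>\tfrac14$ entering as positive definiteness --- but yours is self-contained and elementary (no hypergeometric functions, no symmetrized $(Z,Q)$ system) and makes the Miles--Howard threshold transparent as positivity of an oscillator potential, while the paper's version works frequency-by-frequency for arbitrary $(k,\xi)$, which is exactly what the perturbative analysis in the rest of the paper requires.

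One small point worth recording: the velocity as stated in the lemma, $\frac{1}{1+t^2}\nabla^{\perp}\cos(x-ty)$, is not consistent under Biot--Savart with $\omega=-1+f(t)\cos(x-ty)$ (it should carry a factor $-f(t)$). Your reading, reconstructing $v$ from $\omega$ via the time-dependent Biot--Savart law, is the consistent one and is what makes your system $f'=g$, $g'=-\frac{\alpha}{1+t^2}f$ come out correctly; without it one does not even obtain a coupled system.
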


We remark that in terms of the unknowns \eqref{eq:goodunknowns} these traveling waves read
\begin{align*}
  Z(t,x,y)&= \frac{f(t)}{(1+t^2)^{1/4}} \cos(x), \\
  Q(t,x,y)&= g(t)(1+t^2)^{1/4} \sin(x).
\end{align*}
They are global in time, low-frequency solutions and remain uniformly bounded in
any suitable Sobolev or Gevrey space for all times.

The functions $f(t)$ and $g(t)$ can be computed explicitly in terms of
hypergeometric functions and the above results can hence be obtained by
explicit computation (as was done in \cite{zillinger2021echo}).
Furthermore, as shown in Lemma \ref{lem:homogeneous} the stability can also be
obtained as a special case of an energy estimate similar to the one of \cite{bedrossian21}.

In the following we will consider a simplified version of the linearization of the Boussinesq equations in terms of $(Z,Q)$ 
around these traveling waves and establish upper bounds on the possible norm inflation.
The reduction of the non-simplified linearized Boussinesq equations is discussed in Section \ref{sec:technical}.

We note that for the traveling waves of Lemma \ref{lem:Etilde} for large times the
vorticity grows as
\begin{align*}
  \|\omega(t)\|_{L^{\infty}} \approx f(0) \sqrt{t}.
\end{align*}
Therefore if the traveling wave is initially of size $\epsilon>0$ it will remain
a small perturbation of the stationary state \eqref{eq:groundstate} only on time
scales
\begin{align*}
  t < \delta^2 \epsilon^{-2},
\end{align*}
where $0<\delta<0.1$ is a constant.
Hence, similarly as in \cite{bedrossian21} we restrict to studying stability and
norm inflation on that time interval.

\begin{theorem}[Stability and upper bounds on norm inflation]
\label{theorem:main}
  Let $0<\delta<0.1$ and $0<\epsilon< 0.1$ be given and
  consider the simplified linearized
  Boussinesq equations around the traveling waves
  \begin{align*}
  Z&= \frac{f(t)}{(1+t^2)^{1/4}} \cos(x), \\
  Q&= g(t)(1+t^2)^{1/4} \sin(x),
  \end{align*}
  with $f(0)=g(0)=\epsilon$.
  That is, consider the linear problem
 \begin{align*}
  \dt
  \begin{pmatrix}
    Z \\ Q
  \end{pmatrix}
  + A
  \begin{pmatrix}
    Z \\ Q
  \end{pmatrix}
  &= - 
  \begin{pmatrix}
   |\p_x|^{1/2} \Delta_t^{-1/4}(\nabla^{\perp}|\p_x|^{-1/2}\Delta_t^{-3/4} Z \cdot \nabla f(t)\cos(x))\\ 
   |\p_x|^{-1/2}\Delta_t^{1/4}(\nabla^{\perp}|\p_x|^{-1/2}\Delta_t^{-3/4} Z \cdot \nabla g(t)\cos(x))
 \end{pmatrix},\\
   \Delta_t&=\p_x^2+(\p_y-t\p_x)^2.
 \end{align*}
 Then there exists $C>0$ and $|\gamma|<\delta$ such that for any initial data
 $(Z_0,Q_0)$ whose Fourier transform satisfies
 \begin{align*}
   \sum_{k} \int \exp\left(2C \min ((\epsilon |\xi|^{1+\gamma})^{2/3-2\gamma}, \epsilon^{-2})\right) |\mathcal{F}(Z_0,Q_0)(k,\xi)|^2 d\xi \leq 1
 \end{align*}
 the corresponding solution remains regular up to a loss in the constant $C$.
 That is, for all times $t>0$ it holds that
 \begin{align}
   \label{eq:bound}
   \sum_{k} \int \exp\left(C \min((\epsilon |\xi|^{1+\gamma})^{2/3-2\gamma}, \epsilon^{-2})\right)(1+\frac{\epsilon^{-2}}{|\xi|}) |\mathcal{F}(Z,Q)(t,k,\xi)|^2 d\xi \leq 1.
  \end{align}
  Here $k \in \Z$ denotes the frequency with respect to $x$ and $\xi \in \R$
  denotes the frequency with respect to $y$.

  Moreover, there exists a constant $c=c(\alpha)$ such that if the Fourier
  transform of the initial data is supported in the region  $|\xi|\geq c
  \epsilon^{-4}$ then the stability estimate improves to a uniform estimate
  \begin{align}
    \label{eq:uniform}
    \|(Z,Q)(t)\|_{L^2} \leq 2  \|(Z_0,Q_0)(t)\|_{L^2}.
  \end{align}
\end{theorem}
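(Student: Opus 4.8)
The plan is to work entirely on the Fourier side. Since the forcing involves only $\cos(x)$ and $\sin(x)$ (the traveling wave lives at $x$-frequency $\pm 1$) and $f,g$ depend on $t$ alone, the Fourier transform in $(x,y)\mapsto(k,\xi)$ turns the system, for each fixed $\xi\in\R$, into a coupled ODE system in $t$ for the sequence $(\hat Z(t,k,\xi),\hat Q(t,k,\xi))_{k\in\Z}$ in which the forcing couples $k$ only to its neighbours $k\pm1$, while the transversal frequency $\xi$ is conserved. The operator $A$ is diagonal in $k$: writing $\lambda=\lambda(t,k,\xi)=\sqrt{k^2+(\xi-tk)^2}$, its off-diagonal $(Z,Q)$-block $\pm\sqrt{\alpha}\,k/\lambda$ is skew (a rotation, energy-conserving), and its real diagonal entries $\pm\frac12 k(\xi-tk)/\lambda^2$ are exactly the logarithmic derivatives which Lemma~\ref{lem:homogeneous} shows combine into an $A$-flow under which $\|(\hat Z,\hat Q)\|$ stays comparable to its initial value, with constants depending only on $\alpha$. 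Hence all genuine norm inflation is produced by the neighbour coupling, which is resonant precisely at the critical times $t_k=\xi/k$ where $\lambda(t_k,k,\xi)=|k|$ is minimal. This is the echo chain: as $t$ runs over $[1,\xi]$ the resonances are traversed in the order $k=\xi,\xi-1,\dots,1$.

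To prove \eqref{eq:bound} I would run a Fourier-multiplier energy estimate. The aim is a time-dependent weight $w(t,k,\xi)$, decreasing in $t$, with $w(0,k,\xi)=\exp(2C\min((\epsilon|\xi|^{1+\gamma})^{2/3-2\gamma},\epsilon^{-2}))$ and $w(t,k,\xi)\ge\exp(C\min(\cdots))(1+\epsilon^{-2}/|\xi|)$ for all $t$, chosen so that $\mathcal E_w(t)=\sum_k\int w(t,k,\xi)(|\hat Z|^2+|\hat Q|^2)\,d\xi$ is non-increasing; monotonicity then turns the hypothesis $\mathcal E_w(0)\le1$ directly into \eqref{eq:bound}. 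Differentiating $\mathcal E_w$, the $A$-term is harmless by Lemma~\ref{lem:homogeneous}, whereas the coupling term is large and positive only in a window of width $\sim1$ around each $t_k$; one designs $w$ to drop across that window by the reciprocal of the per-resonance amplification, so that $\dot w$ dominates the coupling pointwise in $t$. The size of each drop is governed by the resonance strength — essentially $\epsilon$ times the traveling-wave amplitude $f(t_k)\sim\epsilon(1+t_k^2)^{1/4}$ times the smoothing multipliers $\Delta_t^{\pm s}$ — and the product of all drops along the chain up to the cutoff where this strength saturates produces the exponent $2/3$, with the small parameter $\gamma$ inserted to absorb the overlap of neighbouring resonances (which cease to be separated once $|k|\gtrsim\sqrt{|\xi|}$) and to leave enough slack that the extra polynomial prefactor $(1+\epsilon^{-2}/|\xi|)$, itself the smoothing gain earned at each passage, is accommodated. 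The $\min$ with $\epsilon^{-2}$ reflects that for $|\xi|\ge\epsilon^{-4}$ the amplification the wave can drive over its lifetime is capped.

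For the uniform bound \eqref{eq:uniform} the point is that restricting to $|\xi|\ge c\epsilon^{-4}$ pushes every resonance $t_k=\xi/k$ (for $k$ not too large) to a time at which the traveling-wave coupling is so strong that passage through the resonance is adiabatic: in the two-mode Landau--Zener picture the ratio of $(\text{coupling})^2$ to the crossing rate is $\gtrsim1$, so the crossing produces essentially no net transfer between modes and the amplitudes are merely rotated. I would make this rigorous by the same energy method but with the trivial multiplier $w\equiv1$, combining the $A$-flow bound of Lemma~\ref{lem:homogeneous} with a direct estimate showing that, on $\{|\xi|\ge c\epsilon^{-4}\}$, the time-integrated contribution of the coupling to $\frac{d}{dt}\|(\hat Z,\hat Q)\|^2$ is at most $\log 2$, whence Gr\"onwall yields \eqref{eq:uniform}; here one exploits the exact skew structure of the leading coupling so that only the smaller, smoothing off-diagonal corrections survive the energy pairing.

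The main obstacle is the construction and verification of the multiplier $w$ in the second step: one must track the cumulative effect of $\sim|\xi|$ successive resonances, control the regime in which they overlap, and verify that $\dot w+(\text{coupling contribution})\le0$ holds pointwise in $t$ with the precise exponent $2/3-2\gamma$. This is where the competition between the growing amplitude $f(t)$, the smoothing $\Delta_t^{\pm s}$ factors, and the detuning must be balanced, and where the admissible choice of $\gamma$ is forced. By comparison the uniform-bound estimate, while still requiring care with the adiabatic estimate near each crossing, is structurally simpler, since no cancellation across different resonances has to be engineered.
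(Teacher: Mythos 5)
Your argument for the uniform bound \eqref{eq:uniform} has a genuine gap. With the trivial weight $w\equiv 1$, Gronwall requires smallness of the time integral of the \emph{instantaneous} coupling norm, $\int_0^{\delta\epsilon^{-2}}\sup_l(|c_l^{\pm}|+|d_l^{\pm}|)\,dt$. This quantity is \emph{not} small: at each time $t\in I_k$ some mode is resonant, contributing $\int_{I_k}|c_{k-1}^{+}|\approx \epsilon\xi k^{-3/2}$, and summing over the resonances accessible in the window, $k\gtrsim \xi\epsilon^{2}$, gives $\sum_k \epsilon\xi k^{-3/2}\approx\sqrt{\delta\xi}\gg 1$, so Gronwall only yields $\exp(C\sqrt{\delta\xi})$. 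The smallness that actually holds for $\xi\geq c\epsilon^{-4}$ is \emph{per fixed mode}, $\sup_l\int|c_l^{\pm}|\,dt\leq\tfrac14 C_\alpha^{-1}$, and converting this into a uniform bound is exactly why the paper's Proposition \ref{proposition:high2} works with the Duhamel integral equation and an absorption argument for $\sup_{\tau\le t}|(Z_l,Q_l)(\tau)|$, rather than with a differential energy inequality. Your two supporting heuristics are also unavailable: (i) strong coupling at a crossing does not mean adiabatic passage with no transfer --- here the net transfer \emph{is} essentially the time integral of the coupling, so strong coupling means large transfer; the true reason for \eqref{eq:uniform} is that for $t\lesssim\delta\epsilon^{-2}$ only resonances with $k\gtrsim\xi\epsilon^2$ are reached, each of small amplitude $\lesssim c^{-1/2}$; (ii) there is no skew cancellation in the $L^2$ pairing for this coupling: by \eqref{eq:coeff1}, near $t\approx\xi/k$ the two coefficients pairing $Z_{k-1}$ with $Z_{k}$ differ in size by a factor $\approx\xi/k^2$, because the conjugation by $\Delta_t^{-1/4},\Delta_t^{-3/4}$ destroys the transport antisymmetry; only the \emph{omitted} term $\tfrac{f(t)}{1+t^2}\cos(x)\p_y$ (treated in Lemma \ref{lemma:smalltimecoeff}) is skew.

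For the main bound \eqref{eq:bound}, your monotone-weighted-energy program is the route of \cite{bedrossian21}, whereas the paper argues interval by interval: a contraction argument for small times (Proposition \ref{proposition:smalltime}), a two-mode comparison ODE (Lemma \ref{lemma:odemodel}) feeding an energy estimate on each resonant interval (Proposition \ref{proposition:mainres}), algebraic bounds for $t>2\xi$ (Proposition \ref{proposition:longtime}), and then multiplication of the per-interval factors (Corollary \ref{cor:iterated}, Lemma \ref{lemma:Stirling}). A weight scheme can in principle work, but your specification is inconsistent at the decisive point: if $\dot w$ must dominate the coupling \emph{pointwise in $t$} and the weights of neighbouring modes are comparable, then negativity of the relevant $2\times 2$ form forces $\log(w_{\mathrm{before}}/w_{\mathrm{after}})\gtrsim\int_{I_k}|c_{k-1}^{+}|\approx\epsilon\xi k^{-3/2}=:A_k$, i.e.\ a drop by $e^{A_k}$ rather than by the reciprocal of the amplification $A_k$; iterated over $k$ this produces $\exp(c\epsilon\xi)$ (Gevrey $1$) and destroys the exponent $2/3$. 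To get away with a drop of size $A_k$ one needs weights genuinely imbalanced in $k$, with the ratio $w_{k-1}/w_k$ varying across the resonance in a profile tuned to the coupling, exploiting that the back-coupling is weak and that the pointwise product of strong and weak couplings is integrable; this two-speed structure is the key idea (encapsulated in the paper by Lemma \ref{lemma:odemodel}) and is the very thing your sketch leaves open. Finally, your scheme cannot close on the tail $t>2\xi$: there the coefficients $c_{\pm1}^{\mp}\sim f(t)/\sqrt{\xi t}$ are not integrable, the modes $k=\pm1$ genuinely grow algebraically (Proposition \ref{proposition:longtime}), and a non-increasing weighted energy with weight bounded below cannot accommodate this; this regime, which is the actual origin of the algebraic prefactor $(1+\epsilon^{-2}/|\xi|)$ rather than a ``smoothing gain earned at each passage,'' is absent from your proposal.
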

Let us comment on these results:
\begin{itemize}
\item Since the traveling waves are independent of $y$, these equations decouple
  with respect to the Fourier frequency $\xi \in \R$ corresponding to $y$.
  We may hence interpret \eqref{eq:bound} as an \emph{upper bound} on the
  possible \emph{norm inflation} factor for frequency-localized initial data by
  \begin{align*}
    \exp\left(C \min ((\epsilon |\xi|^{1+\gamma})^{2/3-2\gamma}), \epsilon^{-2}\right)(1+\frac{\epsilon^{-2}}{|\xi|}).
  \end{align*}
  In particular, we emphasize that this multiplier strongly differs from the
  Euler case.
  As we discuss in Section \ref{sec:toy} we expect that this
  bound is optimal in the sense that this norm inflation is attained (possibly with
  slightly smaller constant $C$) for all frequencies $\epsilon^{-1}\leq |\xi|\leq \epsilon^{-4}$.
  However, since estimates in certain time regimes are technically very involved
  (in particular for the non-simplifed problem) in this article we only establish upper bounds.
\item A corresponding nonlinear result has been established in
  \cite{bedrossian21} using different methods with an upper bound on the norm inflation by $\exp(C
  \xi^{\sigma})$ with $\sigma>\frac{1}{2}$.
  The present result recovers this
  bound with $\sigma=\frac{1}{2}$ for $\xi=\epsilon^{-4}$ in the linearized
  problem around traveling waves.
  As major novelties, in this article we prove that for the present model:
  \begin{itemize}
  \item The upper bound on the norm inflation factor is different and, in
    particular, much smaller when $|\xi|$ is much smaller than $ \epsilon^{-4}$. 
  \item For large frequencies $|\xi|\geq c \epsilon^{-4}$ the norm inflation is
    bounded by a constant factor instead (see Proposition
    \ref{proposition:high2} for a more detailed statement).
  \end{itemize}
  Compared to the estimates of \cite{bedrossian21} we further exploit that the
  underlying traveling wave is much smaller for small times and that the time
  cut-off imposes an upper bound on the frequencies of resonances.
\item In our simplified equations we omit the term
  \begin{align*}
    \frac{1}{1+t^2}
  \begin{pmatrix}
    f(t)\Delta_t^{-1/4}(\cos(x)\p_y \Delta_t^{1/4}Z)\\
    f(t)\Delta_t^{1/4}(\cos(x)\p_y \Delta_t^{-1/4}Q)
  \end{pmatrix}
  \end{align*}
  from the linearized Boussinesq equations.
  As we discuss in Section \ref{sec:technical} in the main time regime $t>\xi^{2/3}\epsilon^{-1/3}$, where the resonance mechanism takes place, this simplification can be removed.
  For the regime of small times, for the non-simplified problem we instead obtain
  a rough growth bound by $\exp(c\sqrt{\xi})$. However, we expect that this
  bound can be improved to a uniform bound (as for the
  simplified model) with more technical effort.
\end{itemize}

The remainder of the article is structured as follows:
\begin{itemize}
\item In Section \ref{sec:hom} we discuss the linearized problem around a ground
  state \eqref{eq:groundstate} as formulated in Lemma \ref{lem:homogeneous}. In
  particular, we introduce the unknowns and system formulation used throughout
  the article.
\item In Section \ref{sec:toy} we discuss the underlying resonance mechanism for
  a toy model. In particular, this allows us to clearly present the norm
  inflation mechanism  and compare it with the Euler equations
  or the partially viscous problem. 
\item Based on the insights derived from this model we show in Sections
  \ref{sec:longtime} and \ref{sec:small} that norm inflation cannot happen
  outside a specific time interval depending on the size of $\xi$ and
  $\epsilon$. 
\item The main result of the paper is established in Section \ref{sec:echo}, where we establish bounds on the norm
  inflation achieved.
\item Finally, in Section \ref{sec:technical} we show that the previously
  derived norm inflation estimates also extend to the non-simplified model. In
  particular, the omitted terms are only non-negligible perturbations for small
  times, where they can be absorbed by a loss of Gevrey regularity. As we
  discuss, we do not expect this loss to be attained. However, since the main
  focus of this article lies in the resonance mechanism for large times, we do
  not pursue this further.
\end{itemize}

\subsection*{Notation}
\label{sec:trivial}
In this section we collect some notation used throughout the article for easier
reference.

Our main object of interest are the (simplified) linearized Boussinesq equations around the traveling waves
of Lemma \ref{lem:Etilde} which we write in the form
\begin{align*}
   \dt
   \begin{pmatrix}
     Z \\Q
   \end{pmatrix}
   + A
   \begin{pmatrix}
     Z \\Q
   \end{pmatrix}=
   R[f(t),g(t),Z,Q],
\end{align*}
where
\begin{align*}
  f(t) &\leq C \epsilon \sqrt{1+|t|}, \\
  g(t) &\leq C \epsilon (1+|t|)^{-1/2}
\end{align*}
are the coefficients of the traveling waves of Lemma \ref{lem:Etilde}.
These equations decouple after a Fourier transform in $y$. Hence we view these
equations as equations for $(\mathcal{F}_yZ)(t,x,\xi),
(\mathcal{F}_{y}Q)(t,x,\xi)$ for any fixed frequency $\xi \in \R$ and with
slight abuse of notation write $Z(t,x), Q(t,x)$ again.

These equations may equivalently be expressed as coupled system for Fourier modes $Z_{k}, Q_k$ as
stated in Definition \ref{defi:system}:
\begin{align*}
  \begin{split}
  \begin{pmatrix}
    Z_k \\ Q_k
  \end{pmatrix}(t_2)
  & = S_k(t_2,t_1)
    \begin{pmatrix}
      Z_k \\ Q_k
    \end{pmatrix}(t_1)
  \\
  & \quad  + \int_{t_1}^{t_2} S_k(t_2, t)
    \begin{pmatrix}
      c_k^{+} Z_{k+1} + c_{k}^{-} Z_{k-1} \\
      d_k^{+} Z_{k+1} + d_{k}^{-} Z_{k-1}
    \end{pmatrix} dt,
  \end{split}
\end{align*}
with the coefficient functions stated in \eqref{eq:coeff1} (for $k\pm 1\neq 0$):
\begin{align*}
  \begin{split}
  c_{k}^{\pm} &= \pm\frac{1}{2} f(t)\xi (1+(\xi/k-t)^2)^{-1/4} (1+(\xi/(k\pm 1)-t)^2)^{-3/4},\\
  d_{k}^{\pm} &= \pm \frac{1}{2} g(t)\xi \frac{k}{k\pm 1} (1+(\xi/k-t)^2)^{1/4}  (1+(\xi/(k\pm 1)-t)^2)^{-3/4}.
\end{split}
\end{align*}
Here $c_{k}^{\pm}$ is used as short-hand-notation for $c_{k}^{+}$ or
$c_{k}^{-}$. Similarly, we use $c_{k\pm 1}^{\mp}$ to refer to $c_{k+1}^{-}$ and
$c_{k-1}^{+}$.

As noted after Lemma \ref{lem:homogeneous} for simplicity of notation the
estimates of this article are stated for $L^2(dx dy)$ or $\ell^2(\Z)$ (with
respect to $k$). However, since the above system includes only nearest neighbor
interaction all estimates extend to the case of weighted $\ell^2$ spaces,
provided the weight $\lambda(k)$ is such that $|\lambda(k)/\lambda(k\pm 1)-1|$
is small enough.
In particular, this allows for $\lambda(k)= 1+ c|k|^N$  for any $N \in \N$ and
$\lambda(k)=\exp(c|k|^{s})$ for any $0<s<1$ and hence to establish stability in
Sobolev or Gevrey spaces. 

Throughout this article in several estimates it suffices to control quantities only in terms of upper and lower bounds within a constant factor.
Hence, in order to simplify notation, we sometimes approximate values. For
instance, we write
\begin{align*}
  \frac{1}{k} - \frac{1}{k+1} = \frac{1}{k(k+1)} \approx \frac{1}{k^2}
\end{align*}
to denote that for any $k \in \N, k \neq 0$ the last two terms are comparable
within a factor at most $10$.

\section{The Homogeneous Problem, Waves and Good Unknowns }
\label{sec:hom}

As remarked following Lemma \ref{lem:homogeneous} the explicit solutions of the
Boussinesq equations of the form
\begin{align*}
  \omega(t,x+ty,y) &= -1 + f(t) \cos(x), \\
  \theta(t,x+ty,y) &= \alpha y + g(t)\sin(x),
\end{align*}
may be found by inserting this ansatz into the Boussinesq equations, which reduce
to an ODE for the coefficient functions $f,g$.

In the following we provide a different perspective on these solutions as low
frequency waves.
Thus consider the perturbations
\begin{align*}
  W(t,x,y)&= \omega(t,x+ty,y)+1,\\
  F(t,x,y)&= \theta(t,x+ty,y)- \alpha y,
\end{align*}
in coordinates moving with the affine flow.
Then the full nonlinear Boussinesq equations are given by 
\begin{align}
  \label{eq:Boussinesq}
  \begin{split}
  \dt W + \nabla^{\perp}\Phi \cdot \nabla W &= \p_x F, \\
  \dt F + \nabla^{\perp}\Phi \cdot \nabla F &= -\alpha \p_x\Phi, \\
  (\p_x^2+(\p_y-t\p_x)^2)\Phi &= W,
  \end{split}
\end{align}
where we used the cancellation of $\nabla^\perp \cdot \nabla$.
In particular, the left-hand-side has a very similar structure as the Boussinesq
equations in vorticity formulation except that the equation satisfied by the
stream function perturbation $\Phi$ now is time-dependent.
With respect to these unknowns the traveling waves of Lemma \ref{lem:Etilde}
take the form
\begin{align*}
  W&=f(t)\cos(x),\\
  F&=g(t)\sin(x).
\end{align*}
They are explicit non-trivial solutions of the nonlinear problem, which are
smooth, low frequency and initially are small perturbations of $(0,0)$.

In the following subsection we discuss the linearized equation around $(0,0)$
and introduce the associated (frequency-localized) solution operators. In
particular, we show that the linearized problem around $(0,0)$ (which we call
the homogeneous problem) is stable in arbitrary regularity.
In contrast, as we sketch in Section \ref{sec:toy} for a toy model the norm inflation of the
corresponding non-linear problem is closely linked to the interaction of high
and low frequencies by means of the nonlinearity
\begin{align*}
  \nabla^{\perp}\Phi_{high} \cdot \nabla W_{low}.
\end{align*}
In particular, this mechanism is not present in the linearized problem around
$(0,0)$ but is present in the linearized problem around traveling waves.
The main aim of the remainder of this article is to show that this linearized problem around
such waves indeed captures this norm inflation mechanism and to identify the
sharp regularity classes corresponding to this norm inflation.

\subsection{Stability of the Homogeneous Problem}
\label{sec:homogeneous}

A natural first step towards understanding the nonlinear behavior of initially
small solutions of \eqref{eq:Boussinesq} is to study the linearized problem
\begin{align*}
    \begin{split}
  \dt W  &= \p_x F, \\
  \dt F  &= -\alpha \p_x\Phi, \\
  (\p_x^2+(\p_y-t\p_x)^2)\Phi &= W.
  \end{split}
\end{align*}
Given this form, we symmetrize the problem by introducing the good
unknowns \eqref{eq:goodunknowns} (as in \cite{bedrossian21}):
\begin{align*}
  Z(t,x,y) &= \sqrt{\alpha}((\p_x^{-2}\Delta)^{-1/4}\omega)(t,x+ty,y),\\ 
  Q(t,x,y) &= ((\p_x^{-2}\Delta)^{+1/4}\p_x \theta)(t,x+ty,y),\\
  \Phi(t,x,y)&= (\Delta^{-1}\omega)(t,x+ty,y). 
\end{align*}
We remark that the problem decouples after a Fourier transform in $x$ and hence
in our definition of $Z, Q$ we may choose any power of $\p_x$ instead of
$|\p_x|^{1/2}$. This particular choice is made to simplify calculations for
$\p_x^{-2}\Delta$ and to exploit slightly improved cancellation properties in
Proposition \ref{lemma:smalltimecoeff}.
For the $x$-average we omit the $|\p_x|^{1/2}$ and define
\begin{align*}
  \int Z(t,x,y) dx &= \sqrt{\alpha} |\xi|^{-1/2} \int \omega dx, \\
  \int Q(t,x,y) dx &= |\xi|^{1/2} \int \theta dx.
\end{align*}

The following proposition states the stability result of Lemma
\ref{lem:homogeneous} in terms of these unknowns.

\begin{proposition}
\label{proposition:hom}  
	Let $\alpha>\frac{1}{4}$ and consider the linear system 
	\begin{align*}
		\dt
    \begin{pmatrix}
      Z \\ Q
    \end{pmatrix} +
    \begin{pmatrix}
      -\frac{1}{2} L_t & \sqrt{\alpha}\p_x \Delta^{-1/2}_t\\
      -\sqrt{\alpha}\p_x \Delta^{-1/2}_t& \frac{1}{2} L_t
    \end{pmatrix}
                                          \begin{pmatrix}
                                            Z \\ Q
                                          \end{pmatrix} &=0\\
    \Delta_t&:= \p_x^2+(\p_y-t\p_x)^2\\
    L_t&:= \p_x(\p_y-t\p_x)  \Delta_t^{-1}.
	\end{align*}
	Then the energy 
	\begin{align*}
		E(t)= \|Z\|_{L^2}^2 + \|Q\|_{L^2}^2 + \langle Z, \frac{1}{2\sqrt{\alpha}}L_t (\p_x \Delta^{-1/2})^{-1} Q  \rangle_{L^2}
	\end{align*}
	is approximately constant in the sense that there exist constants $0<c<C<\infty$, depending only on $\alpha$, such that 
	\begin{align*}
		c E(0)\leq E(t) \leq C E(0).
	\end{align*}
\end{proposition}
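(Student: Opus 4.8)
The plan is to exploit that all operators appearing are Fourier multipliers, so the system diagonalises in the frequencies $(k,\xi)$ dual to $(x,y)$ and it suffices to prove a two-sided bound on the energy density of each single mode, uniformly in $(k,\xi)$. For $k=0$ the two equations reduce to $\dt Z=\dt Q=0$ and the mode is constant, so I assume $k\neq 0$ and introduce the shifted time $\tau=\xi/k-t$, for which $\Delta_t$ has symbol $-k^2(1+\tau^2)$, $L_t$ has symbol $m:=\tau/(1+\tau^2)$, and $\p_x\Delta_t^{-1/2}$ has symbol $i\,\sgn(k)(1+\tau^2)^{-1/2}$. The $2\times 2$ system is then equivalent to the scalar Taylor--Goldstein equation
\begin{equation*}
  \p_\tau^2 \widehat{W} + \frac{\alpha}{1+\tau^2}\,\widehat{W}=0,
\end{equation*}
with $\widehat Z=\sqrt{\alpha}(1+\tau^2)^{-1/4}\widehat W$ and $\widehat Q=-i\,\sgn(k)(1+\tau^2)^{1/4}\p_\tau\widehat W$. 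I would carry out the energy estimate in this scalar form, which is equivalent to computing $\tfrac{d}{dt}E=\langle X,(\dot M-MA-AM)X\rangle$ for the Hermitian quadratic form $M$ defining $E$, but exhibits the relevant cancellation most transparently.

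Next I record the structure of the energy. Writing $u=\widehat W$, $T=|\p_\tau u|^2$ and $K=\tfrac{\alpha}{1+\tau^2}|u|^2$, a direct computation gives $|\widehat Z|^2+|\widehat Q|^2=(1+\tau^2)^{1/2}(T+K)=:\mathcal{G}$, while the cross term reduces (for real-valued data, after pairing the modes $(k,\xi)$ and $(-k,-\xi)$, which is what fixes the coefficient) to $-w\,\mathrm{Re}(\bar u\,\p_\tau u)$ with weight $w:=\tau(1+\tau^2)^{-1/2}$. Since $|w|\le 1$ and $|u|\,|\p_\tau u|=\sqrt{\tfrac{1+\tau^2}{\alpha}}\sqrt{KT}\le\tfrac{1}{2\sqrt\alpha}\mathcal{G}$ by Young's inequality, the cross term is bounded by $\tfrac{1}{2\sqrt\alpha}\mathcal{G}$. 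Here the hypothesis $\alpha>\tfrac14$ enters in its sharp form: it guarantees $\tfrac{1}{2\sqrt\alpha}<1$, so the full energy density $e:=\mathcal{G}-w\,\mathrm{Re}(\bar u\,\p_\tau u)$ is coercive, $\big(1-\tfrac{1}{2\sqrt\alpha}\big)\mathcal{G}\le e\le\big(1+\tfrac{1}{2\sqrt\alpha}\big)\mathcal{G}$, and hence comparable to $|\widehat Z|^2+|\widehat Q|^2$ with constants depending only on $\alpha$.

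The heart of the argument is the cancellation in $\p_\tau e$, and this is the step I expect to be the crux. Using the equation one computes $\p_\tau\mathcal{G}=w\,(T-K)$ and $\p_\tau\,\mathrm{Re}(\bar u\,\p_\tau u)=T-K$, so the unit weight in front of $\mathrm{Re}(\bar u\,\p_\tau u)$ is exactly the one for which the dangerous term $w(T-K)$ cancels identically. On its own this term has size $\tfrac{|\tau|}{1+\tau^2}\mathcal{G}$, whose integral diverges logarithmically and would only yield polynomial growth $\mathcal{G}\lesssim(1+\tau^2)^{1/2}$; its removal is precisely what the cross term engineered from the good unknowns achieves. What survives is
\begin{equation*}
  \p_\tau e = -(\p_\tau w)\,\mathrm{Re}(\bar u\,\p_\tau u) = -(1+\tau^2)^{-3/2}\,\mathrm{Re}(\bar u\,\p_\tau u),
\end{equation*}
which obeys the integrable differential inequality $|\p_\tau e|\le\tfrac{1}{2\sqrt\alpha}(1+\tau^2)^{-3/2}\,\mathcal{G}\le C(\alpha)(1+\tau^2)^{-3/2}\,e$.

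Finally I would close with Gr\"onwall. Since
\begin{equation*}
  \int_{\mathbb R}(1+\tau^2)^{-3/2}\,d\tau = 2
\end{equation*}
is finite and independent of $(k,\xi)$, integrating the differential inequality both forward and backward in $\tau$ yields $c(\alpha)\,e(\tau_0)\le e(\tau)\le C(\alpha)\,e(\tau_0)$ along each mode, with constants depending only on $\alpha$. By the coercivity established above this transfers to $|\widehat Z|^2+|\widehat Q|^2$, and summing in $k$ and integrating in $\xi$ via Plancherel gives the asserted two-sided bound $c\,E(0)\le E(t)\le C\,E(0)$. The only remaining bookkeeping is the $k=0$ mode, which is constant, and the verification that the symbol computations match the stated operators, which is routine.
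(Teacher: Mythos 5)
Your proposal is correct in substance, and it reaches the conclusion by a route that is organized differently from the paper's. The paper stays at the level of the $(Z,Q)$ system: it differentiates the energy directly, uses that the off-diagonal coupling is skew-adjoint (so it drops out of $\frac{d}{dt}(\|Z\|^2+\|Q\|^2)$), observes that the derivative of the cross term cancels the remaining diagonal contributions $\pm\langle L_t\cdot,\cdot\rangle$, and then applies Gronwall to the single surviving term $\langle Z, \dt(\tfrac12 L_t(\p_x\Delta_t^{-1/2})^{-1})Q\rangle$, whose symbol derivative is integrable in time frequency-wise. You instead reduce each Fourier mode to the scalar Taylor--Goldstein equation $\p_\tau^2 u+\frac{\alpha}{1+\tau^2}u=0$ and run the energy estimate in the variables $(u,\p_\tau u)$. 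The two arguments hinge on the identical cancellation: your surviving term $-(\p_\tau w)\,\mathrm{Re}(\overline{u}\,\p_\tau u)$ with $\p_\tau w=(1+\tau^2)^{-3/2}$ is precisely the paper's multiplier-derivative term, and your Gronwall input $\int_{\R}(1+\tau^2)^{-3/2}\,d\tau=2$ is the same integral the paper invokes at the end of its proof. What your version buys: the kinetic/potential splitting $T$, $K$ and the virial-type quantity $\mathrm{Re}(\overline{u}\,\p_\tau u)$ make it transparent why the weight $w$ is forced (it is the unique coefficient killing the non-integrable term $w(T-K)$) and why $\alpha>\tfrac14$ enters sharply through $\tfrac{1}{2\sqrt{\alpha}}<1$; it also makes explicit the classical Taylor--Goldstein/Legendre picture that the paper only alludes to in the remark following Lemma \ref{lemma:odemodel}. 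What the paper's formulation buys: the operator-level computation is insensitive to phase conventions and is the form that is actually reused later --- the energies $E_l$, $\tilde E_l$ in the proofs of Propositions \ref{proposition:mainres} and \ref{proposition:mainresfull} are built verbatim from this computation, whereas the scalar reduction does not survive the perturbation by the traveling wave.

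Two bookkeeping caveats, neither of which breaks your argument. First, under consistent accounting the cross term of $E$ summed over a conjugate pair of modes is $\mp w\,\mathrm{Re}(\overline{u}\,\p_\tau u)$ while the quadratic part summed over the pair is $2\mathcal{G}$, so your exactly-cancelling density $e=\mathcal{G}-w\,\mathrm{Re}(\overline{u}\,\p_\tau u)$ is not literally the density of the stated $E$ but a comparable quadratic form; the paper's own normalization has the same wobble (compare the coefficient $\frac{1}{2\sqrt{\alpha}}$ in the statement of Proposition \ref{proposition:hom} with the $\sqrt{\alpha}\tfrac12$ appearing inside its proof, and its threshold claim about the operator norm). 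This is harmless: approximate conservation of $e$ plus coercivity of both forms for $\alpha>\tfrac14$ gives $E(t)\approx\|(Z,Q)(t)\|_{L^2}^2\approx\|(Z,Q)(0)\|_{L^2}^2\approx E(0)$, which is exactly how you close the argument. Second, your factor $i\sgn(k)$ in $\widehat{Q}$ depends on the branch chosen for $\p_x\Delta_t^{-1/2}$; with the branch dictated by the derivation of the system from \eqref{eq:Boussinesq} (namely $(\p_x^{-2}\Delta_t)^{-1/2}$, which has positive symbol, making the coupling skew-adjoint) the coefficient is real. Since your computation only uses $|\widehat{Z}|$, $|\widehat{Q}|$ and the pairing of conjugate modes, this changes nothing, but it is worth fixing the convention once so that the sign of the cross term, on which the cancellation genuinely depends, is unambiguous.
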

We remark that all operators involved are constant coefficient Fourier
multipliers. The problem hence decouples in frequency and we may therefore
replace the $L^2$ space in the definition of $E(t)$ by any Fourier-based Hilbert
space such Sobolev spaces $H^s$, Besov spaces or Gevrey spaces and obtain the same result.
We further remark that, as a decoupled ODE system in Fourier space, the solution
operator could be computed explicitly. However, in view to later perturbed
estimates, where upper bounds are sufficient, we instead employ an energy estimate approach as for instance used in \cite{bedrossian21}.

\begin{proof}[Proof of Proposition \ref{proposition:hom}]
We observe that the operator on the right-hand-side of the equation
	\begin{align*}
		\dt \begin{pmatrix}
			 Z \\ Q
		\end{pmatrix}
		= \begin{pmatrix}
			-\frac{1}{2} L_t & \sqrt{\alpha} \p_x \Delta_t^{-1/2}\\ 
			 -\sqrt{\alpha} \p_x \Delta_t^{-1/2} & \frac{1}{2} L_t 
		\end{pmatrix}
		\begin{pmatrix}
			 Z \\Q
		\end{pmatrix}
	\end{align*}
has anti-symmetric off-diagonal entries.
It thus follows that 
	\begin{align*}
		\frac{d}{dt} (\|Z\|^2 + \|Q\|^2)/2 = -\langle L_t Z, \sqrt{\alpha} Z\rangle + \langle L_t Q, Q\rangle.  
	\end{align*}
	Since $L_t$ is a bounded operator one may already obtain a rough upper bound by employing Gronwall's lemma (we remark that at this point we do not yet require $\alpha>\frac{1}{4}$).
In order to improve this estimate we further use that also the diagonal entries are symmetric.
Therefore we may compute that
	\begin{align*}
		& \quad \frac{d}{dt}  \langle Z, \sqrt{\alpha} \frac{1}{2}L_t (\p_x \Delta_t^{-1/2})^{-1} Q  \rangle_{L^2}\\
		&=  \langle L_t Z, \sqrt{\alpha} Z\rangle - \langle L_t Q, Q\rangle \\
		& \quad + \langle Z, \dt(  \frac{1}{2}L_t (\p_x \Delta_t^{-1/2})^{-1}) Q \rangle.
	\end{align*}
Here the first two terms exactly cancel with the ones above and thus 
	\begin{align*}
		\frac{d}{dt} E =  \langle Z, \dt(  \frac{1}{2}L_t (\p_x \Delta^{-1/2})^{-1}) Q \rangle.
	\end{align*}
	Similarly as in \cite{bedrossian21,zillinger2020boussinesq,doering2018long} we observe that the operator norm of 
	\begin{align*}
		\frac{1}{2\sqrt{\alpha}}L_t (\p_x \Delta_t^{-1/2})^{-1} 
	\end{align*}
	is strictly smaller than $2$ if (and only if) $\alpha>\frac{1}{4}$.
	Therefore, in that case $E(t)$ is a positive definite bilinear form in $(Z,Q)$
  and it follows that 
	\begin{align*}
		\frac{d}{dt} E \leq C \left\| \dt(  \frac{1}{2}L_t (\p_x \Delta_t^{-1/2})^{-1})\right\| E. 
	\end{align*}
	The result hence follows by Gronwall's lemma and noting that the problem
  decouples in frequency.
	More precisely, instead of controlling the time integral of the operator norm of $\dt(  \frac{1}{2}L_t (\p_x \Delta^{-1/2})^{-1})$, 
	it suffices to control the time integral of the Fourier symbol for each fixed frequency:
	 \begin{align*}
     \frac{1}{2\sqrt{\alpha}}
     \int \left| \dt \frac{(\xi-kt)k}{k^2+(\xi-kt)^2} \frac{(k^2+(\xi-kt)^2)^{1/2}}{ik} \right| dt,
	 \end{align*}
	which is uniformly bounded.
\end{proof}
Having established stability of the linearized problem around $(Z,Q)=(0,0)$ in
the following we study the linearization around traveling waves.

\section{Echo Chains and Gevrey Regularity}
\label{sec:Gevrey5}
While the results of Section \ref{sec:homogeneous} establish linear stability of
$(Z,Q)=(0,0)$, nonlinear stability (see \cite{bedrossian21}) and asymptotic behavior
for large times are
very challenging problems.
As a first step towards understanding the (optimal) long time behavior of the
nonlinear Boussinesq equation \eqref{eq:Boussinesq} near  a shear and
hydrostatic balance, in the following we consider a toy model highlighting the role
of the nonlinearity and of traveling waves.

\subsection{A Toy Model and Optimal Gevrey Classes}
\label{sec:toy}
Resonance chains in phase-mixing problems often manifest as a
low frequency part of the solution interacting with the high frequency part by means of
the nonlinearity (see for instance \cite{dengmasmoudi2018,bedrossian2016landau,dengZ2019,zillinger2020landau}).
Based on this heuristic in this section we introduce a toy model capturing this
mechanism, which allows us to identify an expected optimal regularity class.
The main aim of the remainder of the paper then is to show that the linearized equations around a traveling wave indeed
exhibit this growth.

We recall that the nonlinear Boussinesq equations near a traveling wave read
\begin{align*}
  &\quad \dt
  \begin{pmatrix}
    Z \\ Q
  \end{pmatrix}
+
  \begin{pmatrix}
    -\frac{1}{2} L_t & \sqrt{\alpha} \p_x \Delta_t^{-1/2}\\
    -\sqrt{\alpha} \p_x \Delta_t^{-1/2} &  \frac{1}{2} L_t 
  \end{pmatrix}
                                        \begin{pmatrix}
                                          Z \\ Q
                                        \end{pmatrix}\\
 & =
  \begin{pmatrix}
    f(t) (1+t^2)^{1/4} |\p_x|^{1/2}\Delta_t^{-1/4} (\sin(x) \p_y |\p_x|^{-1/2}\Delta_t^{-3/4} Z) \\
    g(t) (1+t^2)^{-1/4}|\p_x|^{-1/2} \Delta_t^{1/4} (\sin(x) \p_y |\p_x|^{-1/2 }\Delta_t^{-3/4} Z) 
  \end{pmatrix}
  \\ &\quad +
  \begin{pmatrix}
    f(t) (1+t^2)^{-3/4} |\p_x|^{1/2} \Delta_t^{-1/4} (\sin(x) \p_y |\p_x|^{-1/2}\Delta_t^{1/4} Z) \\
    g(t) (1+t^2)^{-3/4}|\p_x|^{-1/2} \Delta_t^{1/4} (\sin(x) \p_y |\p_x|^{1/2} \Delta_t^{-1/4} Q) 
  \end{pmatrix}
  \\ & \quad +
  \begin{pmatrix}
    |\p_x|^{1/2}\Delta^{-1/4} (\nabla^\perp |\p_x|^{-1/2}\Delta_t^{-3/4}Z \cdot \nabla |\p_x|^{-1/2}\Delta_t^{1/4}Z)\\
   |\p_x|^{-1/2} \Delta^{1/4} (\nabla^\perp|\p_x|^{-1/2} \Delta_t^{-3/4}Z \cdot \nabla |\p_x|^{1/2}\Delta_t^{-1/4}Q)
  \end{pmatrix}.
\end{align*}
Here we consider $Z$ and $Q$ to be at high frequency and thus the
right-hand-side can be seen as paraproduct decomposition into low-high, high-low and
high-high frequency products.

In order to derive our toy model in a first step we ignore all terms except the low-high term,
which drives the resonance mechanism and arrive at
\begin{align*}
  \dt
  \begin{pmatrix}
    Z \\ Q
  \end{pmatrix}
  \approx 
  \begin{pmatrix}
    f(t) (1+t^2)^{1/4} |\p_x|^{1/2}\Delta_t^{-1/4} (\sin(x) \p_y |\p_x|^{-1/2}\Delta_t^{-3/4} Z) \\
    g(t) (1+t^2)^{-1/4}|\p_x|^{-1/2} \Delta_t^{1/4} (\sin(x) \p_y |\p_x|^{-1/2 }\Delta_t^{-3/4} Z) 
  \end{pmatrix}.
\end{align*}
We observe that in this model the evolution for $Z$ decouples and, since the
coefficient functions do not depend on $y$ the equations further decouples after a Fourier
transform in $y$ (which is also true for the linearized problem around a wave,
but not for the nonlinear problem).
Let thus $\xi> 1000$ be a given frequency in $y$ and suppose that $Z$ is
localized at frequency $k$ in $x$ and $\xi$ in $y$.
Then the multiplier
\begin{align*}
  \Delta^{-3/4} \leadsto (k^2)^{-3/4} (1+ (t-\frac{\xi}{k})^2)^{-3/4}
\end{align*}
is small unless $t \approx \frac{\xi}{k}$.
As in \cite{bedrossian21} for the toy model we thus consider a two-dimensional
system, which is supposed to approximate the evolution of $\mathcal{F}(Z) (k,
\xi)$ and $\mathcal{F}(Z)(k-1,\xi)$ on a time interval, where $t \approx
\frac{\xi}{k}$. More precisely, we replace powers of $\Delta_t$ by its Fourier
symbol and for simplicity of the model approximate $t-\frac{\xi}{k+1} \approx
\frac{\xi}{k}-\frac{\xi}{k+1}\approx \frac{\xi}{k^2}$ and $\frac{k}{k+1}\approx 1$. Then the toy model reads:
\begin{align*}
  \dt Z_{R} &= f(t) (1+t^2)^{1/4} \frac{\xi}{k^2} \frac{1}{(1+(t-\frac{\xi}{k})^2)^{1/4}}  ((\frac{\xi}{k^2})^2)^{-3/4 } Z_{NR}, \\
  \dt Z_{NR} &= f(t) (1+t^2)^{1/4} \frac{\xi}{k^2} ((\frac{\xi}{k^2})^2)^{-1/4} \frac{1}{(1+(t-\frac{\xi}{k})^2)^{3/4}}  Z_{R}. 
\end{align*}
In \cite{bedrossian21} the authors construct a toy model in the same way, but
further estimate $f(t)(1+t^2)^{1/4} \leq 1$ from above.
Recalling from Lemma \ref{lem:Etilde} that $f(t)\leq C \epsilon \sqrt{1+t}$ uniformly in time, this upper bound is
achieved for $t$ being comparable to $\epsilon^{-2}$.
However, for smaller times this upper bound is a (potentially large) overestimate, leading to
a larger growth bound on the chain of resonances.

More precisely, we observe that by our choice of time interval $t\approx
\frac{\xi}{k}$.
This toy model thus suggests a growth by
\begin{align*}
 \epsilon \sqrt{\frac{\xi}{k}} \sqrt{\frac{\xi}{k^2}} \leq \sqrt{\frac{\xi}{k^2}},
\end{align*}
which is potentially much smaller:
\begin{lemma}
  \label{lemma:toymodel}
  Let $\xi\geq 100$ and $k \in \N$ be given and define
  \begin{align*}
    t_0&= 2 \xi, \\
    t_{k} &= \frac{1}{2}(\frac{\xi}{k+1}+ \frac{\xi}{k}).
  \end{align*}
  Then on the time interval $I_k=(t_k, t_{k-1})$ we consider the simplified toy
  model
  \begin{align*}
    \dt Z_{R} &= 0,\\
    \dt Z_{NR} &= \epsilon (1+t^2)^{1/4} \frac{\xi}{k^2} ((\frac{\xi}{k^2})^2)^{-1/4} \frac{1}{(1+(t-\frac{\xi}{k})^2)^{3/4}}  Z_{R}. 
  \end{align*}
  Then if $Z_{NR}(t_{k})=0$ there exists a constant $1<C<10$ such that
  \begin{align*}
    Z_{NR}(t_{k-1})&= Z_{R}(t_{k}) \int  \epsilon (1+t^2)^{1/4} \frac{\xi}{k^2} ((\frac{\xi}{k^2})^2)^{-1/4} \frac{1}{(1+(t-\frac{\xi}{k})^2)^{3/4}}dt \\
    &\approx C \epsilon \sqrt{\frac{\xi}{k}} \sqrt{\frac{\xi}{k^2}} Z_{R}(t_{k}),
  \end{align*}
  where we use $\approx$ to denote upper and lower bounds within a factor $10$.
\end{lemma}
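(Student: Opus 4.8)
The plan is to read the system as two decoupled scalar linear ODEs in time at the fixed frequency pair $(k,\xi)$, so that the first (exact) identity is pure integration and all the content sits in the two-sided bound on the resulting time integral. Since $\dt Z_R=0$ on $I_k$, we have $Z_R(t)\equiv Z_R(t_k)$ throughout the interval; substituting this constant into the right-hand side of the $Z_{NR}$ equation and integrating from $t_k$ to $t_{k-1}$ with the initial condition $Z_{NR}(t_k)=0$ then gives exactly the stated integral formula, with no approximation.

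Next I would simplify the integrand and rescale. Using $\frac{\xi}{k^2}\big((\frac{\xi}{k^2})^2\big)^{-1/4}=(\frac{\xi}{k^2})^{1/2}$, I pull the constant prefactor $\epsilon(\frac{\xi}{k^2})^{1/2}$ out of the integral, so the task reduces to showing $J:=\int_{t_k}^{t_{k-1}}(1+t^2)^{1/4}(1+(t-\xi/k)^2)^{-3/4}\,dt\approx\sqrt{\xi/k}$ within a factor $10$. Substituting $s=t-\xi/k$, I compute the endpoints $t_k-\xi/k=-\frac{\xi}{2k(k+1)}$ and (for $k\ge 2$) $t_{k-1}-\xi/k=\frac{\xi}{2k(k-1)}$, both of size $\approx\frac{\xi}{2k^2}$ (the case $k=1$ being handled directly from $t_0=2\xi$). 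Thus the integration window is essentially symmetric about the resonant time $t=\xi/k$ with half-width comparable to $\frac{\xi}{k^2}$, whereas the kernel $(1+s^2)^{-3/4}$ lives on a fixed $O(1)$ scale.

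The estimate then rests on separating these two scales. On $I_k$ the variable $t$ stays within a bounded factor of $\xi/k$ (the half-width is a factor $1/k$ smaller than the center), so the amplitude $(1+t^2)^{1/4}$ is frozen at its resonant value $\approx(\xi/k)^{1/2}$ up to a factor at most $2$; this is where $\xi/k\gtrsim 1$, i.e. the genuine resonance regime $1\le k\lesssim\sqrt{\xi}$ guaranteed by $\xi\ge 100$, is used. Factoring this out leaves $J\approx(\xi/k)^{1/2}\int_{-a}^{b}(1+s^2)^{-3/4}\,ds$ with $a,b\approx\frac{\xi}{2k^2}$. Since $(1+s^2)^{-3/4}\sim|s|^{-3/2}$ is integrable at infinity, the full integral $\int_{\R}(1+s^2)^{-3/4}\,ds=\frac{\sqrt{\pi}\,\Gamma(1/4)}{\Gamma(3/4)}$ is a finite universal constant (numerically $\approx 5.2$), and restricting to the window $(-a,b)$ with $a,b\gtrsim 1$ decreases it only by a bounded factor, keeping the $s$-integral in a fixed compact subinterval of $(0,\infty)$. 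Multiplying by $(\xi/k)^{1/2}$ and reinstating the prefactor $\epsilon(\xi/k^2)^{1/2}$ yields the claim, with a constant $C$ absorbing all the bounded factors.

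I expect the only real obstacle to be bookkeeping these constants so that everything stays within the advertised factor of $10$: one must simultaneously check that the kernel's tail outside the window is negligible and that the variation of the amplitude across a window which is wide relative to the unit kernel scale (once $\xi/k^2\gg 1$) is controlled by a constant. This interplay of the window half-width $\xi/k^2$ against the $O(1)$ kernel scale is precisely what pins down the relevant range $1\le k\lesssim\sqrt{\xi}$ and forces $1<C<10$.
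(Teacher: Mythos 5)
Your proposal is correct and follows essentially the same route as the paper's proof: the integral identity is immediate from $Z_R$ being constant, the prefactor is frozen at its resonant value $\epsilon(\xi/k)^{1/2}(\xi/k^2)^{1/2}$ since $t\approx \xi/k$ on $I_k$, and the kernel integral over $I_k$ is compared to the full integral $\int_{\R}(1+s^2)^{-3/4}\,ds=\sqrt{\pi}\,\Gamma(1/4)/\Gamma(3/4)\approx 5.2$, which is legitimate precisely when the window half-width $\xi/k^2$ is at least of order one --- the same implicit restriction the paper invokes. The only quibble is your parenthetical ``$\xi/k\gtrsim 1$, i.e.\ $1\le k\lesssim\sqrt{\xi}$'' (these are the conditions $k\lesssim\xi$ and $k\lesssim\sqrt{\xi}$, respectively, and both are needed at different steps), but your final paragraph identifies the correct binding constraint.
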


\begin{itemize}
\item We remark that here we neglected the evolution of $Z_{R}$, which in turn effects
the evolution of $Z_{NR}$. As we discuss in Section \ref{sec:echo} taking this
coupling into account results in a slightly modified growth bound by
\begin{align*}
  C \epsilon \sqrt{\frac{\xi}{k}} \left( \frac{\xi}{k^2} \right)^{\gamma}
\end{align*}
with $|\gamma-\frac{1}{2}|<\delta$ instead.
\item By the restriction on the time scale it holds that $\epsilon
  \sqrt{\frac{\xi}{k}}\approx \epsilon \sqrt{t}\leq 1$. Thus the amount of
  growth may be estimated from above by $\sqrt{\frac{\xi}{k^2}}$, uniformly in $\xi$.
  However, this is an over estimate for most values of $k$ and $\xi$.
\item As we discuss following the the proof of this lemma, the dependence on
  $\epsilon$ and $k$ here strongly differs from the one of the Euler equations.
\end{itemize}

\begin{proof}[Proof of Lemma \ref{lemma:toymodel}]
  The integral formula is immediate. We further observe that
  \begin{align*}
     \epsilon (1+t^2)^{1/4} \frac{\xi}{k^2} \left((\frac{\xi}{k^2})^2\right)^{-1/4} \approx \epsilon \left(\frac{\xi}{k}\right)^{1/2} \left(\frac{\xi}{k^2}\right)^{1/2} = \epsilon \frac{\xi}{k^{3/2}}, 
  \end{align*}
  and that
  \begin{align*}
    \int_{\R} \frac{1}{(1+(t-\frac{\xi}{k})^2)^{3/4}}dt =\frac{\sqrt{\pi}\Gamma(\frac{1}{4})}{\Gamma(\frac{3}{4})}\approx 5.2 \neq 0.
  \end{align*}
  The result hence follows by observing that if $\frac{\xi}{k^2}$ is
  sufficiently large the integral over $I_k$ is comparable to the integral over
  all of $\R$.
\end{proof}

Iterating this heuristic growth bound we may conjecture a total growth of
\begin{align}
  \sup_{k_0} \prod_{k=1}^{k_0} \frac{\epsilon \xi}{k^{3/2}} \leadsto \frac{1}{(\epsilon \xi)^{2/3}}\exp((\epsilon \xi)^{2/3})
\end{align}
by choosing $k_0\approx (\epsilon\xi)^{2/3}$ and using Stirling's
approximation.
Thus at first sight this toy model suggests stability for Gevrey $3/2$ regular
initial data, uniformly in $0<\epsilon<1$ and for all times.
However, since our evolution is restricted to the time interval
\begin{align*}
  (0, \delta \epsilon^{-2})
\end{align*}
this is an overestimate for large values of $\xi$.
Indeed, for a resonance to happen the time $t \approx
\frac{\xi}{k}$ needs to have passed.
Thus if $\xi$ is very large then in the above product we may only consider
those $k$ for which
\begin{align*}
  \frac{\xi}{k}\leq \epsilon^{-2} \Leftrightarrow k\geq \xi \epsilon^2,
\end{align*}
while by the above consideration our cascade should start at $k_0$ to maximize
the product and we thus arrive at an estimate of the possible norm inflation by
\begin{align}
  \prod_{\xi \epsilon^2\leq k \leq (\epsilon\xi)^{2/3}} \frac{\epsilon \xi}{k^{3/2}}.
\end{align}
\begin{lemma}
  \label{lemma:Stirling}
  For $0<\epsilon<0.1$ and $0<\xi< \epsilon^{-4}$ consider the function
  \begin{align*}
    G(\xi,\epsilon) := \prod_{\xi \epsilon^2\leq k \leq (\epsilon\xi)^{2/3}} \frac{\epsilon \xi}{k^{3/2}}.
  \end{align*}
  Then it holds that
  \begin{align*}
    G(\xi, \epsilon) \leq C \exp(3/2 (\epsilon \xi)^{2/3}).
  \end{align*}
  Moreover, this bound is attained in the sense that for any $1< \sigma < 4$
  we may consider $\xi=\epsilon^{-\sigma}$ and there exists $\epsilon_{\sigma}>0$
  such that for $0<\epsilon< \epsilon_{\sigma}$
  \begin{align*}
     G(\epsilon^{-\sigma}, \epsilon) \geq C \exp(0.1 (\epsilon \epsilon^{-\sigma})^{2/3})
  \end{align*}
\end{lemma}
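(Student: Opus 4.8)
The plan is to pass to logarithms and compare the resulting sum with an integral. Writing $A := \epsilon\xi$, we have $\log G = \sum_{k}\bigl(\log A - \tfrac32\log k\bigr)$, where $k$ ranges over the integers in $[\epsilon^2\xi,(\epsilon\xi)^{2/3}] = [\epsilon A, A^{2/3}]$; this interval is non-empty precisely because $\xi < \epsilon^{-4}$, which is equivalent to $\epsilon A < A^{2/3}$. The summand $\phi(x) = \log A - \tfrac32\log x$ is positive for $x < A^{2/3}$, vanishes at $x = A^{2/3}$, and is monotonically decreasing, with antiderivative $\Phi(x) = x\bigl(\log A - \tfrac32\log x + \tfrac32\bigr)$ satisfying $\Phi(A^{2/3}) = \tfrac32 A^{2/3}$. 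All the asymptotics below are driven by the value of $\Phi$ at its critical point $A^{2/3}$.

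For the upper bound I would simply drop the lower cut-off: since every omitted factor $\frac{A}{k^{3/2}}$ with $1\le k < \epsilon A$ exceeds one (as $k < \epsilon A \le A^{2/3}$), we have $G \le \prod_{k=1}^{\lfloor A^{2/3}\rfloor}\frac{A}{k^{3/2}} \le \sup_{K\ge 1}\prod_{k=1}^{K}\frac{A}{k^{3/2}}$. Taking logarithms and bounding $\log(K!) \ge \int_1^K\log x\,dx = K\log K - K + 1$ from below gives $\log\prod_{k=1}^{K}\frac{A}{k^{3/2}} \le K\bigl(\log A - \tfrac32\log K + \tfrac32\bigr) - \tfrac32 = \Phi(K) - \tfrac32$, whose maximum over real $K>0$ is attained at $K = A^{2/3}$ and equals $\tfrac32 A^{2/3} - \tfrac32$. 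Hence $G \le \exp\bigl(\tfrac32(\epsilon\xi)^{2/3}\bigr)$, i.e. the claimed bound with $C=1$.

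For the lower bound with $\xi = \epsilon^{-\sigma}$, $1<\sigma<4$, I would instead keep both endpoints and estimate the sum from below by the integral, using that $\phi$ is decreasing: $\log G \ge \int_{n_1}^{n_2+1}\phi(x)\,dx = \Phi(n_2+1) - \Phi(n_1)$, with $n_1 = \lceil \epsilon A\rceil$ and $n_2 = \lfloor A^{2/3}\rfloor$. Setting $L := -\log\epsilon > 0$ one has $A^{2/3} = e^{\frac23(\sigma-1)L}$ and $\epsilon A = e^{(\sigma-2)L}$, and a direct evaluation gives $\Phi(n_2+1) = \tfrac32 A^{2/3}(1+o(1))$ (since $\Phi''(x) = -\tfrac{3}{2x}$ makes the correction near the maximum negligible), while $\Phi(n_1)$ is of order $e^{(\sigma-2)L}$ times a factor linear in $L$, and is only polynomially large in $L$ when $\sigma\le 2$, since then $n_1 = 1$. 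The key arithmetic identity is $\tfrac23(\sigma-1) - (\sigma-2) = \tfrac13(4-\sigma) > 0$, which shows that $A^{2/3} = e^{\frac23(\sigma-1)L}$ strictly dominates the subtracted term as $L\to\infty$. Consequently $\log G = \tfrac32 A^{2/3}(1+o(1))$, and in particular $\log G \ge 0.1\,(\epsilon\xi)^{2/3}$ once $\epsilon < \epsilon_\sigma$.

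I expect the only delicate point to be this lower bound as $\sigma\uparrow 4$: there the exponential gap $\tfrac13(4-\sigma)$ degenerates, the endpoint contributions $\Phi(n_2+1)$ and $\Phi(n_1)$ become comparable in order, and one must track the precise constants (and the $L$-linear prefactor of $\Phi(n_1)$) to confirm that the difference is still a positive multiple of $A^{2/3}$; this is exactly where the threshold $\epsilon_\sigma$ must shrink as $\sigma$ approaches $4$. Equivalently, one could bypass the integral comparison and apply Stirling's formula directly to $\prod_{k=n_1}^{n_2}k = n_2!/(n_1-1)!$, which is the source of the lemma's name; the two routes give identical leading asymptotics.
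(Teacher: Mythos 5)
Your proof is correct and follows essentially the same route as the paper: for the upper bound you extend the product down to $k=1$ and maximize at $k\approx(\epsilon\xi)^{2/3}$, and for the lower bound you show the truncation at $k\approx\epsilon^{2}\xi$ is subdominant via the same key exponent inequality $\tfrac{2}{3}(\sigma-1)>\sigma-2$ for $\sigma<4$. The only cosmetic difference is that you replace the paper's use of Stirling's approximation by a direct sum--integral comparison of $\log G$, which, as you yourself note, is an equivalent device (and has the minor benefit of giving the explicit constant $C=1$ in the upper bound).
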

We note that
\begin{align*}
  \xi \epsilon^2 &\leq (\epsilon\xi)^{2/3} \\
  \Leftrightarrow \xi &\leq \epsilon^{-4}
\end{align*}
and that the product is empty if $\xi$ is larger than this. In particular, for
$\xi \geq \epsilon^{-4}$ we may expect to obtain a uniform bound instead of norm
inflation (this is shown in Proposition \ref{proposition:largefreq}).

\begin{proof}[Proof of Lemma \ref{lemma:Stirling}]
  For simplicitiy of notation let $k_0=\lfloor (\epsilon\xi)^{2/3} \rfloor$.
  Then for the upper bound we may replace the starting point of the product by $1$ to obtain
  \begin{align*}
    G(\xi, \epsilon)&\leq \prod_{1\leq k \leq k_0} \frac{\epsilon \xi}{k^{3/2}}\\
    &=  \left(  (\epsilon \xi)^{2/3 k_0} /k_0! \right)^{3/2}.
  \end{align*}
  For $k_0\leq 100$ we may control this quantity by a constant uniformly in
  $\epsilon$ and $\xi$, since $(\epsilon\xi)^{2/3}\leq k_0 +1$.
  It hence suffices to discuss the case when $k_0$ is large, where by Stirling's
  approximation formula it holds that
  \begin{align*}
    k_0! \sim \sqrt{2\pi k_0} k_0^{k_0} e^{-k_0}.
  \end{align*}
  We may therefore further estimate
  \begin{align*}
     G(\xi, \epsilon)\leq e^{-3/2 k_0} (2\pi k_0)^{-3} \left( (\epsilon\xi)^{2/3}/k_0 \right)^{k_0},
  \end{align*}
  which yields the desired upper bound by noting that the last factor is
  controlled by $((k_0+1)/k_0)^{k_0}$ and hence uniformly bounded.

  For the lower bound we argue similarly, but now have to take into account that
  the product starts at $k_1:=\lfloor \epsilon^2 \xi \rfloor$ and hence
  \begin{align*}
    G(\xi, \epsilon) =  \left(  (\epsilon \xi)^{2/3 k_0} /k_0! \right)^{3/2} \left(  k_1! (\epsilon \xi)^{-2/3 k_1} \right)^{3/2}.
  \end{align*}
  We thus need to show that the second factor is not too small and hence cannot
  cancel the growth.
  Here we again may restrict to the case when $k_1$ is large and use Stirling's approximation to compute
  \begin{align*}
    k_1! (\epsilon \xi)^{-2/3 k_1} &\sim (k_1(\epsilon\xi)^{-2/3})^{k_1} e^{-k_1} \sqrt{2\pi k_1} \\
                                   &\approx (\epsilon^2 \xi (\epsilon\xi)^{-2/3})^{k_1} e^{-k_1} \sqrt{2\pi k_1} \\
                                   &= (\xi \epsilon^{4})^{k_1/3} e^{-k_1} \sqrt{2\pi k_1}\\
                                   &= \exp(-k_1 (1+ \log(\xi \epsilon^4))) \sqrt{2\pi k_1}.
  \end{align*}
  It thus suffices to estimate
  \begin{align*}
    \exp\left( 3/2 k_0 - 3/2 k_1 (1+ \log(\xi \epsilon^4)) \right).
  \end{align*}
  Here we observe that for $\xi= \epsilon^{-\sigma}$ it holds that
  \begin{align*}
    k_0 &\approx \epsilon^{(1-\sigma)\frac{2}{3}}, \\ 
    k_1 &\approx \epsilon^{2-\sigma}, \\
    \log(\xi \epsilon^4) & \approx (4-\sigma) \log(\epsilon).
  \end{align*}
  Since $\sigma>1$ the power of $\epsilon$ in the formula for $k_1$ is negative
  and for $\sigma<4$ it holds that
  \begin{align*}
    (1-\sigma)\frac{2}{3} < 2-\sigma.
  \end{align*}
  Hence for $0<\epsilon <\epsilon_{\sigma}$ sufficiently small $3/2 k_0$ dominates.
\end{proof}

Thus on this time interval the total growth is limited in terms of $\epsilon$
(by a frequency cut-off) and letting $\epsilon$ tend to zero the optimal Gevrey regularity class is expected to be
given by $2$.
\begin{conjecture}
  Let $0< \epsilon \leq 0.1$ and consider the nonlinear Boussinesq equations
  perturbed around traveling waves of size $\epsilon$.
  Then on the time interval $(0, \epsilon^{-2})$ the optimal space for stability
  is given by the Fourier weight
  \begin{align*}
    \begin{cases}
      \exp((\epsilon \xi)^{2/3}) & \text{ if } \epsilon^{-1} \leq \xi < 100 \epsilon^{-4},\\
      1 & \text{ else}.
  \end{cases}
  \end{align*}
\end{conjecture}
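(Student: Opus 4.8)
The statement is a \emph{conjecture}, so in place of a complete proof I outline a strategy that would establish it, splitting the claim into its two halves: \emph{sufficiency} (stability holds in the stated class) and \emph{sharpness} (stability fails in any strictly weaker class). The plan is to bootstrap from the linearized result of Theorem \ref{theorem:main} to the full nonlinear problem for the upper bound, and to realize the echo cascade of Lemmas \ref{lemma:toymodel} and \ref{lemma:Stirling} explicitly for the lower bound.

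For sufficiency I would first observe that the conjectured weight is exactly the $\gamma\to 0$ degeneration of the multiplier in Theorem \ref{theorem:main}: the first branch of $\min((\epsilon|\xi|^{1+\gamma})^{2/3-2\gamma},\epsilon^{-2})$ reduces to $(\epsilon\xi)^{2/3}$ on $\epsilon^{-1}\le\xi<100\epsilon^{-4}$, while for $\xi\ge 100\epsilon^{-4}$ the resonance product is empty (the remark following Lemma \ref{lemma:Stirling}) and the weight collapses to $1$, consistent with the uniform estimate \eqref{eq:uniform}. The remaining work is to pass from the simplified linearized system to the genuine nonlinear equation. This has two components: (i) removing the simplification, i.e.\ controlling the omitted term of Section \ref{sec:technical}, which is only non-negligible for small times and there costs at most a loss of Gevrey regularity; and (ii) closing a nonlinear bootstrap in which the high-high and high-low paraproducts are absorbed by the Gevrey-$2$ energy, following the scheme of \cite{bedrossian21} but exploiting both the smallness $f(t)\lesssim\epsilon\sqrt{1+t}$ of the wave at small times and the frequency cut-off $\xi\epsilon^2\le k$ imposed by the time horizon $\epsilon^{-2}$.

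For sharpness I would construct initial data realizing the lower bound of Lemma \ref{lemma:Stirling}. Fixing $\xi=\epsilon^{-\sigma}$ with $1<\sigma<4$ and data concentrated at the top mode $k_0\approx(\epsilon\xi)^{2/3}$, I would track the solution along the resonance chain $k_0\to k_0-1\to\cdots\to k_1\approx\epsilon^2\xi$ and show that on each window $I_k$ the true dynamics reproduces, up to controllable errors, the $2\times 2$ transfer law of Lemma \ref{lemma:toymodel}, so that the accumulated growth is comparable to $\prod \epsilon\xi/k^{3/2}=G(\xi,\epsilon)$. The lower bound $G(\epsilon^{-\sigma},\epsilon)\ge C\exp(0.1(\epsilon\xi)^{2/3})$ then forces any weight guaranteeing stability to dominate $\exp(c(\epsilon\xi)^{2/3})$ on the stated range, establishing optimality; combining this with the uniform bound of Theorem \ref{theorem:main} for $\xi\ge c\epsilon^{-4}$ and the trivial stability for $\xi\le\epsilon^{-1}$ pins down the two regimes of the weight.

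The hard part will be the sharpness direction in the nonlinear setting. One must show that the idealized cascade is faithfully realized over roughly $(\epsilon\xi)^{2/3}$ consecutive resonances: the coupling between $Z_R$ and $Z_{NR}$ that was dropped in Lemma \ref{lemma:toymodel} has to be reinstated (shifting the effective exponent to $\gamma\approx\tfrac12$), the resonances must remain phase-coherent across all windows, and the neglected high-high interactions together with the non-simplified terms of Section \ref{sec:technical} must be shown not to derail the chain. This multi-step stability-of-resonances analysis is precisely the obstruction that keeps the statement at the level of a conjecture rather than a theorem, and it is the reason the present article establishes only the matching upper bounds.
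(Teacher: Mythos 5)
This statement is a \emph{conjecture}, and the paper offers no proof of it --- only the heuristic support of Lemmas \ref{lemma:toymodel} and \ref{lemma:Stirling} together with the linearized upper bounds of Theorem \ref{theorem:main} and the model reduction of Section \ref{sec:technical}. Your outline reconstructs essentially that same program (identifying the conjectured weight as the degeneration of the linearized multiplier, removing the simplification, closing a nonlinear bootstrap in the spirit of \cite{bedrossian21}, and realizing the echo cascade of the toy model for sharpness), and it correctly flags the same obstructions the paper itself acknowledges --- in particular that only upper bounds are established and that the nonlinear, multi-window realization of the resonance chain remains open --- so it is an accurate account of the paper's intended route rather than a proof.
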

We remark that stability in Gevrey $2$, that is a bound by $\exp(C \sqrt{\xi})$,
has been established in \cite{bedrossian21}, which coincides with the above
weight for $\xi=\epsilon^{-4}$. The conjecture suggests an
improvement to these estimates when $\xi$ is much smaller or much larger than
$\epsilon^{-4}$ and that for $\xi < \epsilon^{-4}$ the exponent of growth
\begin{align*}
  (\epsilon\xi)^{2/3} < \xi^{1/2}
\end{align*}
is attained.
As first step towards proving this conjecture in this article we show that this
statement is true for the (simplified) linearized Boussinesq equations around traveling
waves.
We remark that the above heuristic also suggests growth bounds for
$t>\epsilon^{-2}$ for data in higher regularity classes (e.g. global in time for
Gevrey $\frac{3}{2}$).
However, at that point the toy model simplification
\begin{align*}
  \dt Z_R \approx 0
\end{align*}
ceases to be justified and the toy model has to be replaced. The question of
stability on larger time scales than $(0, \epsilon^{-2})$ for more regular data thus remains an
interesting problem for future research.

For comparison we also note that for the Euler equations and a wave initially of
size $\epsilon$ the growth of the vorticity $\omega$ (instead of $Z$) is bounded
by 
\begin{align*}
  \exp(\sqrt{\epsilon \xi})
\end{align*}
and thus stability in Gevrey $2$ regularity holds when considering arbitrarily large times \cite{dengmasmoudi2018,dengZ2019,bedrossian2013inviscid}.
When also restricting to the time interval $t<\epsilon^{-2}$ we require that
$\xi$ is such that
\begin{align*}
  \frac{\xi}{\sqrt{\epsilon \xi}} \leq \epsilon^{-2} \\
  \leadsto \xi \leq \epsilon^{-3}
\end{align*}
and thus
\begin{align*}
   \exp(\sqrt{\epsilon \xi}) \leq \exp(\min(\xi^{1/3}, \epsilon^{-1}).
\end{align*}
From this heuristic model we can thus already see that the hydrostatic balance
for $\alpha>\frac{1}{4}$ yields a strong change of the stability and norm
inflation behavior of the Boussinesq equations compared to the Euler equations:
The growth of the underlying traveling wave results in larger norm inflation.

\subsection{The Inhomogeneous Problem and Upper Bounds}
\label{sec:upperbounds}

Building on the heuristic of the previous model in the following we consider the
simplified linearized
Boussinesq equations around a traveling wave
\begin{align}
  \label{eq:Boussinesqsys}
  \dt
  \begin{pmatrix}
    Z \\Q
  \end{pmatrix}
  + A
  \begin{pmatrix}
    Z \\ Q
  \end{pmatrix}
  &=
  \begin{pmatrix}
    f(t)|\p_x|^{1/2}\Delta_t^{-1/4}(\p_y|\p_x|^{-1/2}\Delta_t^{-3/4}Z \cos(x))\\
    g(t)|\p_x|^{-1/2} \Delta_{t}^{1/4}(\p_y |\p_x|^{-1/2}\Delta_t^{-3/4}Z \cos(x))
  \end{pmatrix},
\end{align}
where we omitted the low frequency velocity contribution
\begin{align*}
  \frac{1}{1+t^2}
  \begin{pmatrix}
    f(t)|\p_x|^{1/2}\Delta_t^{-1/4}(\cos(x)\p_y |\p_x|^{-1/2}\Delta_t^{1/4}Z)\\
    f(t)|\p_x|^{-1/2}\Delta_t^{1/4}(\cos(x)\p_y |\p_x|^{-1/2}\Delta_t^{-1/4}Q)
  \end{pmatrix}.
\end{align*}
As we discuss in Section \ref{sec:technical} this term does not qualitatively
change the dynamics at large times, but is technically challenging to control
for small times.

As suggested by the notation we consider this problem as a (possibly large)
perturbation of the inhomogeneous problem of Section \ref{sec:hom}.
In particular, if we denote by
\begin{align*}
  S(t_2,t_1)
\end{align*}
the solution operator of the homogeneous problem, then we may equivalently
express the above differential equation as the integral equation
\begin{align*}
  \begin{pmatrix}
    Z\\ Q
  \end{pmatrix}
  (t_2) &= S(t_2,t_1)
  \begin{pmatrix}
    Z \\ Q
  \end{pmatrix}(t_1) \\
  & \quad + \int_{t_1}^{t_2} S(t_2,t)   \begin{pmatrix}
    f(t)|\p_x|^{1/2}\Delta_t^{-1/4}(\p_y|\p_x|^{-1/2}\Delta_t^{-3/4}Z \cos(x))\\
    g(t)|\p_x|^{1/2} \Delta_{t}^{1/4}(\p_y |\p_x|^{-1/2} \Delta_t^{-3/4}Z \cos(x))
                                               \end{pmatrix} dt
\end{align*}
We next recall that the solution operator $S(\cdot,\cdot)$ is given by a
Fourier multiplier and decouples in frequency.
Hence, taking a Fourier transform in both $x$ and $y$ we arrive at a system with
nearest neighbor interaction.
\begin{defi}[Inhomogeneous system]
  \label{defi:system}
The simplified linearized Boussinesq equations around a traveling waves for a perturbation
frequency localized at $\xi \in \R$ read
\begin{align}
  \label{eq:system}
  \begin{split}
  \begin{pmatrix}
    Z_k \\ Q_k
  \end{pmatrix}(t_2)
  & = S_k(t_2,t_1)
    \begin{pmatrix}
      Z_k \\ Q_k
    \end{pmatrix}(t_1)
  \\
  & \quad  + \int_{t_1}^{t_2} S_k(t_2, t)
    \begin{pmatrix}
      c_k^{+} Z_{k+1} + c_{k}^{-} Z_{k-1} \\
      d_k^{+} Z_{k+1} + d_{k}^{-} Z_{k-1}
    \end{pmatrix} dt
  \end{split}
\end{align}
where we introduced the coefficient functions
\begin{align}
  \label{eq:coeff1}
  \begin{split}
  c_{k}^{\pm} &= \pm\frac{1}{2} f(t)\xi (1+(\xi/k-t)^2)^{-1/4} (1+(\xi/(k\pm 1)-t)^2)^{-3/4},\\
  d_{k}^{\pm} &= \pm \frac{1}{2} g(t)\xi \frac{k}{k\pm 1} (1+(\xi/k-t)^2)^{1/4}  (1+(\xi/(k\pm 1)-t)^2)^{-3/4}.
  \end{split}
\end{align}
and denote by $Z_{k}, Q_k$ the Fourier modes at frequency $k \in \Z$ in $x$
  and frequency $\xi \in \R$ in $y$. As the system decouples in $\xi$ we treat it
  as a fixed parameter and suppress it in our notation. 
\end{defi}
The toy model of Section \ref{sec:toy} here omitted all terms except the main
resonance mechanism due to $c_{k-1}^{+}$. Our main aim in the following is to show that this model
indeed provides an accurate heuristic and that all other contributions can be
controlled.
\begin{itemize}
\item The main time regime of interest is given by time intervals $I_k$, where
  $t\approx \frac{\xi}{k}$, for which $c_{k-1}^{+}$ is comparatively large.
This regime is studied in Section \ref{sec:echo}.
  Here the coupling of modes leads to a modified growth behavior as compared to the
  toy model of Section \ref{sec:toy}.
\item In Sections \ref{sec:longtime} and \ref{sec:small} we show that in the
  remaining time intervals resonances are too small to have a large effect on
  the dynamics and the evolution is at most algebraically unstable.
\end{itemize}

\subsubsection{The Long Time Regime}
\label{sec:longtime}

In this section we consider the regime of ``large'' times, where 
\begin{align*}
  2\xi < t < \delta \epsilon^{-2}.
\end{align*}
As suggested by the heuristic model of Section \ref{sec:toy} for such large
times there are no resonances and hence the evolution is at most algebraically unstable.
\begin{proposition}
  \label{proposition:longtime}
  Let $0< \xi < 2 \delta \epsilon^{-2}$ be given and consider the time interval
  \begin{align*}
    I= (2 \xi, \delta \epsilon^{-2}).
  \end{align*}
  Then on $I$ the solution to the system \eqref{eq:system} grows at most
  algebraically in any
  Sobolev or suitable Gevrey space in the sense that the Fourier projections
  away from and onto the modes $k=-1,1$ satisfy
  \begin{align*}
    \|1_{|k|\neq 1}(Z, Q)(t)\| &\leq C_{\alpha} \exp(10) \sqrt{\frac{t}{\xi}}  \|(Z, Q)(2\xi)\|, \\
    \|1_{|k|=1}(Z, Q)(t)\| &\leq C_{\alpha, \gamma} \left(\frac{t}{\xi}\right)^{\gamma} \exp(10) \sqrt{\frac{t}{\xi}} \|(Z, Q)(2\xi)\|,
  \end{align*}
  for any $t \in I$ and any $1/2<\gamma<1$.
\end{proposition}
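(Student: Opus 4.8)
The plan is to treat the system \eqref{eq:system} as a perturbation of the homogeneous flow and to exploit that on $I=(2\xi,\delta\epsilon^{-2})$ one is past all resonances: since $t>2\xi$ while the resonant times are $\xi/k$ with $|k|\geq 1$, every mode satisfies $t-\xi/k\geq t/2$, so each factor $(1+(\xi/k-t)^2)^{\pm}$ sits in its non-resonant tail. First I would record the uniform bound $\|S_k(t_2,t_1)\|\leq C_\alpha$ supplied by Proposition \ref{proposition:hom}, so that after writing the Duhamel form \eqref{eq:system} the whole argument reduces to controlling the time-integrated coupling coefficients against the unknown.

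The second step is to estimate these coefficients on $I$. Using $f(t)\approx\epsilon\sqrt{1+t}$ and $g(t)\approx\epsilon(1+t)^{-1/2}$ from Lemma \ref{lem:Etilde} together with $|t-\xi/k|\approx t$, I expect that for all pairs not involving the $x$-average (that is $k,k\pm 1\neq 0$) both families decay integrably, $|c_k^\pm|,|d_k^\pm|\lesssim\epsilon\xi\,t^{-3/2}$, whence $\int_{2\xi}^{\infty}\epsilon\xi\,t^{-3/2}\,dt\lesssim\epsilon\sqrt\xi\lesssim\sqrt\delta$ is small on $I$. The delicate point, and the reason the modes $|k|=1$ must be singled out, is the coupling to the $x$-average $k=0$: because the average is normalized without the factor $|\p_x|^{1/2}$, the effective symbol $|\p_x|^{-1/2}\Delta_t^{-3/4}Z$ at $k=0$ equals $|\xi|^{-3/2}Z_0$, and evaluating the forcing $f(t)|\p_x|^{1/2}\Delta_t^{-1/4}(\cos(x)\p_y|\p_x|^{-1/2}\Delta_t^{-3/4}Z)$ at $k=1$ gives a coefficient driving $Z_0$ into $(Z_{\pm1},Q_{\pm1})$ of size $\approx\tfrac12 f(t)|\xi|^{-1/2}(1+(\xi-t)^2)^{-1/4}\approx\epsilon|\xi|^{-1/2}$, which crucially does not decay in $t$. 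In the reverse direction the return coefficient driving $Z_{\pm1}$ back into $Z_0$ is, by the analogous computation, $\approx f(t)|\xi|^{1/2}(1+(\xi-t)^2)^{-3/4}\approx\epsilon|\xi|^{1/2}t^{-1}$, and $Q_0$ is purely passive since the forcing in \eqref{eq:system} contains no $Q_{k\pm1}$.

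With these estimates I would close the bounds by a bootstrap organized as a short cascade $0\to\{\pm1\}\to\{|k|\geq 2\}$. First, the average modes $(Z_0,Q_0)$ are fed only by the decaying return coefficient $\epsilon|\xi|^{1/2}t^{-1}$ acting on the $|k|=1$ data; inserting the bootstrapped linear growth of the latter, the accumulated integral is $\lesssim\epsilon^2\int_{2\xi}^{t}ds\lesssim\epsilon^2 t\lesssim\delta$, so $(Z_0,Q_0)$ stay bounded by $C_\alpha\|(Z,Q)(2\xi)\|$. Second, inserting this bounded $Z_0$ into the non-decaying forcing of the $|k|=1$ equations and using $\|S_1\|\leq C_\alpha$ gives $\|1_{|k|=1}(Z,Q)(t)\|\lesssim\epsilon|\xi|^{-1/2}(t-2\xi)\|(Z,Q)(2\xi)\|=\epsilon\sqrt\xi\,(t/\xi)\|(Z,Q)(2\xi)\|$; since $\epsilon\sqrt\xi\leq\sqrt{2\delta}$ and $t/\xi\geq 2$, this is controlled by $C_{\alpha,\gamma}(t/\xi)^{\gamma+1/2}\|(Z,Q)(2\xi)\|$ for any $\gamma>\tfrac12$. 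Third, the modes $|k|\geq 2$ are coupled to the $|k|=1$ modes only through the generic decaying coefficient $\epsilon\xi\,t^{-3/2}$; feeding the linear growth $\epsilon\sqrt\xi\,(s/\xi)$ into $\int_{2\xi}^{t}\epsilon\xi\,s^{-3/2}\cdot\epsilon\sqrt\xi\,(s/\xi)\,ds\lesssim\epsilon^2\xi\sqrt{t/\xi}$ produces exactly the gain $\sqrt{t/\xi}$, and together with the uniformly integrable self-interaction and a standard energy–Gronwall estimate on the remaining terms one obtains $\|1_{|k|\neq 1}(Z,Q)(t)\|\lesssim C_\alpha\exp(10)\sqrt{t/\xi}\|(Z,Q)(2\xi)\|$.

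The main obstacle I anticipate is precisely the non-decaying coupling $\epsilon|\xi|^{-1/2}$ from the $x$-average into the first modes: a naive energy estimate bounding this term in absolute value would integrate it to $\epsilon|\xi|^{-1/2}t$ in the exponent and thus predict spurious exponential growth $\exp(\epsilon\sqrt\xi\,(t/\xi))$. The resolution is to use the directionality of the $0\leftrightarrow\{\pm1\}$ interaction — the forcing out of the average is large but the return into the average decays like $t^{-1}$ — so that the feedback loop contributes only the small factor $\epsilon^2\xi\lesssim\delta$ and $Z_0$ never leaves an $O(1)$ neighborhood of its initial value. This is what keeps the growth of the $|k|=1$ modes merely linear, i.e.\ the endpoint $\gamma=\tfrac12$, the loss to arbitrary $\gamma>\tfrac12$ arising only from the crude rounding $(t/\xi)^{1}\leq(t/\xi)^{\gamma+1/2}$; a secondary bookkeeping difficulty is to carry out the bootstrap for all modes simultaneously while keeping the smallness parameters $\epsilon\sqrt\xi$ and $\epsilon^2\xi$ uniformly $O(\sqrt\delta)$ on the whole interval $I$.
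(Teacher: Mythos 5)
Your proposal is correct and follows the paper's skeleton for the first two steps — Duhamel with the uniform semigroup bound $\|S_k\|\leq C_\alpha$ from Proposition \ref{proposition:hom}, and the same coefficient asymmetry at the $x$-average: the generic couplings are integrable of total size $O(\sqrt\delta)$, the coupling $0\to\{\pm1\}$ is non-decaying of size $\approx f(t)\xi^{-1/2}t^{-1/2}\approx\epsilon\xi^{-1/2}$, and the return coupling $\{\pm1\}\to 0$ decays like $f(t)\xi^{1/2}t^{-3/2}\approx\epsilon\xi^{1/2}t^{-1}$ (these are exactly the bounds $c_{\pm1}^{\mp}$ and $c_0^{\pm}$ in the paper). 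Where you genuinely diverge is the closure. The paper lumps mode $0$ together with all $|k|\geq 2$ modes into a single scalar $a$, sets $b=|(Z_1,Z_{-1})|$, and compares with the $2\times 2$ system $\dt(a,b)=(\tfrac{\xi}{t^2}b,\tfrac{1}{\sqrt{\xi t}}a)$; the non-integrable entry is then tamed by passing to the weighted unknown $(a,(t/\xi)^{-\gamma}b)$ with $\gamma>\tfrac12$, which renders both off-diagonal entries integrable and produces a helpful negative diagonal $-\gamma/t$, so Gronwall gives $a$ bounded and $b\lesssim(t/\xi)^\gamma$. You instead keep mode $0$ separate and exploit the directionality of the $0\leftrightarrow\{\pm1\}$ interaction in a bootstrap: $Z_0$ stays bounded because the feedback loop carries the small factor $\epsilon^2\xi\lesssim\delta$, the $\pm1$ modes then grow at most linearly with small prefactor $\epsilon\sqrt\xi\leq\sqrt{2\delta}$, and feeding this into the integrable couplings gives $\sqrt{t/\xi}$ for the remaining modes. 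Both closures yield the stated estimates (the statement allows exponent $\gamma+\tfrac12>1$ for the $\pm1$ modes, so your linear bound suffices); the paper's weighted Gronwall buys a sublinear exponent $(t/\xi)^\gamma$, $\gamma>\tfrac12$ arbitrary, i.e.\ a sharper power, while your cascade buys a small prefactor $O(\sqrt\delta)$ and makes the driving mechanism (bounded average mode forcing the first modes) more transparent.

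One bookkeeping point: in your estimate of the feedback $\{\pm1\}\to 0$ you insert only the linearly growing part of the $\pm1$ modes, giving $\epsilon^2 t\lesssim\delta$. The constant (initial-data) part of those modes, fed through the return coefficient $\epsilon\xi^{1/2}s^{-1}$, contributes an additional term $\epsilon\sqrt{\xi}\,\log\bigl(t/(2\xi)\bigr)$; writing $u=\epsilon^2\xi/\delta\in(0,2)$ this equals $\sqrt{\delta}\,\sqrt{u}\log(1/u)\cdot O(1)$ and is therefore still $O(\sqrt\delta)$ uniformly in $\xi$, so your conclusion stands, but the term should be carried along for the bootstrap to close.
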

This proposition implies a bound on the norm inflation on this time interval by
$t^{3/2}\leq \epsilon^{-3}$, which is much smaller than the exponential growth
bound expected on earlier time intervals. We further remark that this time
interval is empty if $\xi> \epsilon^{-2}$ and that this proposition is hence
only concerned with ``small'' frequencies.

\begin{proof}[Proof of Proposition \ref{proposition:longtime}]
  We recall from Proposition \ref{proposition:hom} of Section \ref{sec:homogeneous}
  that the solution operators
  \begin{align*}
    S_k(\cdot,\cdot): \mathbb{C}^2\rightarrow \mathbb{C}^2
  \end{align*}
  are bounded by a constant $C_{\alpha}$ uniformly in $k$.
  
  It thus suffices to control the corrections of \eqref{eq:system}
  \begin{align*}
    & \quad \quad  \int_{2\xi}^{t_2} S_k(t_2, t)
    \begin{pmatrix}
      c_k^{+} Z_{k+1} + c_{k}^{-} Z_{k-1} \\
      d_k^{+} Z_{k+1} + d_{k}^{-} Z_{k-1}
    \end{pmatrix} dt
   \end{align*}
  in a suitable way to invoke Gronwall's lemma, where we will distinguish
  between the case where $|k|\geq 2$ and the cases $k=-1,0,1$.

  For simplicity of presentation in the following we establish estimates in the
  unweighted space $\ell^2$.
  The case of weighted spaces with a weight $\lambda_k$ can be reduced to this
  case by considering modified coefficient functions of the form $\frac{\lambda_{k\pm 1}}{\lambda_k}
  c_{k}^{\pm}$. More precisely, for instance for Sobolev spaces we may choose $\lambda_k= 1+
  c|k|^s$, where $c$ is a small constant and hence deduce that
  $\frac{\lambda_{k\pm 1}}{\lambda_k}$ is bounded above and below by constants
  close to $1$. Hence all estimates below extend to this case with possibly a
  small loss of constants.
  
  In the following we estimate the coefficient functions.
  We observe that for $t>2\xi$ all frequencies $k\neq 0$ are non-resonant in the
  sense that
  \begin{align*}
    |\xi-kt| \geq \frac{1}{2}t\geq |\xi|.
  \end{align*}
  In particular, recalling the definition of the coefficient functions
  \eqref{eq:coeff1} as long as none of $k, k-1, k+1$ are zero,
  we may bound
  \begin{align*}
    |c_{k}^{\pm}| + |d_{k}^{\pm}|  \leq C f(t) \frac{\xi}{t^2},
  \end{align*}
  for some universal constant $C$, where we with slight abuse of notation
  estimated $g(t)\leq f(t)/t$.
  We further recall that by our choice of time interval
  \begin{align*}
    f(t) \leq \sqrt{\delta} \ll 1
  \end{align*}
  is small and note that
  \begin{align*}
    \int_{2\xi}^\infty \frac{\xi}{t^2}= \frac{1}{2}.
  \end{align*}
  Hence, for these coefficient functions we obtain uniform $L^1$
  estimate in time (more precisely, the supremum in $k$ is still in $L^1$).

  It hence only remains to discuss the cases $k\in\{-1,0,1\}$.
  Here we observe that while
  \begin{align*}
    c_{0}^{\pm} \leq C \frac{f(t)\xi^{1/2}}{t^{3/2}}
  \end{align*}
  is uniformly integrable,
  \begin{align*}
    c_{\pm 1}^{\mp} \leq C \frac{f(t)}{\xi^{1/2} t^{1/2}}
  \end{align*}
  is not.
  
  Therefore, we cannot hope for better growth estimates than for the simple ODE
  system
  \begin{align*}
    \dt
    \begin{pmatrix}
      a \\ b
    \end{pmatrix}
    =
    \begin{pmatrix}
      0 & \frac{\xi}{t^2} \\
      \frac{1}{\sqrt{\xi}\sqrt{t}} & 0
    \end{pmatrix}
    \begin{pmatrix}
      a \\ b
    \end{pmatrix}
  \end{align*}
  for $t > \xi$. Note that after rescaling we may without loss of generality set
  $\xi=1$.
  We may then introduce $1/2<\gamma<1$  and consider
  \begin{align*}
    \dt
    \begin{pmatrix}
      a \\ (\frac{t}{\xi})^{-\gamma} b
    \end{pmatrix}
    \begin{pmatrix}
      0 & \frac{\xi^{1-\gamma}}{t^{2-\gamma}} \\
      \frac{1}{\xi^{1/2-\gamma}t^{1/2+\gamma}} & -\frac{\gamma}{t}
    \end{pmatrix}
    \begin{pmatrix}
      a \\ (\frac{t}{\xi})^{-\gamma} b
    \end{pmatrix}.
  \end{align*}
  We observe that the diagonal entries are integrable in time by our choice of
  $\gamma$, while the bottom right-entry is negative.
  Hence, by Gronwall's lemma
  \begin{align*}
    |a|^2 + |(\frac{t}{\xi})^{-\gamma} b|^2
  \end{align*}
  remains uniformly bounded, which implies that $|a|$ remains bounded while $|b|$ might
  grow algebraically.

  The claimed estimate then follows with $b= |(Z_1, Z_{-1})|$ and $a=\|(Z_k)_{k \not
  \in \{-1,1\}}\|$.
\end{proof}

\subsubsection{The Small Time or High Frequency Regime}
\label{sec:small}
By the results of the preceding Section \ref{sec:longtime} any possible norm
inflation has to happen for times 
\begin{align*}
  0< t < 2\xi.
\end{align*}
Thus similarly to the setting of the Euler or Vlasov-Poisson equations we
partition this time interval into regions in which $t$ is comparable to
$\frac{\xi}{k}$ for some $k \in \N$.
\begin{defi}
  \label{defi:Ik}
  Let $\xi >0$ be given. Then for any $k \in \N$ we define
  \begin{align*}
    t_{k} &= \frac{1}{2}(\frac{\xi}{k+1}+\frac{\xi}{k}), \\
    t_0&= 2 \xi,
  \end{align*}
  and the associated time intervals
  \begin{align*}
    I_k= (t_k, t_{k-1}).
  \end{align*}
  We further define
  \begin{align*}
    k_0=\lfloor (\epsilon |\xi|)^{2/3} \rfloor.
  \end{align*}
\end{defi}
We recall from the toy model of Section \ref{sec:toy} and from the structure of
the coefficient functions $c_{k}^{\pm},d_k^{\pm}$ stated in \eqref{eq:coeff1} that on
a given time interval $I_k$ the main resonance mechanism is expected to be
determined by
\begin{align*}
  \int_{I_k} c_{k\pm1}^{\mp} \approx \frac{\epsilon \xi}{k^{3/2}} \approx \epsilon t^{3/2} \xi^{-1/2}.
\end{align*}
In particular, this value is bigger than $1$ for $k\leq k_0$ and smaller than
$1$ if $k\geq k_0+1$.
We further remark that, if $\xi$ is much bigger than $\epsilon^{-4}$ or if $t$
is small, then we expect resonances to only result in small perturbation of the dynamics,
as we prove in the following propositions.
The main resonance mechanism in the remaining time interval is then studied in
Section \ref{sec:echo}.
As a first result we note that if $\xi>\epsilon^{-4}$ is very large all permissible choices of
$k$ (that is, with $t_k<\epsilon^{-2}$) are non-resonant and stability estimates
can be obtained by a simple ode-type estimate.
\begin{proposition}[High frequency I]
  \label{proposition:largefreq}
  Let $\xi>\epsilon^{-4}$, then there exists as constant $C$ depending only on
  $\alpha$ such that for any choice of initial data it holds that
  \begin{align*}
    \|(Z,Q)(t)\| \leq \exp(C \min(t, (\epsilon \xi)^{2/3}, \epsilon^{-2})) \|(Z,Q)(0)\|.
  \end{align*}
\end{proposition}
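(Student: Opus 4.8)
The plan is to treat the nearest-neighbour coupling in \eqref{eq:system} as a perturbation of the homogeneous flow and close a simple Gronwall (ode-type) estimate, exploiting that the hypothesis $\xi>\epsilon^{-4}$ forces every admissible resonance to be weak. First I would invoke Proposition \ref{proposition:hom}: the frequency-localized solution operators $S_k(t_2,t_1)$ are bounded by a constant $C_\alpha$, uniformly in $k$ and in time, and the energy $E(t)$ is comparable to $\|(Z,Q)\|^2$ since $\alpha>\tfrac14$. Working with $E$ and differentiating, the homogeneous part contributes only the drift term of the proof of Proposition \ref{proposition:hom}, whose symbol is bounded pointwise by $1$ and whose time integral over $[0,t]$ is at most $C\min(t,1)$; the remaining contribution comes from the coupling. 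Since the coupling in \eqref{eq:system} is tridiagonal, its $\ell^2\to\ell^2$ operator norm is controlled by
\[
\mu(t):=\sup_k\big(|c_k^{+}|+|c_k^{-}|+|d_k^{+}|+|d_k^{-}|\big),
\]
so one obtains $\frac{d}{dt}E(t)\le C\big(\min(1,1)+\mu(t)\big)E(t)$ and, by Gronwall, the whole proposition reduces to the bound $\int_0^t\mu(s)\,ds\le C\min\!\big(t,(\epsilon\xi)^{2/3},\epsilon^{-2}\big)$.

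The heart of the matter is a uniform pointwise bound on $\mu(t)$ over the admissible window $0<t<\delta\epsilon^{-2}$. Here the decisive arithmetic is that $\xi>\epsilon^{-4}$ implies $t<\delta\epsilon^{-2}<\sqrt{\xi}$ for every admissible time, so the mode $k_\ast\approx\xi/t$ that is critical at time $t$ satisfies $k_\ast>\sqrt{\xi}$ and hence $\xi/k_\ast^2<1$. Recalling from the toy-model computation of Section \ref{sec:toy} that the amplitude of the coupling at the critical mode is $\approx \xi/k_\ast^2=t^2/\xi$ (this is the size of $\p_y|\p_x|^{-1/2}\Delta_t^{-3/4}$ evaluated at the critical time), and that the separation of neighbouring critical times is $\xi/(k(k\pm1))\approx\xi/k^2$, the condition $\xi/k_\ast^2<1$ means that neighbouring critical times are closer than the $O(1)$ width of the factors $(1+(\xi/k-t)^2)^{\mp3/4}$: no sharp resonant denominator ever appears, the ``resonances'' overlap rather than act in isolation, and the situation is exactly the non-resonant one anticipated in the discussion following Definition \ref{defi:Ik}. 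Using $f(t)\le C\epsilon\sqrt{1+t}$ and $g(t)\le C\epsilon(1+t)^{-1/2}$, I would show the supremum is attained near $k_\ast$ and conclude $\mu(t)\lesssim \epsilon\, t^{5/2}/\xi$, which for $\xi>\epsilon^{-4}$ and $t<\delta\epsilon^{-2}$ is bounded by $\delta^{5/2}\ll1$.

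With this pointwise bound the three terms of the minimum follow by elementary integration. The estimate $\mu(t)\le C$ gives at once $\int_0^t\mu\le Ct$; integrating the sharper bound $\mu(t)\lesssim \epsilon t^{5/2}/\xi$ up to $t\le\delta\epsilon^{-2}$ and using $\xi>\epsilon^{-4}$ gives
\[
\int_0^t\mu \lesssim \frac{\epsilon}{\xi}(\delta\epsilon^{-2})^{7/2}\lesssim \delta^{7/2}\epsilon^{-2};
\]
finally, since $(\epsilon\xi)^{2/3}\ge\epsilon^{-2}$ whenever $\xi\ge\epsilon^{-4}$, the factor $(\epsilon\xi)^{2/3}$ is automatically dominated. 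Combining these with the $O(\min(t,1))$ homogeneous drift yields $\int_0^t\mu\le C\min(t,(\epsilon\xi)^{2/3},\epsilon^{-2})$, and Gronwall closes the argument with no prefactor. The step I expect to be the main obstacle is the uniform-in-$k$ control of $\mu(t)$ in the previous paragraph, and in particular the off-resonant tails of $d_k^{\pm}$, whose factor $(1+(\xi/(k\pm1)-t)^2)^{+1/4}$ \emph{grows} away from the critical time; one must show it is defeated by the decaying source factor $(1+(\xi/k-t)^2)^{-3/4}$, a cancellation that again relies on $\xi/k^2<1$ keeping the two critical times within $O(1)$ of one another, so that the product behaves like $(1+(\xi/k-t)^2)^{-1/2}\le1$.
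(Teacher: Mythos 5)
Your proposal is correct and follows essentially the same route as the paper: both reduce the statement to a Gronwall-type estimate for the tridiagonal system \eqref{eq:system} using the uniform bound $C_\alpha$ on the homogeneous solution operators $S_k$, and both rest on the same key arithmetic that $\xi>\epsilon^{-4}$ together with $t<\delta\epsilon^{-2}$ forces $\frac{\xi}{k^2}\approx t^2\xi^{-1}<1$, so all coefficients $c_l^{\pm}, d_l^{\pm}$ stay uniformly bounded and no resonant amplification can occur. Your sharper pointwise bound $\mu(t)\lesssim \epsilon t^{5/2}/\xi\le\delta^{5/2}$ and the remark that $(\epsilon\xi)^{2/3}\ge\epsilon^{-2}$ renders that term of the minimum redundant are refinements consistent with the paper's cruder uniform bound on the coefficients.
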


\begin{proof}[Proof of Proposition \ref{proposition:largefreq}]
We recall that by Definition \ref{defi:system} we may equivalently consider a
system of integral equations
\begin{align*}
    \begin{pmatrix}
    Z_k \\ Q_k
  \end{pmatrix}(t)
  & = S_k(t,0)
    \begin{pmatrix}
      Z_k \\ Q_k
    \end{pmatrix}(0)
  \\
  & \quad  + \int_{0}^{t} S_k(t, \tau)
    \begin{pmatrix}
      c_k^{+} Z_{k+1} + c_{k}^{-} Z_{k-1} \\
      d_k^{+} Z_{k+1} + d_{k}^{-} Z_{k-1}
    \end{pmatrix}(\tau) d\tau.
\end{align*}
In particular, using the bounds on $S_{k}$ of Section \ref{sec:homogeneous} it
follows that the solution satisfies the integral inequality
\begin{align*}
  \|(Z,Q)(t)\| \leq C \|(Z,Q)(0)\| + \int_0^t C \|(Z,Q)(\tau)\| \sup_{l}(|c_{l}^{\pm}(\tau)|+|d_{l}^\pm(\tau)|) d\tau,
\end{align*}
where $C$ is a constant which may depend on $\alpha$.
The claimed bound hence follows by an application of Gronwall's inequality
provided
\begin{align}
  \label{eq:simpleGronwall}
  \sup_{l, \tau \in (0,\epsilon^{-2})}(|c_{l}^{\pm}(\tau)|+|d_{l}^\pm(\tau)|) \leq 100.
\end{align}
Indeed, we observe that for $t \in I_k$ it holds that
\begin{align*}
  |c_{l}^{\pm}| \leq |f(t)|
  \begin{cases}
    \frac{\xi}{k^2} & \text{ if } l\geq k+1, \\
    \sqrt{\frac{\xi}{k^2}} & \text { if } l \in \{k-1,k+1\}, \\
    (\frac{\xi}{k^2})^{-1/2} & \text{ if } l=k, \\
    \frac{\xi}{k^2} + \xi^{-1} & \text{ if } l\leq k-2.
  \end{cases}
\end{align*}
Here we estimated
\begin{align*}
  |\frac{\xi}{k}-\frac{\xi}{l}|\geq \frac{\xi}{k^2}
\end{align*}
for $l\neq k$ and observed that since $\xi\geq \epsilon^{-4}> 2 \epsilon^{-2}$
for $k \in \{-1,0,1\}$ we estimate $|\xi-kt|\geq \frac{1}{2}\xi$.
It thus only remains to observe that
\begin{align*}
  \frac{\xi}{k^2} = \left(\frac{\xi}{k}\right)^2 \xi^{-1} \approx t^2 \xi^{-1}<1
\end{align*}
is uniformly bounded by assumption on $\xi$ and that $f(t)< 1$ by our choice of
time interval.
The estimates on $d_{l}^{\pm}$ follow analogously by noting that
$g(t)\sqrt{1+t}$ is uniformly bounded by our choice of time interval. Thus the
estimate \eqref{eq:simpleGronwall} holds, which concludes the proof.
\end{proof}
We remark that this bound is very rough and not expected to be sharp for most
choices of $\xi$. Indeed as suggested by the model of Section \ref{sec:toy} if
$\xi$ is much larger than $\epsilon^{-4}$ we obtain no norm inflation at all.

\begin{proposition}
  \label{proposition:high2}
  Let $C_{\alpha}$ denote the operator norm of semi-group of the homogeneous
  problem, that is
  \begin{align*}
    C_{\alpha}= \sup_{l\in \Z, t,s \in \R} |S_{l}(t,s)|.
  \end{align*}
  Then for all  $\xi> 2C_{\alpha}\epsilon^{-4}$ and all $t \in (0,\epsilon^{-2})$ it
  holds that
  \begin{align*}
    \|(Z,Q)(t)\| \leq C C_{\alpha} \|(Z,Q)\|.
  \end{align*}
    The evolution is uniformly bounded.
\end{proposition}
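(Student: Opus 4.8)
The plan is to upgrade the crude Gronwall argument of Proposition \ref{proposition:largefreq} from an exponential bound to a uniform one by resolving the resonance mode-by-mode rather than through the pointwise operator norm of the coupling. First I would record the geometry: for $\xi > 2C_\alpha \epsilon^{-4}$ every resonance window $I_k$ (Definition \ref{defi:Ik}) whose center $t \approx \xi/k$ lies in $(0,\epsilon^{-2})$ satisfies $k \geq k_{\min} \approx \xi \epsilon^2 > 2C_\alpha \epsilon^{-2}$, hence $\xi/k^2 \approx t^2/\xi \leq \frac{1}{2C_\alpha} < 1$. Thus throughout the admissible time interval we sit uniformly in the weak-resonance regime, where the factor $(1+(\xi/(k\pm1)-t)^2)^{-1/4}$ in \eqref{eq:coeff1} is $\approx 1$ and the coefficients are genuinely small.

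Next I would establish the key smallness, not for $\sup_k$ but for each \emph{fixed} mode $k$. Using that $c_k^{\pm}$ and $d_k^{\pm}$ are peaked near $t = \xi/k$ with $O(1)$ width (controlled by $\int_{\R}(1+s^2)^{-3/4}\,ds < \infty$) and that $f(\xi/k) \leq C\epsilon\sqrt{\xi/k}$, one obtains $C_\alpha \int_0^{\epsilon^{-2}}(|c_k^{\pm}| + |d_k^{\pm}|)\,dt \leq C C_\alpha \epsilon \xi^{3/2} k^{-5/2}$, which is decreasing in $k$ and therefore maximized at $k = k_{\min}$, giving $C C_\alpha \epsilon^{-4}\xi^{-1} \leq \frac12$ by the frequency threshold (after possibly absorbing the implied constant into the threshold). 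Equivalently, the per-resonance transfer on each window, $\int_{I_k} c_{k\pm1}^{\mp}$, is at most of order $\epsilon \xi/k^{3/2}$ and hence below $1$ for all admissible $k$ — this is exactly the statement that the cascade product $G(\xi,\epsilon)$ of Lemma \ref{lemma:Stirling} has become non-amplifying once $\xi$ exceeds $\epsilon^{-4}$.

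With this smallness in hand I would close the estimate by iterating the Duhamel formula \eqref{eq:system}. Since the coupling is nearest-neighbor, the shift operators $Z_k \mapsto Z_{k\pm1}$ are $\ell^2$-isometries, and the propagators obey $|S_k| \leq C_\alpha$; the $n$-th term of the resulting Neumann series is a nested time integral of products $c_{k_1}^{\pm}\cdots c_{k_n}^{\pm}$ along a nearest-neighbor path. Because each coefficient is time-localized near its own resonance and integrates in $t$ to at most $\frac{1}{2C_\alpha}$, the nested integrals factorize (up to the combinatorics of the walk) into a product of such small factors, so the series converges geometrically and yields $\|(Z,Q)(t)\| \leq C C_\alpha \|(Z,Q)(0)\|$ uniformly on $(0,\epsilon^{-2})$. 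Alternatively one may run a continuity/bootstrap argument on $\sup_{s \leq t}\|(Z,Q)(s)\|$, using that each mode participates as a resonant partner in only $O(1)$ windows, so that the lossy transfers cannot accumulate.

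The step I expect to be the main obstacle is precisely the passage from the pointwise-in-time operator bound to the uniform bound. The naive quantity $\int_0^{\epsilon^{-2}}\sup_k(|c_k^{\pm}|+|d_k^{\pm}|)\,dt$ is \emph{large} — the resonant frequency $k \approx \xi/t$ sweeps across all modes as $t$ varies, so the supremum is attained at a different $k$ at every time, and one only recovers the exponential bound of Proposition \ref{proposition:largefreq}. The gain must therefore come from the time-localization of individual resonances together with the nearest-neighbor structure, and the delicate point is to control the interchange of $\sup_t$ and $\sum_k$ (different modes attain their maxima at different times), either through the combinatorial bookkeeping of the iterated Duhamel expansion or through a suitably weighted energy that charges each resonance to its own window.
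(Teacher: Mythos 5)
Your proposal is correct, and its core coincides with the paper's: the proof there rests on precisely the claim you isolate, namely the \emph{fixed-mode} bound $\sup_l \int_0^{\delta\epsilon^{-2}} |c_l^{\pm}|\,dt \leq \frac{1}{4}C_\alpha^{-1}$ (claim \eqref{eq:linftyclaim}), proved by splitting each mode's time integral into its single resonant window $I_{l\pm 1}$ — where the transfer is $\lesssim \epsilon t_l^{3/2}\xi^{-1/2}$, hence small exactly because $\xi \gtrsim \epsilon^{-4}$ — plus a non-resonant remainder of size $O(\delta)$. Your weak-resonance evaluation $\epsilon\xi^{3/2}k^{-5/2}$ is a sharper version of the same estimate (legitimate here, since $\xi/k^2<1$ for every admissible $k$), and both versions require adjusting the threshold constant, as you acknowledge. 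Where you genuinely differ is the closing step. The paper does not iterate Duhamel: it introduces the norm $\|(Z,Q)\|_{\ell^2,t} = \bigl\| \sup_{\tau\leq t} |(Z_l,Q_l)(\tau)| \bigr\|_{\ell^2}$ — the $\ell^2$ norm of mode-wise time-suprema, \emph{not} the time-supremum of $\ell^2$ norms — inserts the per-mode smallness into the Duhamel inequality mode by mode, and absorbs the resulting $\frac12\|(Z,Q)\|_{\ell^2,t}$ into the left-hand side. That choice of norm is exactly the paper's answer to the ``delicate point'' you flag: taking the supremum inside the $\ell^2$ sum makes the nearest-neighbor transfer close in one step, with no interchange of $\sup_t$ and $\sum_k$ needed. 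Your primary route via the Neumann series also works and treats the interchange even more transparently (dropping the time-ordering in the nested integrals bounds each length-$n$ path by $C_\alpha^{n+1}(4C_\alpha)^{-n}$, and the $2^n$ nearest-neighbor paths then sum geometrically), at the cost of some combinatorial bookkeeping the paper avoids. One caution: your alternative bootstrap, as literally stated on $\sup_{s\leq t}\|(Z,Q)(s)\|_{\ell^2}$, does not close by itself — the mode-wise Duhamel bound produces $\ell^2$ norms of mode-wise suprema, which dominate rather than are dominated by the sup-of-$\ell^2$ quantity — so to finish that way you must upgrade to the paper's $\ell^2$-of-sups norm (or fall back on your series argument).
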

\begin{proof}[Proof of Proposition \ref{proposition:high2}]
  We claim that for this choice of $\xi$ it holds that
  \begin{align}
    \label{eq:linftyclaim}
   \sup_{l} \int_{(0,\delta \epsilon^{-2})} |c_{l}^{\pm}| dt \leq \frac{1}{4} C_{\alpha}^{-1}.
  \end{align}
  Recalling the integral equation
  \begin{align*}
    \begin{pmatrix}
      Z_{l}\\ Q_{l}
    \end{pmatrix}(t)
    = S_{l}(t,0)
    \begin{pmatrix}
      Z_{l} \\ Q_{l}
    \end{pmatrix}(0)
    + \int_{0}^t S(t,\tau) (c_{l}^{\pm}, d_{l}^{\pm})
    \begin{pmatrix}
      Z_{l\pm 1}\\Q_{l\pm 1}
    \end{pmatrix},
  \end{align*}
  we thus deduce that
  \begin{align*}
    |(Z_l, Q_l)|(t) \leq C_{\alpha} |(Z_l, Q_l)|(0) + \frac{1}{2}\sup_{(0,t)} |(Z_{l\pm 1}, Q_{l\pm 1})|.
  \end{align*}
  In particular, we may consider the supremum in $t\leq \tau$ on both sides and
  consider (suitably weighted) $\ell^2$ norms to obtain that
  \begin{align*}
   \|(Z,Q)\|_{\ell^2, t}:= \|\sup_{\tau \leq t} |(Z_{l}, Q_l)|(\tau)\|_{\ell^2}
  \end{align*}
  satisfies
  \begin{align*}
    \|(Z,Q)\|_{\ell^2, t} \leq C_{\alpha} \|(Z, Q)(0)\|_{\ell^2} + \frac{1}{2} \|(Z,Q)\|_{\ell^2, t}.
  \end{align*}
  Since the factor $\frac{1}{2}$ on the right-hand-side is smaller than $1$ we
  may subtract it from both sides and obtain that 
  \begin{align*}
     \|(Z,Q)\|_{\ell^2, t} \leq 2 C_{\alpha} \|(Z, Q)(0)\|_{\ell^2}.
  \end{align*}
  Finally we observe that 
  \begin{align*}
    \sup_{\tau\leq t} \|(Z_l,Q_l)(\tau)\|_{\ell^2} \leq \|(Z,Q)\|_{\ell^2,t}
  \end{align*}
  and that for a time independent function (such as $(Z,Q)(0)$) equality holds.

  It thus only remains to establish the estimates \eqref{eq:linftyclaim}. Indeed
  we observe that if $I_{l\pm 1} \subset (0,\delta \epsilon^{-2})$ then
  \begin{align*}
    \int_{I_{l\pm 1}} |c_{l}^{\pm}| dt \leq f(t_l) \frac{\xi}{l^{3/2}}\leq \epsilon (t_l)^{3/2} \xi^{-1/2} \leq \epsilon^{-2} \xi^{-2} \leq \frac{1}{4C_{\alpha}}  
  \end{align*}
  On the remaining interval and for all other $l$ we may estimate $f(t)\leq \delta$ and observe that
  \begin{align*}
    \int_{(0,\delta \epsilon^{-2})\setminus I_l} \xi \frac{1}{(l^2+(\xi-lt)^2)^{1/4}} \frac{1}{((l\pm 1)^2 +(\xi-(l\pm 1)t)^2)^{3/4}} \leq 10.
  \end{align*}
  and that for $l\neq 0$
  \begin{align*}
    & \quad \int_{I_l} \xi \frac{1}{(l^2+(\xi-lt)^2)^{1/4}} \frac{1}{((l\pm 1)^2 +(\xi-(l\pm 1)t)^2)^{3/4}} \\
    &\leq |\frac{\xi}{l^2}|^{-1/2} \int_{I_l} \frac{1}{(1+(\frac{\xi}{l}-t)^2)^{1/4}} \\
    &\leq 2 |\frac{\xi}{l^2}|^{-1/2}|\frac{\xi}{l^2}|^{+1/2}\leq 2.
  \end{align*}
  The case $l=0$ is estimated analogously.
\end{proof}

The same method of proof can also be applied for general $\xi$ when restricting
to suitably small times.
\begin{proposition}[The small time regime]
  \label{proposition:smalltime}
  Let $\xi<\epsilon^{-4}$ and define $T< \epsilon^{-2}$ such that
  \begin{align*}
    \epsilon T^{3/2} \xi^{-1/2} = \frac{1}{4C_\alpha}.
  \end{align*}
  Further suppose that $\delta< \frac{1}{C_{\alpha}}$ with $C_{\alpha}$ as in
  Proposition \ref{proposition:high2}.
  
  Then for all $0\leq t \leq \min(T,\delta \epsilon^{-2})$ it holds that
  \begin{align*}
    \|(Z,Q)(t)\|_{\ell^2} \leq C \|(Z,Q)\|_{\ell^2}.
  \end{align*}
\end{proposition}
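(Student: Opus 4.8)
The plan is to run the \emph{same} Picard/bootstrap argument as in the proof of Proposition \ref{proposition:high2}, the only difference being that the smallness of the time-integrated coefficients is now bought by truncating the time horizon at $T$ rather than by assuming $\xi$ large. First I would pass from the differential system to the integral formulation of Definition \ref{defi:system} and use the uniform bound $C_\alpha=\sup_{l,t,s}|S_l(t,s)|$ on the homogeneous solution operators established in Proposition \ref{proposition:hom}. Setting, as there,
\[
  \|(Z,Q)\|_{\ell^2,t}:=\big\|\sup_{\tau\le t}|(Z_l,Q_l)(\tau)|\big\|_{\ell^2(\Z)},
\]
taking the supremum over $\tau\le t$ in the integral equation and then the $\ell^2$ norm in $l$ yields
\[
  \|(Z,Q)\|_{\ell^2,t}\le C_\alpha\|(Z,Q)(0)\|_{\ell^2}+\tfrac12\|(Z,Q)\|_{\ell^2,t},
\]
provided the key coefficient bound
\[
  \sup_{l}\int_0^{\min(T,\delta\epsilon^{-2})}\big(|c_l^{\pm}|+|d_l^{\pm}|\big)\,dt\le \tfrac14 C_\alpha^{-1}
\]
holds, where the factor $\tfrac12$ arises from the two nearest-neighbour terms and the shift-invariance $\|\sup_\tau|(Z_{l\pm1},Q_{l\pm1})|\|_{\ell^2}=\|(Z,Q)\|_{\ell^2,t}$. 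Absorbing the last term into the left-hand side then gives $\|(Z,Q)\|_{\ell^2,t}\le 2C_\alpha\|(Z,Q)(0)\|_{\ell^2}$, which is the asserted estimate with $C=2C_\alpha$.

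The entire work of the proof thus lies in the coefficient bound, and this is precisely where the defining relation for $T$ enters. For a fixed mode $l$ the coefficient $c_l^{\pm}(t)$ is, by \eqref{eq:coeff1}, sharply concentrated around its resonant time $t\approx\xi/(l\pm1)\approx \xi/l$, and, as recorded in Section \ref{sec:small} and in the toy model of Lemma \ref{lemma:toymodel}, the contribution of that single resonance is of size $\int_{I_l}|c_l^{\pm}|\approx \epsilon\,\xi\,l^{-3/2}\approx \epsilon\, t^{3/2}\xi^{-1/2}$ with $t\approx\xi/l$. The decisive structural point---exactly as in Proposition \ref{proposition:high2}---is that we take a \emph{supremum} over $l$ and not a sum, so that for each fixed mode only its own resonance is active. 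The resonances falling inside the truncated interval are exactly those with $\xi/l\lesssim \min(T,\delta\epsilon^{-2})\le T$, i.e.\ $l\gtrsim \xi/T$, and over this range $\epsilon\,\xi\,l^{-3/2}$ is increasing as $l$ decreases; it is therefore maximal at the smallest admissible $l\approx\xi/T$, where it equals at most $\epsilon\,T^{3/2}\xi^{-1/2}$. By the very definition of $T$ this equals $\tfrac14 C_\alpha^{-1}$, and truncating at $\min(T,\delta\epsilon^{-2})$ only decreases it.

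It then remains to show that the off-resonant tails $\int_{(0,\min(T,\delta\epsilon^{-2}))\setminus I_l}(|c_l^{\pm}|+|d_l^{\pm}|)\,dt$, the exceptional modes $l\in\{-1,0,1\}$, and the $d_l^{\pm}$ terms do not spoil this bound. These are the computations already carried out in Proposition \ref{proposition:high2}: the scalar resonance integrals obey $\int_{I_l}\xi(l^2+(\xi-lt)^2)^{-1/4}((l\pm1)^2+(\xi-(l\pm1)t)^2)^{-3/4}\,dt\le 2$ and $\int_{\R\setminus I_l}(\cdots)\,dt\le 10$, so that together with the smallness of $f(t)$ and $g(t)\sqrt{1+t}$ on $(0,\delta\epsilon^{-2})$ (which are $O(\sqrt\delta)$ by the time restriction) and the hypothesis $\delta<1/C_\alpha$, every off-resonant and exceptional contribution can be absorbed, at the cost of enlarging the absolute constant in the threshold, which only changes the final $C$. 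The $d_l^{\pm}$ terms, carrying $g(t)$ in place of $f(t)$, are smaller than the $c_l^{\pm}$ by a further factor $g/f\approx(1+t)^{-1}$ at the resonance and hence contribute only lower-order corrections.

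The main obstacle I anticipate is the bookkeeping in this last step, namely verifying that truncating at $t\le \min(T,\delta\epsilon^{-2})$ genuinely activates at most one resonance of appreciable size. This is exactly the effect of the frequency cut-off analysed in Lemma \ref{lemma:Stirling}: restricting the evolution cuts the product $\prod_{k}\epsilon\xi k^{-3/2}$ off before it can begin to grow, so that the supremum over modes---as opposed to the product over the full echo chain studied in Section \ref{sec:echo}---remains controlled by the single largest factor. Once the coefficient bound is made quantitative exactly as in Proposition \ref{proposition:high2}, the bootstrap closes and the proof is complete.
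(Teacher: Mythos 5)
Your proposal is correct and takes essentially the same approach as the paper: the paper's proof likewise reduces the statement to the coefficient bound $\sup_l\int_0^T(|c_l^{\pm}|+|d_l^{\pm}|)\,dt\le\tfrac14 C_\alpha^{-1}$ and then reruns the bootstrap of Proposition \ref{proposition:high2}, verifying the bound exactly as you do---resonances with $t_k\le T$ contribute at most $\epsilon t_k^{3/2}\xi^{-1/2}\le\epsilon T^{3/2}\xi^{-1/2}=\tfrac14 C_\alpha^{-1}$ by the defining relation for $T$, while not-yet-resonant and off-resonant pieces are bounded by $\delta$. No gaps to report.
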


\begin{proof}[Proof of Proposition \ref{proposition:smalltime}]
  We claim that for this choice of $T$ it holds that
  \begin{align*}
    \int_0^T |c_{l}^\pm| \leq \frac{1}{4C_\alpha}, \\
    \int_0^T |d_{l}^\pm| \leq \frac{1}{4C_\alpha}.
  \end{align*}
  The result then follows by the same argument as in the proof of Proposition  \ref{proposition:high2}.

  Indeed, we observe that for $k$ such that $t_{k}\leq T$ (that is for all $k$
  larger than $k_1$ with $t_{k_1}\approx T$) it holds that
  \begin{align*}
    \int_{I_{l \pm 1}} |c_{l}^\pm| \leq \epsilon \frac{\xi}{k^{3/2}} \leq \epsilon t_{k}^{3/2} \xi^{-1/2} \leq \frac{1}{4C_{\alpha}}.
  \end{align*}
  If instead $k$ is such that $t_{k} < T$ (or if we integrate over
  $(0,T)\setminus I_{l\pm1}$) then integral is not (yet) resonant and
  hence
  \begin{align*}
    \int_{0}^T |c_{l}^\pm| \leq \delta \leq \frac{1}{4C_{\alpha}}.
  \end{align*}
\end{proof}

For times larger than $T$ resonances are possibly very large and thus the
preceding argument does not work anymore, since estimates of the form 
\begin{align*}
  \|(Z,Q)\|_{\ell^2, t} \leq C + 2  \|(Z,Q)\|_{\ell^2, t},
\end{align*}
do not control the norm.
In the following Section \ref{sec:echo} we thus instead establish growth bounds
on each interval $I_k$ which mimic the growth of the toy model of Section
\ref{sec:toy} with slight changes to the exponent.

\subsubsection{Main Echo Chains}
\label{sec:echo}
In this section we consider the main norm inflation mechanism of the (simplified)
linearized Boussinesq equations as compared to the
toy model of Section \ref{sec:toy}.
Here, similarly to the Euler setting \cite{dengZ2019}, it turns out for large
frequencies the back-coupling between resonant and non-resonant modes results in
correction of the growth bounds, which has to be taken into account.
As we discuss in Section \ref{sec:technical} the following results remain valid
for the non-simplified linearized Boussinesq equations as well.

As a preliminary step we consider a more accurate toy model and establish
more accurate bounds
\begin{lemma}
  \label{lemma:odemodel}
  Let $\frac{\xi}{k^2} \geq 100$ be given, let $0 \leq f(t)< \delta$ and consider the differential inequalities
  \begin{align*}
    |\dt Z_{NR}| &\leq f(t) \sqrt{\frac{\xi}{k^2}} \frac{1}{(1+t^2)^{3/4}} |Z_{R}|,  \\
    |\dt Z_{R}| &\leq f(t) \left( \frac{\xi}{k^2}\right)^{-1/2} \frac{1}{(1+t^2)^{1/4}} |Z_{NR}|,  
  \end{align*}
  on the interval $(-\frac{\xi}{k^2}, \frac{\xi}{k^2})$.
  Then there exists a constant $0 < \gamma< 2\delta$ such that
  \begin{align*}
    |Z_{NR}(+\frac{\xi}{k^2})- Z_{NR}(-\frac{\xi}{k^2})| &\leq \|f\|_{L^\infty}  \left(\frac{\xi}{k^2}\right)^{1/2+\gamma} \left(|Z_{R}(-\frac{\xi}{k^2})| 
                                                           +\|f\|_{L^\infty}|Z_{NR}(-\frac{\xi}{k^2})|\right) ,\\
    |Z_{R}(+\frac{\xi}{k^2})- Z_{R}(-\frac{\xi}{k^2})| &\leq \|f\|_{L^\infty}\left(\frac{\xi}{k^2}\right)^{1/2+\gamma} \left(|Z_{NR}(-\frac{\xi}{k^2})|  + \|f\|_{L^\infty} |Z_{R}(-\frac{\xi}{k^2})|\right).    
  \end{align*}
  That is, we obtain an upper bound on the norm inflation by
  $\|f\|_{L^\infty}  (\frac{\xi}{k^2})^{1/2+\gamma}$.
\end{lemma}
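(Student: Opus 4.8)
The plan is to symmetrize the system with a time-dependent weight chosen so that the effective coupling between $Z_{NR}$ and $Z_{R}$ becomes the \emph{geometric mean} of the two coefficients. Write $M:=\frac{\xi}{k^2}\geq 100$ for brevity. The reason a weight is needed is that a naive energy estimate is misleading: integrating the two differential inequalities directly produces the factor $\int_{-M}^{M}(1+t^2)^{-1/4}\,dt\approx 4\sqrt{M}$ in the $Z_{R}$-equation, hence only the exponential bound $\exp(c\|f\|_{L^\infty}\sqrt{M})$, which is vastly worse than the claimed polynomial rate $M^{1/2+\gamma}$. The correct normalization replaces this by the much smaller $\int_{-M}^{M}(1+t^2)^{-1/2}\,dt\approx 2\log M$.

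Concretely, I would introduce the weighted energy
\[
  \tilde E(t)=\frac{(1+t^2)^{1/4}}{\sqrt M}\,|Z_{NR}|^2+\frac{\sqrt M}{(1+t^2)^{1/4}}\,|Z_{R}|^2=:\lambda(t)|Z_{NR}|^2+\mu(t)|Z_{R}|^2,
\]
where $\lambda\mu\equiv 1$ and the weights are tuned so that $\lambda\sqrt M(1+t^2)^{-3/4}=\mu M^{-1/2}(1+t^2)^{-1/4}=(1+t^2)^{-1/2}$. Using the differential inequalities and $2|Z_{NR}||Z_{R}|\leq\tilde E$ (from $\lambda\mu=1$), together with $\dot\lambda|Z_{NR}|^2=\tfrac{t}{2(1+t^2)}\lambda|Z_{NR}|^2$ and the analogous identity for $\mu$, one finds
\[
  \frac{d}{dt}\tilde E\leq\Bigl(\frac{|t|}{2(1+t^2)}+2f(1+t^2)^{-1/2}\Bigr)\tilde E .
\]
Gronwall's lemma then gives $\tilde E(M)\leq \tilde E(-M)\,(1+M^2)^{1/2}\exp\!\bigl(4\|f\|_{L^\infty}\,\mathrm{arcsinh}(M)\bigr)$, i.e. the gauge factor $M$ from the first (diagonal) term times the genuine growth $M^{2\|f\|_{L^\infty}}$ from the coupling. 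Since $\lambda(\pm M)=\mu(\pm M)=(1+M^{-2})^{\pm1/4}\approx 1$ for $M\geq 100$, we have $\tilde E(\pm M)\approx |Z_{NR}(\pm M)|^2+|Z_{R}(\pm M)|^2$, so in the norm this yields the a priori pointwise bounds $\sup_{[-M,M]}|Z_{NR}(t)|\lesssim M^{1/2+\gamma}(|Z_{NR}(-M)|+|Z_{R}(-M)|)$ and $|Z_{R}(t)|\lesssim M^{-1/4}(1+t^2)^{1/4}M^{1/4+\gamma}(|Z_{NR}(-M)|+|Z_{R}(-M)|)$, with $\gamma$ a fixed multiple of $\|f\|_{L^\infty}<\delta$; constants and the logarithmic factor are absorbed by enlarging $\gamma$ slightly, keeping $\gamma<2\delta$ thanks to the strict room $2\|f\|_{L^\infty}<2\delta$ and $M\geq 100$.

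To pass from the magnitude bound to the precise \emph{change} estimate with its leading $\|f\|_{L^\infty}$ factor, I would feed these a priori bounds into the Duhamel representation. Writing $Z_{NR}(M)-Z_{NR}(-M)=\int_{-M}^{M}\dt Z_{NR}\,dt$ and substituting $Z_{R}(s)=Z_{R}(-M)+\int_{-M}^{s}\dt Z_{R}$, the term carrying $Z_{R}(-M)$ contributes $|Z_{R}(-M)|\int_{-M}^{M}|a|\leq C_1\|f\|_{L^\infty}\sqrt M\,|Z_{R}(-M)|$ (since $\int(1+t^2)^{-3/4}$ converges), which is the announced leading term $\|f\|_{L^\infty}M^{1/2+\gamma}|Z_{R}(-M)|$. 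The second Duhamel iterate yields the self-coupling remainder $\int_{-M}^{M}a(s)\int_{-M}^{s}b(r)Z_{NR}(r)\,dr\,ds$; here one must \emph{not} bound crudely by the product of $L^1$-norms (that loses an extra $\sqrt M$), but exploit that the weight $(1+t^2)^{-3/4}$ in $a$ concentrates near $t=0$ where $|Z_{NR}(r)|$ still carries the decaying factor $(1+r^2)^{-1/4}$, so the inner integral contributes only $\int(1+r^2)^{-1/2}\approx\log M$. This produces the self-term $\|f\|_{L^\infty}^2 M^{1/2+\gamma}|Z_{NR}(-M)|$, and the bound for $Z_{R}(M)-Z_{R}(-M)$ follows by the symmetric computation.

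The main obstacle is the first step: the choice of the balancing weight $(1+t^2)^{1/4}$ converting the mismatched coefficients $\sqrt M(1+t^2)^{-3/4}$ and $M^{-1/2}(1+t^2)^{-1/4}$ into the single geometric-mean coupling $f(1+t^2)^{-1/2}$, whose $\log M$ integral is what turns the catastrophic $\exp(\sqrt M)$ of the unweighted estimate into the polynomial $M^\gamma$. The secondary subtlety is the back-coupling in the self-term: a plain sup-norm bootstrap fails to close because its feedback coefficient $\|f\|_{L^\infty}^2\sqrt M$ need not be small, so the refined $\|f\|_{L^\infty}^2 M^{1/2+\gamma}$ bound genuinely requires the spatial concentration of $(1+t^2)^{-3/4}$ in the second Duhamel iterate rather than a product of integral norms.
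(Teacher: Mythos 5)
Your proposal is correct and takes essentially the same approach as the paper: there, too, the key step is a symmetrizing time-dependent weight that turns the mismatched couplings into the geometric mean $f(t)(1+t^2)^{-1/2}$, whose logarithmic time integral yields the polynomial factor $\left(\frac{\xi}{k^2}\right)^{\gamma}$ via Gronwall (the paper uses a piecewise-monotone weight, decreasing on each half-interval, which avoids your gauge factor $(1+M^2)^{1/2}$ but produces the same pointwise bounds). The paper's second step is your bootstrap in slightly different clothing: it first bounds the change of $Z_R$ by the fundamental theorem of calculus to gain the extra factor of $\|f\|_{L^\infty}$, then reinserts this into the $Z_{NR}$ difference, which is precisely your second Duhamel iterate exploiting the decay of the weighted bound near the resonance.
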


  We remark that if $f(t)$ is replaced by a constant and if we consider a
  differential equation instead of an inequality, then the ODE system
    \begin{align*}
    \dt
    \begin{pmatrix}
      u \\v
    \end{pmatrix}
    = f
    \begin{pmatrix}
    0 & (\frac{\xi}{k^2})^{1/2}\frac{1}{(1+t^2)^{3/4}} & \\
    (\frac{\xi}{k^2})^{-1/2} \frac{1}{(1+t^2)^{1/4}} & 0  
    \end{pmatrix}
    \begin{pmatrix}
      u \\ v
    \end{pmatrix}
    \end{align*}
    can be solved explicitly by noting that $u$ solves
    \begin{align*}
    (1+t^2)^{3/4} \dt (1+t^2)^{1/4} \dt u = (1+t^2) \dt^2 u + \frac{1}{2}t \dt u = f^2 u.
    \end{align*}
    This is the defining equation of Legendre functions and the above
    estimates hence follow from the known asymptotics of these functions.

    The main aim of this lemma is thus to provide a more robust energy-based
    proof, which also extends to differential inequalities with time-dependent
    coefficients.

    \begin{proof}[Proof of Lemma \ref{lemma:odemodel}]
      We consider the following energy:
    \begin{align*}
      \mathcal{E}(t):=
      \begin{cases}
        |\frac{(1+t^2)^{1/4}}{\sqrt{\xi/k^2}} Z_{NR}|^2 + |Z_R|^2 & \text{ if } t< 0, \\
        |\frac{1}{\sqrt{\xi/k^2}} Z_{NR}|^2 + |(1+t^2)^{-1/4}Z_{R}|^2 & \text{ if } t>0.
      \end{cases}
    \end{align*}
    We note that $\mathcal{E}(t)$ is continuous and that both $(1+t^2)^{1/2}$ and $(1+t^2)^{-1/2}$ are decreasing on the
    respective time intervals.
    Hence by direct computation 
    \begin{align*}
      \dt \mathcal{E}(t) \leq f(t) (1+t^2)^{-1/2} \mathcal{E}(t).
    \end{align*}
    Integrating this inequality on $(-\frac{\xi}{k^2}, \frac{\xi}{k^2})$ we
    obtain that
    \begin{align*}
      \mathcal{E}(T) \leq \exp\left(\int_{-\frac{\xi}{k^2}}^{T} f(t) (1+t^2)^{-1/2}\right) \mathcal{E}(-\frac{\xi}{k^2}). 
    \end{align*}
    for all $T\in [-\frac{\xi}{k^2}, \frac{\xi}{k^2}]$.
    In particular, it holds that
    \begin{align*}
      \mathcal{E}(t)\leq \left(\frac{\xi}{k^2}\right)^\gamma  \mathcal{E}\left(-\frac{\xi}{k^2}\right).
    \end{align*}
    and by construction
    \begin{align*}
      \mathcal{E}(-\frac{\xi}{k^2}) \approx |Z_{NR}(-\frac{\xi}{k^2})|^2 + |Z_R(-\frac{\xi}{k^2})|^2.
    \end{align*}
    We have thus established upper bounds for general initial data.
   
    We next consider the differences compared to the initial data.
    By the fundamental theorem of calculus for any $-\frac{\xi}{k^2}\leq \tau
    \leq \frac{\xi}{k^2}$ it holds that 
    \begin{align*}
      |Z_{NR}(\tau) - Z_{NR}(-\frac{\xi}{k^2})| &\leq \int_{-\frac{\xi}{k^2}}^{\tau} f(t) (1+t^2)^{-3/4} |Z_R| \\
      & \leq \int_{-\frac{\xi}{k^2}}^{\tau} f(t) ((1+t^2)^{-1/2}1_{t<0} +   (1+t^2)^{-3/4}1_{t>0}) \sqrt{E}(t) dt \\
      & \leq \|f(t)\|_{L^\infty} (1+(\frac{\xi}{k^2})^2)^{\gamma} \sqrt{E}\left(-\frac{\xi}{k^2}\right) \\
      & \leq \|f(t)\|_{L^\infty} (1+(\frac{\xi}{k^2})^2)^{\gamma} (|Z_R(-\frac{\xi}{k^2})|+ |Z_{NR}(-\frac{\xi}{k^2})|).
    \end{align*}
    This is almost the desired bound except that we are still missing one factor
    of $\|f\|_{L^\infty}$.
    It is however already sufficient to estimate $Z_R$ as
    \begin{align*}
      |Z_R(\tau) - Z_R(-\frac{\xi}{k^2})| &\leq  \|f(t)\|_{L^\infty}  \int_{-\frac{\xi}{^2}}^\tau (1+t^2)^{-1/4} (|Z_{NR}(t)- Z_{NR}(-\frac{\xi}{k^2})|+ |Z_{NR}(\frac{\xi}{k^2})|)dt \\
      &\leq  \|f(t)\|_{L^\infty} \left( \frac{\xi}{k^2} \right)^{1/2} (|Z_{NR}(t)- Z_{NR}(-\frac{\xi}{k^2})|+ |Z_{NR}(\frac{\xi}{k^2})|). 
    \end{align*}
    Finally we may return to the bound for $Z_{NR}$ and split
    \begin{align*}
      |Z_{NR}(\tau) - Z_{NR}(-\frac{\xi}{k^2})| &\leq \int_{-\frac{\xi}{k^2}}^{\tau} f(t) (1+t^2)^{-3/4} (|Z_R(t)-Z_R(-\frac{\xi}{k^2})|+ |Z_R(\frac{\xi}{k^2})|
    \end{align*}
    and insert the just derived bound.
\end{proof}

Having established this improved model we next show that also the (simplified) linearized
Boussinesq equations exhibit this modified growth (as compared to the toy model
of Section \ref{sec:toy}; the non-simplified equations are studied in
Proposition \ref{proposition:mainresfull}).
Here in addition to the above growth bounds we have to take into account the
evolution by the homogeneous semigroup (see Section \ref{sec:homogeneous}).
We further recall that for the linearized Boussinesq system
\begin{align*}
  |f(t)|\approx \epsilon \sqrt{\frac{\xi}{k}}.
\end{align*}

\begin{proposition}
  \label{proposition:mainres}
  Let $0< \xi < \epsilon^{-4}$, $\alpha>\frac{1}{4}$ and $0<\epsilon<\delta$ and consider the linearized Boussinesq
  equations \eqref{eq:system} on the time interval
  \begin{align*}
    I_k=(t_{k}, t_{k-1})
  \end{align*}
  with $1\leq k \leq k_0$ and $t_{k}, t_{k-1}$ as in Definition
  \ref{defi:Ik}.
  Then there exists $C=C(\alpha)$ and $0<\gamma<\delta$ such that for all
  choices of data at time $t_{k}$ and
  all $t\in \overline{I_k}$ it holds that
  \begin{align*}
    \left\| (Z,Q) (t) \right\|_{\ell^2} \leq C \epsilon (\frac{\xi}{k})^{1/2} (\frac{\xi}{k^2})^\gamma \left\| (Z,Q) (t_k) \right\|_{\ell^2}.
  \end{align*}
\end{proposition}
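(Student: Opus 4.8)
The plan is to analyze \eqref{eq:system} on the single interval $I_k$ as a bounded perturbation of the homogeneous semigroup and to reduce the resonant interaction to the scalar model of Lemma~\ref{lemma:odemodel}. I would first recall from Proposition~\ref{proposition:hom} that each $2\times 2$ operator $S_k(t,s)$ is bounded by a constant $C_\alpha$ uniformly in $k,t,s$, so that passing to the interaction picture $(\tilde Z_k,\tilde Q_k)(t):=S_k(t_k,t)(Z_k,Q_k)(t)$ only costs factors of $C_\alpha$. In these variables the evolution is driven purely by the coupling coefficients \eqref{eq:coeff1}. On $I_k$ the unique resonant mode is $k$, since $\tfrac{\xi}{k}-t$ is small there while $\tfrac{\xi}{m}-t$ stays comparable to $\tfrac{\xi}{k^2}\ge 100$ for every $m\ne k$; recentering time by $s=t-\tfrac{\xi}{k}$ turns $I_k$ into (approximately) $(-\tfrac{\xi}{2k^2},\tfrac{\xi}{2k^2})$. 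The dominant interaction is then the exchange between the resonant block $(Z_k,Q_k)$ and the neighbour $(Z_{k-1},Q_{k-1})$ through $c_{k-1}^{+}$ and $c_k^{-}$ (with the companions $d_{k-1}^{+},d_k^{-}$), and everything else I would treat as an error.

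Next I would match this dominant interaction to Lemma~\ref{lemma:odemodel}. Bounding the non-resonant factors $(1+(\tfrac{\xi}{k-1}-t)^2)^{\mp1/4}$ and $(1+(\tfrac{\xi}{k-1}-t)^2)^{-3/4}$ by the corresponding powers of $\tfrac{\xi}{k^2}$, the pair $(c_{k-1}^{+},c_k^{-})$ acquires exactly the asymmetric structure of Lemma~\ref{lemma:odemodel} with stretch parameter $\tfrac{\xi}{k^2}\ge100$ and effective amplitude $F\approx\epsilon(\tfrac{\xi}{k})^{1/2}$, so that $\int_{I_k}|c_{k-1}^{+}|\approx\epsilon\tfrac{\xi}{k^{3/2}}$ as recorded in Section~\ref{sec:small}. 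Here I would check the smallness hypothesis $F<\delta$: on $I_k$ one has $t\approx\tfrac{\xi}{k}$, and throughout the admissible window $t\lesssim\delta^2\epsilon^{-2}$ this forces $\epsilon(\tfrac{\xi}{k})^{1/2}\lesssim\delta$. Feeding the $S_k$-conjugated couplings into the weighted energy $\mathcal E$ of Lemma~\ref{lemma:odemodel}, with $(Z_k,Q_k)$ in the role of $Z_R$ and $(Z_{k-1},Q_{k-1})$ in that of $Z_{NR}$, then gives growth of the transferred amplitude by $F\,(\tfrac{\xi}{k^2})^{1/2+\gamma_0}=\epsilon(\tfrac{\xi}{k})^{1/2}(\tfrac{\xi}{k^2})^{\gamma}$, with $\gamma=\tfrac12+\gamma_0$ and $\gamma_0$ as small as desired by shrinking $\delta$ (the slight modification of the toy-model exponent anticipated after Lemma~\ref{lemma:toymodel}).

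It then remains to dispose of the discarded terms and to upgrade to the full norm. The couplings into $k+1$ and $k-2$, together with the part of the $(k,k-1)$ interaction away from the peak $s\approx0$, are non-resonant on all of $I_k$; their symbols are integrable in $s$ with $L^1$-mass controlled by a small multiple of $\|f\|_{L^\infty}+\|g\|_{L^\infty}$ (using $\tfrac{\xi}{k^2}\ge100$), so by Gronwall they only enter the constant $C$ and the deviation $\gamma_0$. The $Q$-forcing $d$-terms carry different powers of $(1+s^2)$, but the same weighted-energy bookkeeping absorbs them. Finally, since the growth factor $C\epsilon(\tfrac{\xi}{k})^{1/2}(\tfrac{\xi}{k^2})^{\gamma}$ is $\ge1$ for $1\le k\le k_0$ and dominates the uniform bound $C_\alpha$ governing every non-resonant mode, the estimate for the resonant block passes to the full $\ell^2(\Z)$ norm.

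The hard part will be the interface between the order-one homogeneous mixing of $Z_k$ and $Q_k$ and the asymmetric, scale-dependent weighting that drives the resonance. Lemma~\ref{lemma:odemodel} is stated for a scalar resonant and a scalar non-resonant amplitude, whereas here the forcing enters only through the $Z$-components while $S_k$ rotates $(Z_k,Q_k)$ by an $O(1)$ angle; one must therefore construct a single functional that is at once coercive --- which is exactly where $\alpha>\tfrac14$ and the positive-definiteness of the bilinear form in Proposition~\ref{proposition:hom} are used --- and compatible with the $(1+s^2)^{\pm1/4}$ weights, and then check that the $d$-couplings, whose $(1+s^2)$-powers differ from those of the $c$-couplings, cannot conspire to exceed $(\tfrac{\xi}{k^2})^{1/2+\gamma_0}$.
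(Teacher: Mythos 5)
Your overall strategy coincides with the paper's: absorb the homogeneous dynamics at cost $C_\alpha$ (you via the interaction picture, the paper via the per-mode coercive energy $E_l$ of Proposition \ref{proposition:hom} corrected by an integrating factor to $\tilde{E}_l$, which is exactly the functional you worry about in your last paragraph), then reduce the resonant interval to the two-speed model of Lemma \ref{lemma:odemodel} with amplitude $\|f\|_{L^\infty}\approx \epsilon(\xi/k)^{1/2}\leq \delta$ and stretch parameter $\xi/k^2$, yielding the factor $\epsilon(\xi/k)^{1/2}(\xi/k^2)^{1/2+\gamma_0}$.

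However, there is a genuine gap in your decomposition: you treat the coupling into mode $k+1$ as a non-resonant error whose $L^1$-mass is a small multiple of $\|f\|_{L^\infty}$. That is false. The coefficient feeding mode $k+1$ from the resonant mode $k$ is $c_{k+1}^{-}$, which by \eqref{eq:coeff1} carries the factor $(1+(\xi/k-t)^2)^{-3/4}$ attached to the \emph{resonant} frequency; on $I_k$ it is exactly as large as $c_{k-1}^{+}$ (the paper's notation $c_{k\pm 1}^{\mp}$ designates precisely these two coefficients), of size $f(t)\,(\xi/k^2)^{1/2}(1+(t-\xi/k)^2)^{-3/4}$, so that $\int_{I_k}|c_{k+1}^{-}|\approx \epsilon\,\xi/k^{3/2}\geq 1$ for $k\leq k_0$, as recorded in Section \ref{sec:small}. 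What \emph{is} small on $I_k$ is the back-coupling $c_{k}^{+}$ from mode $k+1$ into mode $k$ — perhaps the coefficient you had in mind — but that is a different term. Consequently mode $k+1$ undergoes the full inflation by $\approx\epsilon(\xi/k)^{1/2}(\xi/k^2)^{1/2}$ during $I_k$, driven by the bounded source $Z_k$, and your Gronwall absorption of the error block (which presumes every coefficient feeding that block is integrably small) would assert that $Z_{k+1}$ grows only by a constant, which is simply not true. The repair is exactly the paper's grouping: put \emph{both} neighbors into the receiving block, $Z_{NR}$ built from $\tilde{E}_{k+1}$ and $\tilde{E}_{k-1}$, and all remaining modes (including the resonant mode $k$) into $Z_R$, so that both large coefficients $c_{k\pm1}^{\mp}$ appear as the $(1+t^2)^{-3/4}$-transfer in Lemma \ref{lemma:odemodel} and all back- and within-block couplings are of the small $(1+t^2)^{-1/4}$-type. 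With that symmetric grouping the remainder of your argument, including the passage to the full $\ell^2$ norm since the inflation factor dominates $C_\alpha$, goes through as you describe.
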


\begin{cor}
  \label{cor:iterated}
  Under the same assumptions as in Proposition \ref{proposition:mainres} for any
  $l \leq k_0$ it holds that
  \begin{align*}
    \left\| (Z,Q) (t_l) \right\|_{\ell^2} &\leq \left\| (Z,Q) (t_{k_0}) \right\|_{\ell^2}\prod_{l\leq k \leq k_0}  C \epsilon \left(\frac{\xi}{k}\right)^{1/2} \left(\frac{\xi}{k^2}\right)^{\gamma}
  \end{align*}
  Thus the total possible norm inflation on $(t_{k_0}, \delta \epsilon^{-2})$ is
  bounded by the exponential factor stated in Theorem \ref{theorem:main}.
\end{cor}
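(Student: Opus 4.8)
The plan is to read the displayed product bound as a straightforward iteration of Proposition \ref{proposition:mainres} across the consecutive intervals $I_k$, and then to show that the resulting product is controlled by the same Stirling estimate already carried out in Lemma \ref{lemma:Stirling}, now carrying the exponent $\gamma$ in place of the toy-model value.

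First I would set up the iteration. By Definition \ref{defi:Ik} the times $t_k=\tfrac12(\tfrac{\xi}{k+1}+\tfrac{\xi}{k})$ are strictly decreasing in $k$, so that $t_{k_0}<t_{k_0-1}<\dots<t_l$, and $t_{k-1}$ is at once the right endpoint of $I_k$ and the left endpoint of $I_{k-1}$. Evaluating the estimate of Proposition \ref{proposition:mainres} on $I_k$ at its right endpoint $t=t_{k-1}\in\overline{I_k}$ gives
\[
\|(Z,Q)(t_{k-1})\|_{\ell^2}\leq C\epsilon\Big(\tfrac{\xi}{k}\Big)^{1/2}\Big(\tfrac{\xi}{k^2}\Big)^{\gamma}\|(Z,Q)(t_{k})\|_{\ell^2},
\]
and chaining these inequalities for $k=k_0,k_0-1,\dots,l+1$ telescopes the norms at the intermediate times and produces the product. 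The single extra factor $k=l$ appearing in the statement is immaterial bookkeeping: it only weakens the bound in the resonant window, and since the solution operators $S_k$ are uniformly bounded (Proposition \ref{proposition:hom}) no further constants accumulate beyond the $C$ already absorbed in Proposition \ref{proposition:mainres}.

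For the second assertion I would take $l$ to be the smallest resonant index compatible with the time cut-off $t<\delta\epsilon^{-2}$. Since $t\approx\frac{\xi}{k}$ on $I_k$, this cut-off forces $k\gtrsim\epsilon^2\xi$, and the product for the maximal chain runs over the window $\epsilon^2\xi\leq k\leq k_0=\lfloor(\epsilon\xi)^{2/3}\rfloor$, which is nonempty precisely because $\xi<\epsilon^{-4}$. Rewriting the generic factor as $C\epsilon\,\xi^{1/2+\gamma}k^{-(1/2+2\gamma)}$ turns the product into $(C\epsilon\xi^{1/2+\gamma})^{k_0-l+1}\,(k_0!/(l-1)!)^{-(1/2+2\gamma)}$, which is exactly the $\gamma$-modified analogue of the quantity $G(\xi,\epsilon)$ bounded in Lemma \ref{lemma:Stirling}. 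Repeating that Stirling argument, with $\xi^{1/2+\gamma}$ in place of $\xi$ and the exponent $1/2+2\gamma$ in place of $3/2$, bounds the product by $\exp\!\big(C(\epsilon\xi^{1+\gamma})^{2/3-2\gamma}\big)$; the lower cut-off $l\approx\epsilon^2\xi$ plays the same role as the starting index $k_1=\lfloor\epsilon^2\xi\rfloor$ in Lemma \ref{lemma:Stirling}, ensuring that the truncated low-frequency factors cannot cancel the growth. Finally, since $\xi<\epsilon^{-4}$ gives $(\epsilon\xi^{1+\gamma})^{2/3-2\gamma}\leq\epsilon^{-2}$, the minimum in Theorem \ref{theorem:main} is realized by its first argument, so this matches the stated exponential factor.

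The iteration itself is routine; the delicate point is the Stirling step. As in Lemma \ref{lemma:Stirling} the product is not monotone in $k$: the low-$k$ factors exceed one while the factors near $k_0$ are only comparable to one, so the maximal inflation is a genuine interior maximum, and one must check that truncating the chain at the time-dictated lower end $k\approx\epsilon^2\xi$ still leaves the full exponent intact. Tracking the precise dependence of this exponent on $\gamma$ — so that it lands on $(\epsilon\xi^{1+\gamma})^{2/3-2\gamma}$ and stays below $\epsilon^{-2}$ throughout the range $\xi<\epsilon^{-4}$ — is the only genuinely nontrivial bookkeeping, and everything else reduces to the results already established.
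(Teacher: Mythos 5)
Your proposal is correct and follows essentially the same route as the paper: the paper's proof is precisely the two-step argument of telescoping Proposition \ref{proposition:mainres} over the consecutive intervals $I_k$ (evaluated at the right endpoints) and then identifying the resulting product with the Stirling-type computation of Section \ref{sec:toy}/Lemma \ref{lemma:Stirling}, with the $\gamma$-corrected exponent. You merely spell out bookkeeping the paper leaves implicit (the harmless extra factor at $k=l$, the rewriting $C\epsilon\,\xi^{1/2+\gamma}k^{-(1/2+2\gamma)}$, and the check that the minimum in Theorem \ref{theorem:main} is attained by its first argument for $\xi<\epsilon^{-4}$).
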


\begin{proof}[Proof of Corollary \ref{cor:iterated}]
  The result follows by repeated application of the estimate of Proposition
  \ref{proposition:mainres}. In particular, choosing $t_{l}$ maximal we obtain
  the products discussed in Section \ref{sec:toy} with an additional correction
  in the exponent. 
\end{proof}

\begin{proof}[Proof of Proposition \ref{proposition:mainres}]
  Based on the structure of the homogeneous problem as studied in Section \ref{sec:homogeneous} we consider
  \begin{align*}
    E_{l}(t)&= |Z_{l}(t)|^2 + |Q_{l}(t)|^2 + \frac{1}{2\sqrt{\alpha}} \frac{(\xi-lt)}{\sqrt{l^2+(\xi-lt)^2}} \Re Z_{l}\overline{Q}_l.
  \end{align*}
  Then by the estimates of Proposition \ref{proposition:hom} it holds that
  \begin{align*}
    \dt E_l(t) &\leq C \dt\left( \frac{1}{2\sqrt{\alpha}} \frac{(\xi-lt)}{\sqrt{l^2+(\xi-lt)^2}}  \right) E_{l}(t)\\
               &\quad + 2\overline{Z_l} (c_{l}^{+}Z_{l+1}+c_{l}^{-}Z_{l-1}) \\
               &\quad + 2\overline{Q_l} (d_{l}^{+}Q_{l+1}+d_{l}^{-}Q_{l-1}) \\
               &\quad +\frac{1}{2\sqrt{\alpha}} \frac{(\xi-lt)}{\sqrt{l^2+(\xi-lt)^2}} \overline{Z_l} (d_{l}^{+}Q_{l+1}+d_{l}^{-}Q_{l-1}) \\
               &\quad +\frac{1}{2\sqrt{\alpha}} \frac{(\xi-lt)}{\sqrt{k^2+(\xi-lt)^2}} \overline{Q_l} (c_{l}^{+}Z_{l+1}+c_{l}^{-}Z_{l-1}).
  \end{align*}
  In order to remove the first term we introduce
  \begin{align*}
    \tilde{E}_{l}(t) = E_l(t) \exp\left( \int \left| \dt\left( \frac{1}{2\sqrt{\alpha}} \frac{(\xi-lt)}{\sqrt{l^2+(\xi-lt)^2}}  \right)\right| dt \right)
  \end{align*}
  and observe that
  \begin{align*}
    \dt \tilde{E}_l \leq C \sqrt{\tilde{E}_l} (|c_{l}^{+}|+|d_l^{+}|) \sqrt{\tilde{E}_{l+1}} + C \sqrt{\tilde{E}_l} (|c_{l}^{-}|+|d_l^{-}|) \sqrt{\tilde{E}_{l-1}}.
  \end{align*}
  Recalling that
  \begin{align*}
    |c_{l}^{\pm}| \leq |f(t)|
    \begin{cases}
      (\xi/k)^{1/2} (1+(t-\frac{\xi}{k})^2)^{-3/4} &\text{ if } c_{l}^{\pm}=c_{k\pm 1}^{\mp}, \\
      (\xi/k)^{-1/2} (1+(t-\frac{\xi}{k})^2)^{-1/4} &\text{ else}.
    \end{cases}
  \end{align*}
  we are thus in the framework of Lemma \ref{lemma:odemodel}.
  More precisely, we may define
  \begin{align*}
    Z_{NR}&= \sqrt{\tilde{E}_{k+1}^2 + \tilde{E}_{k-1}^2}, \\
    Z_R&= \sqrt{\sum_{l\not \in \{k-1, k+1\}} \tilde{E}_{l}^2 }. 
  \end{align*}
  Then by the above estimates these functions satisfy the assumptions of Lemma
  \ref{lemma:odemodel} with
  \begin{align*}
    0\leq |f(t)| \leq \epsilon \sqrt{\xi/k} \leq \delta.
  \end{align*}
  In particular, it follows that
  \begin{align*}
    \|(Z,Q)\|_{\ell^2} \approx |Z_{R}|^2 + |Z_{NR}|^2
  \end{align*}
  grows at most by a factor
  \begin{align*}
    1+ \|f(t)\|_{L^\infty}  \left(\frac{\xi}{k^2}\right)^{1/2+\gamma}.
  \end{align*}
  Since we are in the regime where the latter factor is bounded below, we may
  omit the $1$ at the cost of a constant factor, which proves the result.
\end{proof}

\section{On the Model Reduction}
\label{sec:technical}

In this section we discuss the non-simplified linearized Boussinesq equations
\begin{align}
  \label{eq:linBoussinesq}
  \begin{split}
   \dt
  \begin{pmatrix}
    Z \\Q
  \end{pmatrix}
  + A
  \begin{pmatrix}
    Z \\ Q
  \end{pmatrix}
  &=
  \begin{pmatrix}
    f(t)\Delta_t^{-1/4}(\p_y\Delta_t^{-3/4}Z \cos(x))\\
    g(t) \Delta_{t}^{1/4}(\p_y \Delta_t^{-3/4}Z \cos(x))
  \end{pmatrix}\\
  &\quad + \frac{1}{1+t^2}
  \begin{pmatrix}
    f(t)\Delta_t^{-1/4}(\cos(x)\p_y \Delta_t^{1/4}Z)\\
    f(t)\Delta_t^{1/4}(\cos(x)\p_y \Delta_t^{-1/4}Q)
  \end{pmatrix},
\end{split}
\end{align}
which we may also express in an integral system as in Definition
\eqref{eq:system} by introducing the coefficients
\begin{align}
  \label{eq:coeff2}
    \begin{split}
  g_{k}^{\pm} &= \pm \frac{f(t)\xi}{2(1+t^2)} (k^2+(\xi-kt)^2)^{1/4} ((k\pm 1)^2+(\xi-(k\pm 1)t)^2)^{-1/4},\\
  h_{k}^{\pm} &= \pm \frac{f(t)\xi}{2(1+t^2)} (k^2+(\xi-kt)^2)^{-1/4} ((k\pm 1)^2+(\xi-(k\pm 1)t)^2)^{1/4}.
    \end{split}
\end{align}
We observe that by our choice of time interval
\begin{align*}
  \frac{f(t)}{2(1+t^2)} \leq \min(\epsilon \frac{\sqrt{t}}{1+t^2}, \delta \frac{1}{1+t^2}) 
\end{align*}
is small and integrable and that
\begin{align*}
  f(t) \cos(x)\p_y
\end{align*}
is a transport operator which corresponds to a change of variables
\begin{align}
  \label{eq:variables}
  \begin{split}
  (x,y)&\mapsto (x,y-F(t)\sin(x)), \\
  F(t)&= \int_{0}^t \frac{f(t)}{1+t^2} d\tau \leq 2 \epsilon,
\end{split}
\end{align}
which is an analytic change of variables and a small perturbation of the
identity (for $\epsilon$ small).

In view of this smallness and in order to simplify the analysis of the model and
the presentation of the resonance mechanism, throughout this article we have considered the simplified linearized
Boussinesq equations which omitted these terms.

In the following we show that this simplification indeed does not change the results
of the long-time regime of Section \ref{sec:longtime} and the echo chains of
Section \ref{sec:echo}.
For the small time regime of Section \ref{sec:small} we obtain (much) rougher
bounds for the full model. We expect that with (considerable) technical effort
it should be possible to improve these bounds after incorporating an additional
change of variables (see the discussion following Proposition \ref{proposition:mainresfull}).

We begin by discussing the ``large time'' regime of Section \ref{sec:longtime}:
\begin{align*}
  2 \xi < t < \delta \epsilon^{-2}.
\end{align*}

\begin{proposition}
  \label{proposition:longtimefull}
  Let $\epsilon, \delta, f(t), \gamma(t)$ be as in Proposition
  \ref{proposition:longtime}. Then the solution of the linearized Boussinesq
  equations \eqref{eq:linBoussinesq} exhibits at most algebraic growth on the
  time interval $(2\xi, \delta \epsilon^{-2})$.
  More precisely, for all $t \in (2\xi, \delta \epsilon^{-2})$ the projections
  onto and away from the Fourier modes $k=-1,1$ satisfy:
    \begin{align*}
    \|1_{|k|\neq 1}(Z, Q)(t)\| \leq C_{\alpha} \exp(10) \sqrt{\frac{t}{\xi}}  \|(Z, Q)(2\xi)\|, \\
    \|1_{|k|=1}(Z, Q)(t)\| \leq C_{\alpha, \gamma} (\frac{t}{\xi})^{\gamma} \exp(10) \sqrt{\frac{t}{\xi}} \|(Z, Q)(2\xi)\|,
    \end{align*}
    where $1/2< \gamma< 1$ is a constant.
  \end{proposition}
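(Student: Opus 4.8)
The plan is to show that the only structural change introduced by the non-simplified equations \eqref{eq:linBoussinesq} is the addition of the coefficient functions $g_k^{\pm}, h_k^{\pm}$ from \eqref{eq:coeff2} to the existing coefficients $c_k^{\pm}, d_k^{\pm}$, and that these new coefficients are uniformly small in $L^1$ in time on the interval $(2\xi, \delta\epsilon^{-2})$. The proof of Proposition \ref{proposition:longtime} proceeds entirely by estimating the time integrals of the coefficient functions and invoking Gronwall's lemma (after the weighted change of unknown using the exponent $\gamma$ to absorb the non-integrable $c_{\pm 1}^{\mp}$ contributions). Hence the plan is to re-run that same argument, checking only that the additional coefficients $g_k^\pm, h_k^\pm$ do not spoil the integrability, and in fact contribute strictly less than the terms already controlled.

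First I would record that the non-simplified system can be written as \eqref{eq:system} with the integrand $(c_k^\pm Z_{k\pm1}+\dots, d_k^\pm Z_{k\pm1}+\dots)$ replaced by one that additionally includes $g_k^\pm Z_{k\pm1}$ and $h_k^\pm Q_{k\pm1}$. Since $S_k$ is still the homogeneous solution operator bounded by $C_\alpha$ uniformly (Proposition \ref{proposition:hom}), the entire estimate reduces to bounding $\sup_l \int_{2\xi}^{t} (|g_l^\pm|+|h_l^\pm|)\,dt$. The key observation is the prefactor bound already noted in the excerpt,
\begin{align*}
  \frac{f(t)}{2(1+t^2)} \leq \min\left(\epsilon \frac{\sqrt{t}}{1+t^2}, \delta \frac{1}{1+t^2}\right),
\end{align*}
which is integrable with a uniformly bounded integral. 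Combined with the fact that for $t>2\xi$ every mode is non-resonant, so that $|\xi-kt|\geq \tfrac12 t \geq |\xi|$ and the ratios of the $(k^2+(\xi-kt)^2)^{\pm1/4}$ factors in $g_k^\pm, h_k^\pm$ are bounded by a universal constant, one obtains
\begin{align*}
  |g_k^\pm| + |h_k^\pm| \leq C \frac{f(t)\,\xi}{1+t^2} \leq C' \frac{f(t)\,\xi}{t^2},
\end{align*}
which is exactly of the same form and size as the bound $|c_k^\pm|+|d_k^\pm|\leq C f(t)\xi/t^2$ used in the proof of Proposition \ref{proposition:longtime}. Thus the new terms are absorbed into the same uniformly-$L^1$ estimate, and in particular they do not contribute to the $k\in\{-1,0,1\}$ borderline terms that required the $\gamma$-weighting.

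I would therefore conclude by noting that, since $g_k^\pm$ and $h_k^\pm$ carry the extra decay factor $(1+t^2)^{-1}$ relative to $c_k^\pm$, their contribution is genuinely smaller and in particular never produces a non-integrable $t^{-1/2}$ term of the type $c_{\pm1}^{\mp}$ which forced the algebraic growth. Consequently the exact same weighted Gronwall argument — introducing $(t/\xi)^{-\gamma}$ on the $k=\pm1$ components and observing that the resulting diagonal correction $-\gamma/t$ is negative while all off-diagonal entries remain integrable — applies verbatim, yielding the stated bounds with the identical constants $C_\alpha, C_{\alpha,\gamma}$ and the same range $1/2<\gamma<1$. The main (though modest) obstacle is purely bookkeeping: one must verify the non-resonance estimate for the new factors uniformly across the borderline indices $k\in\{-1,0,1\}$, where the ratio $((k\pm1)^2+(\xi-(k\pm1)t)^2)^{\mp1/4}$ could a priori degenerate; but since $\xi>2\epsilon^{-2}$ is excluded here (the interval is nonempty only for $\xi<\epsilon^{-2}$), these indices remain non-resonant for $t>2\xi$ exactly as in the simplified case, so no genuine difficulty arises.
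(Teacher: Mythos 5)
Your overall strategy is exactly the paper's: bound the additional coefficients $g_k^{\pm}, h_k^{\pm}$ in $L^1$ in time on $(2\xi,\delta\epsilon^{-2})$ and then re-run the weighted Gronwall argument of Proposition \ref{proposition:longtime}. However, your key estimate contains a genuine error at the borderline modes. You claim that for $t>2\xi$ \emph{every} mode satisfies $|\xi-kt|\geq \tfrac12 t$, so that the ratios $(k^2+(\xi-kt)^2)^{1/4}((k\pm1)^2+(\xi-(k\pm1)t)^2)^{-1/4}$ are uniformly bounded and hence $|g_k^\pm|+|h_k^\pm|\leq C' f(t)\xi/t^2$ for all $k$. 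This fails precisely when $k=0$ or $k\pm1=0$: for the zero mode the frequency factor is $\xi^2$, \emph{constant in time}, not growing like $t^2$. Concretely,
\begin{align*}
  |h_{0}^{\pm}| \approx \frac{f(t)\xi}{1+t^2}\,\xi^{-1/2}\bigl(1+(\xi\mp t)^2\bigr)^{1/4} \approx \frac{f(t)\xi}{1+t^2}\left(\frac{t}{\xi}\right)^{1/2},
\end{align*}
and similarly for $g_{\pm1}^{\mp}$, so the ratio degenerates by a factor $(t/\xi)^{1/2}$ which is unbounded on the time interval. Your closing paragraph dismisses exactly these indices by asserting they ``remain non-resonant \dots so no genuine difficulty arises,'' which is the same false claim; note that even in the simplified model these are the modes that forced the $\gamma$-weighting and the algebraic growth in the first place, so they cannot be waved away by non-resonance.

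The paper handles this case by a separate, weaker estimate: for $k\in\{-1,0,1\}$ one only has
\begin{align*}
  |g_{k}^{\pm}|+|h_{k}^{\pm}| \leq \frac{\delta\xi}{1+t^2}\left(\frac{t}{\xi}\right)^{1/2} \approx \delta\,\xi^{1/2} t^{-3/2},
\end{align*}
which, crucially, is still integrable on $(2\xi,\infty)$ with a bound uniform in $\xi$ and $\epsilon$ (the integral is of order $\delta$). So these new terms, unlike $c_{\pm1}^{\mp}$, never produce a non-integrable contribution, and the weighted Gronwall argument then goes through as you intended. Your proof is therefore repairable with this one corrected estimate, but as written the blanket bound $C'f(t)\xi/t^2$ and its justification are false, and the case analysis that the paper performs (generic $k$ versus $k\in\{-1,0,1\}$) is not optional bookkeeping but the actual content of the proof.
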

  
  \begin{proof}[Proof of Proposition \ref{proposition:longtimefull}]
    We observe that for all $k \not \in {-1,0,1}$ for $t>2\xi$ the fractions
    \begin{align*}
      (k^2+(\xi-kt)^2)^{1/4} ((k\pm 1)^2+(\xi-(k\pm 1)t)^2)^{-1/4}
    \end{align*}
    are uniformly bounded.
    Hence, for these values of $k$ we may bound
    \begin{align*}
      |g_{k}^{\pm}| + |h_{k}^{\pm}| \leq \frac{\delta \xi}{1+t^2}.
    \end{align*}
    We further observe that
    \begin{align*}
      \int_{2\xi}^\infty \frac{\delta \xi}{1+t^2} dt \leq 2 \delta
    \end{align*}
    is integrable.
    The result hence follows by the same proof as for Proposition
    \ref{proposition:longtime} by noting that
    in the remaining cases
    \begin{align*}
      |g_{k}^{\pm}| + |h_{k}^{\pm}| \leq \frac{\delta \xi}{1+t^2} (t/\xi)^{1/2}.
    \end{align*}
  \end{proof}

We next turn to the resonant regime of Section \ref{sec:echo} which consists of
the time intervals $I_k=(t_{k}, t_{k-1})$ for which
\begin{align*}
  \epsilon \sqrt{\xi/k} \sqrt{\xi/k^2}
\end{align*}
is large.

\begin{proposition}[Bound on norm inflation]
  \label{proposition:mainresfull}
  Under the assumptions of Proposition \ref{proposition:mainres} also for the
  linearized Boussinesq equations the possible norm inflation is controlled in
  the sense that for all $t\in \overline{I_k}$ it holds that
  \begin{align*}
    \left\| (Z,Q) (t) \right\|_{\ell^2} \leq C \epsilon (\frac{\xi}{k})^{1/2} (\frac{\xi}{k^2})^\gamma \left\| (Z,Q) (t_k) \right\|_{\ell^2},
  \end{align*}
  where $C=C(\alpha)$ and $0<\gamma<\delta$ are constants
\end{proposition}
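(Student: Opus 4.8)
The plan is to run the energy argument of Proposition \ref{proposition:mainres} essentially verbatim, treating the two additional couplings $g_k^{\pm}$ and $h_k^{\pm}$ from \eqref{eq:coeff2} as perturbations of the couplings $c_k^{\pm}, d_k^{\pm}$ already present in the simplified system. Concretely, I would again introduce the mode energies $E_l(t)$ adapted to the homogeneous semigroup, pass to the modified energies $\tilde E_l(t)$ that absorb the diagonal semigroup contribution, and reduce the resulting system of differential inequalities to the framework of Lemma \ref{lemma:odemodel} via the splitting $Z_{NR}=\sqrt{\tilde E_{k+1}^2+\tilde E_{k-1}^2}$ and $Z_R=(\sum_{l\notin\{k-1,k+1\}}\tilde E_l^2)^{1/2}$. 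The only new feature is that the right-hand side of the differential inequality for $\tilde E_l$ now also carries terms $\sqrt{\tilde E_l}(|g_l^{\pm}|+|h_l^{\pm}|)\sqrt{\tilde E_{l\pm1}}$, so everything reduces to controlling $|g_l^{\pm}|, |h_l^{\pm}|$ on $I_k$ by the same two-regime bounds that Lemma \ref{lemma:odemodel} requires.

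The heart of the matter is that on $I_k$ one has $1+t^2\approx(\xi/k)^2$, so the extra prefactor $\tfrac{1}{1+t^2}$ distinguishing \eqref{eq:coeff2} from \eqref{eq:coeff1} contributes a factor $\approx k^2/\xi^2$. Forming the ratios directly from \eqref{eq:coeff1} and \eqref{eq:coeff2}, so that the normalization of $(Z,Q)$ cancels, I would first treat the two \emph{main-resonance} couplings $g_{k\pm1}^{\mp}$ and $h_{k\pm1}^{\mp}$, for which the peaked factor sits on the resonant mode $k$. A short computation gives $g_{k-1}^{+}/c_{k-1}^{+}\approx (1+(t-\xi/k)^2)^{1/2}/\xi$ and $h_{k-1}^{+}/d_{k-1}^{+}\approx k^3(1+(t-\xi/k)^2)/\xi^2$ (and symmetrically for $g_{k+1}^{-}, h_{k+1}^{-}$), both of which are bounded on $\overline{I_k}$ by $\tfrac12$ once $\xi/k^2\geq 100$, since there $|t-\xi/k|\lesssim\xi/k^2$. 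Thus the dominant couplings of the full model are pointwise at most half of the corresponding couplings of the simplified model. For every other coupling — the back-couplings $g_k^{\pm}, h_k^{\pm}$ into the resonant mode as well as all genuinely off-resonant terms — I would not seek a pointwise bound, but only the integrated estimate $\sup_l\int_{I_k}(|g_l^{\pm}|+|h_l^{\pm}|)\,dt\lesssim \|f\|_{L^\infty(I_k)}=\epsilon\sqrt{\xi/k}$, matching the size of the ``else'' coefficients already used for $c,d$; the $\tfrac{1}{1+t^2}$ factor is exactly what renders these integrals uniformly integrable.

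Granting these bounds, the combined coefficient $|c_l^{\pm}|+|d_l^{\pm}|+|g_l^{\pm}|+|h_l^{\pm}|$ obeys the same two-regime estimate as $|c_l^{\pm}|+|d_l^{\pm}|$ up to a constant factor, with $\|f\|_{L^\infty(I_k)}$ replaced by a fixed multiple of itself that is still small enough for Lemma \ref{lemma:odemodel} to apply. The conclusion of that lemma then carries over and yields the same growth factor $C\epsilon(\xi/k)^{1/2}(\xi/k^2)^{\gamma}$ on $\overline{I_k}$, with constants $C=C(\alpha)$ and $0<\gamma<\delta$ of the same type as in Proposition \ref{proposition:mainres} — exactly the assertion of the proposition. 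A conceptual cross-check is the transport interpretation \eqref{eq:variables}: the omitted term is $\tfrac{f(t)}{1+t^2}\cos(x)\partial_y$ conjugated by powers of $\Delta_t$, i.e. a near-identity analytic change of variables with total displacement $F(t)\leq 2\epsilon$, so it cannot create the resonant growth and only perturbs the existing mechanism.

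I expect the main obstacle to be the uniform treatment of the coupling $h_l^{\pm}$, for which no pointwise domination is available. Unlike $c$ and $d$, whose peaked factors carry the sharp power $-3/4$, the $Q$-to-$Q$ coupling $h_l^{\pm}$ inherits a factor with the power $+1/4$, which \emph{grows} towards the endpoints of $I_k$; consequently the back-coupling $h_k^{\pm}$ into the resonant mode is not controlled pointwise by $d_k^{\pm}$ and can in fact exceed it near $\partial I_k$. The delicate point is therefore to show that this growth is nevertheless integrable, with $\int_{I_k}|h_k^{\pm}|\,dt\lesssim\|f\|_{L^\infty(I_k)}$, and that $h_k^{\pm}$ enters the energy estimate only through the benign $Z_R$-slot of Lemma \ref{lemma:odemodel}. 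This is what forces the split into the pointwise-controlled main-resonance couplings and the merely integrable remainder, and it is the step in which the regime restriction $\xi/k^2\geq 100$ is genuinely used.
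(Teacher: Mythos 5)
Your overall architecture is the same as the paper's: reuse the energies $E_l$, $\tilde E_l$ of Proposition \ref{proposition:mainres}, absorb the new couplings $g_l^{\pm},h_l^{\pm}$ of \eqref{eq:coeff2} into the differential inequalities, and conclude via Lemma \ref{lemma:odemodel}; your pointwise ratio bounds for the main-resonance couplings $g_{k\pm1}^{\mp},h_{k\pm1}^{\mp}$ are also correct (up to the caveat that one should compare $|h_{k\pm1}^{\mp}|$ with the \emph{upper bound} used for $d_{k\pm1}^{\mp}$ rather than with $d_{k\pm1}^{\mp}$ itself, since $g(t)$ may vanish). The genuine gap is your treatment of all remaining couplings, in particular the back-couplings $g_k^{\pm},h_k^{\pm}$ into the resonant mode: you explicitly decline to prove pointwise bounds and substitute the integrated estimate $\int_{I_k}(|g_l^{\pm}|+|h_l^{\pm}|)\,dt\leq C\|f\|_{L^\infty(I_k)}$, claiming this yields ``the same two-regime estimate''. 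That implication fails. Lemma \ref{lemma:odemodel} is a pointwise differential-inequality framework, and in its energy proof a coefficient $w(t)$ sitting in the $Z_R$-equation gets multiplied by the weight ratio $\sqrt{\xi/k^2}\,(1+(t-\xi/k)^2)^{-1/4}$, which is of size $\sqrt{\xi/k^2}$ at the resonance. An $L^1$ bound carries no information on where the mass of $w$ sits; since $h_k^{\pm}$ is in fact peaked \emph{at} the resonance $t\approx\xi/k$, the best Gronwall factor you can extract from integrability alone is $\exp\bigl(C\|f\|_{L^\infty(I_k)}\sqrt{\xi/k^2}\bigr)=\exp\bigl(C\epsilon\xi/k^{3/2}\bigr)$ per interval, hence $\exp(C\epsilon\xi)$ over the chain --- far worse than the claimed factor $C\epsilon(\xi/k)^{1/2}(\xi/k^2)^{\gamma}$ and than the $\exp(C(\epsilon\xi)^{2/3})$ of Theorem \ref{theorem:main}. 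Your justification that $h_k^{\pm}$ enters ``only through the benign $Z_R$-slot'' has it exactly backwards: the $Z_R$-slot is the \emph{restrictive} one (it demands the small pointwise bound $f(t)(\xi/k^2)^{-1/2}(1+(t-\xi/k)^2)^{-1/4}$), while it is the $Z_{NR}$-slot that tolerates large, resonant-form coefficients.

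The missing ingredient is that the pointwise bounds you abandoned are actually available, and they are precisely how the paper closes the argument: on $I_k$ one has $1+t^2\approx(\xi/k)^2$ and $(1+(\xi/(k\pm1)-t)^2)^{1/4}\leq C(\xi/k^2)^{1/2}$, whence for \emph{every} $l$ (resonant or not)
\begin{align*}
  |g_l^{\pm}|+|h_l^{\pm}| \leq C f(t)\,(\xi/k^2)^{-1/2}\bigl(1+(\xi/k-t)^2\bigr)^{-1/4},
\end{align*}
i.e.\ each new coupling is dominated pointwise by the non-resonant coefficient form and can be absorbed into the ``else'' slot of Lemma \ref{lemma:odemodel} at the cost of a constant. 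Note also that your diagnosis of the obstacle is mislocalized: since $h_k^{\pm}$ carries the factor $(k^2+(\xi-kt)^2)^{-1/4}$, it is maximal at the center of $I_k$ and exceeds $d_k^{\pm}$ there, not near $\p I_k$; this is exactly the region where $L^1$ control is useless because of the weight amplification described above, which is why integrability cannot be the cure. With the displayed pointwise estimate in place of your integrated estimates, your argument closes and coincides with the paper's proof.
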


\begin{proof}[Proof of Proposition \ref{proposition:mainresfull}]
We consider the same energies and unknowns as in the proof of Proposition
\ref{proposition:mainres}, where in the computation of $\dt \tilde{E}_l$ we
obtain additional terms controlled by
\begin{align*}
  (|h_{l}^{\pm}|+|g_{l}^{\pm}|) \tilde{E}_{l\pm 1}.
\end{align*}
We now observe that for $l \not \in \{k-1,k, k+1\}$  it holds that
\begin{align*}
  (|h_{l}^{\pm}|+|g_{l}^{\pm}|) \leq \frac{f(t)}{1+t^2} \xi
\end{align*}
and for $t \in I_k$ we may further bound
\begin{align*}
  \frac{f(t)}{1+t^2} \xi \leq c \delta \frac{1}{1+(\xi/k)^2} \xi \leq c \delta (\xi/k^2)^{-1} \\
  \leq c \delta  (\xi/k^2)^{-1/2} (1+(t-\frac{\xi}{k})^2)^{-1/4}.
\end{align*}
For $l \in \{k-1,k,k+1\}$ we argue similarly and control
\begin{align*}
  & \quad \frac{f(t)}{1+(\xi/k)^2} \xi (1+(\xi/k-t)^2)^{-1/4} (1+(\xi/(k+1)-t)^2)^{1/4}\\
  & \leq f(t) (\xi/k^2)^{-1} (1+(\xi/k-t)^2)^{-1/4} (\xi/k^2)^{1/2} \\
  & \leq f(t) (\xi/k^2)^{-1/2} (1+(\xi/k-t)^2)^{-1/4} 
\end{align*}
Thus for all $l$ we may control
\begin{align*}
  (|h_{l}^{\pm}|+|g_{l}^{\pm}|) \tilde{E}_{l\pm 1} \leq c f(t) (\xi/k^2)^{-1/2} (1+(\xi/k-t)^2)^{-1/4} \tilde{E}_{l\pm 1}
\end{align*}
in the same way as a non-resonant contribution $c_{l}^\pm \neq c_{k\pm
  1}^{\mp}$.
The result hence follows by the same argument as in Proposition
\ref{proposition:mainres}.
\end{proof}

It hence only remains to discuss the ``small time'' regime of Section
\ref{sec:small}.
Here we observe that
\begin{align*}
  \int \frac{f(t)}{1+t^2} \xi \leq \int \epsilon \frac{\sqrt{t}}{1+t^2} \xi \leq c \epsilon \xi
\end{align*}
in general is \emph{not} small enough to employ the contraction argument of
Proposition \ref{proposition:smalltime} unless $\xi$ is smaller than
$\epsilon^{-1}$.
Indeed also for the change of variables \eqref{eq:variables} we cannot expect
good bounds in high Sobolev or Gevrey norms for frequencies larger than $\epsilon^{-1}$.
In order to obtain better bounds we thus have to take these changes into
account.

One option here is to consider the unknowns $(Z,Q)$ in the coordinates \eqref{eq:variables}. However, here $F(t)\sin(x)$ introduces further
nearest neighbor coupling in $x$, which makes terms such as $\Delta_t^{-1/4}$
very technically challenging to study.
A second option, which sidesteps this issue, is to restrict to studying
stability estimates in $\ell^2(\Z)$ (that is, with respect to Fourier modes in
$x$ for a fixed frequency in $y$), following the argument of
\cite{bedrossian21}.
Since $f(t)/(1+t^2)\cos(x)\p_y$ is an anti-symmetric operator in $L^2$, this
space allows us to exploit  cancellation and hence to estimate
\begin{align*}
  h_{l}^{\pm} - \frac{f(t)}{2(1+t^2)} \xi
\end{align*}
instead.

As a first preliminary result we consider an adaptation of the ``intermediate
time'' estimate of \cite[Section 6.3.2]{bedrossian21}, which allows for loss of
regularity in Gevrey $\sigma$ with $\sigma<2$ in the time interval where $t>\sqrt{\xi}$.

\begin{lemma}
  \label{lemma:smalltimelarger}
  Let $\xi$ and $T$ be as in Proposition \ref{proposition:smalltime} and suppose
  that $\sqrt{\xi}< T$.
  Then on the time interval $(\sqrt{\xi},T)$ the maximal possible norm inflation
  is bounded by
  \begin{align*}
    \exp( \delta c_{\sigma} \xi^{\sigma})
  \end{align*}
  for any $\sigma\geq \frac{1}{2}$.
\end{lemma}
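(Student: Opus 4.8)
The plan is to run an $\ell^2(\Z)$ energy estimate at fixed frequency $\xi$, following the intermediate-time argument of \cite[Section 6.3.2]{bedrossian21}, and to isolate the only genuinely problematic contribution, namely the transport term with coefficients $g_l^{\pm},h_l^{\pm}$ from \eqref{eq:coeff2} that is absent from the simplified model. First I would note that on $(\sqrt{\xi},T)\subset(0,T)$ the simplified part of the system is already controlled: every resonant interval $I_k$ meeting this range has $k>k_0$, so by Proposition \ref{proposition:smalltime} together with the non-resonant estimates used in Proposition \ref{proposition:mainresfull} the coefficients $c_l^{\pm},d_l^{\pm}$ contribute only a bounded factor. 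Working with the refined energy $\tilde E_l$ of Proposition \ref{proposition:mainresfull} (which is comparable to $|Z_l|^2+|Q_l|^2$ and absorbs the homogeneous evolution of Proposition \ref{proposition:hom}), it therefore suffices to bound the growth induced by $g_l^{\pm},h_l^{\pm}$.

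The key structural input, as indicated just before the statement, is that $\frac{f(t)}{1+t^2}\cos(x)\p_y$ is a real transport field and hence anti-symmetric on $L^2(dx)$. Consequently the ``constant'' part $\frac{f(t)\xi}{2(1+t^2)}$ of the coefficients, which couples $Z_l$ to $Z_{l\pm1}$ (and $Q_l$ to $Q_{l\pm1}$) with equal weight, cancels exactly in $\frac{d}{dt}\sum_l(|Z_l|^2+|Q_l|^2)$, and what survives is only the commutator with $\Delta_t^{\pm1/4}$, i.e.\ the deviations
\begin{align*}
  h_l^{\pm}-\tfrac{f(t)\xi}{2(1+t^2)},\qquad g_l^{\pm}-\tfrac{f(t)\xi}{2(1+t^2)}.
\end{align*}
The cross term $\frac{1}{2\sqrt{\alpha}}\frac{(\xi-lt)}{\sqrt{l^2+(\xi-lt)^2}}\Re Z_l\overline{Q}_l$ in $\tilde E_l$ introduces a slowly varying, uniformly bounded multiplier into this cancellation and hence produces only lower-order corrections of the same deviation type, which I would absorb.

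The heart of the estimate is a pointwise bound on these deviations. Writing $a_l=l^2+(\xi-lt)^2$, one has $|h_l^{\pm}-\frac{f(t)\xi}{2(1+t^2)}|=\frac{f(t)\xi}{2(1+t^2)}\,|(a_{l\pm1}/a_l)^{1/4}-1|$, and the supremum over $l$ is attained near the resonance $l\approx\xi/t$, where $a_l\approx(\xi/t)^2$ and $a_{l\pm1}/a_l\approx 1+t^4/\xi^2$. On $(\sqrt{\xi},T)$ we have $t^4/\xi^2\geq1$, so $|(a_{l\pm1}/a_l)^{1/4}-1|\lesssim t/\sqrt{\xi}$, and using $f(t)\leq\delta$ this gives $\sup_l|h_l^{\pm}-\frac{f(t)\xi}{2(1+t^2)}|\lesssim\delta\,\sqrt{\xi}/t$ for $t\gtrsim1$ (and analogously for $g_l^{\pm}$). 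Integrating, I would obtain
\begin{align*}
  \int_{\sqrt{\xi}}^{T}\sup_l\Big|h_l^{\pm}-\tfrac{f(t)\xi}{2(1+t^2)}\Big|\,dt\lesssim\delta\,\sqrt{\xi}\,\log\frac{T}{\sqrt{\xi}}\leq\delta\,c_{\sigma}\,\xi^{\sigma}
\end{align*}
for any $\sigma\geq\tfrac12$, the logarithm (precisely why the critical exponent $\tfrac12$ cannot be reached cleanly) being absorbed into $c_{\sigma}$ for $\sigma>\tfrac12$.

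Since the system has only nearest-neighbour coupling, the energy balance reduces by Cauchy--Schwarz to $\frac{d}{dt}\sum_l\tilde E_l\leq C\sup_l|h_l^{\pm}-\frac{f(t)\xi}{2(1+t^2)}|\,\sum_l\tilde E_l$ up to the already-bounded simplified and homogeneous contributions, and Gronwall's lemma then yields the claimed growth factor $\exp(\delta c_{\sigma}\xi^{\sigma})$ for the $\ell^2$ norm on $(\sqrt{\xi},T)$. I expect the main obstacle to be the anti-symmetry bookkeeping rather than the (routine) integration: one must verify carefully that the non-integrable factor $\frac{f(t)\xi}{1+t^2}$ enters the energy identity \emph{only} through the exactly cancelling constant part, and that the cross-term contribution and the $\Delta_t^{\pm1/4}$-commutators genuinely reduce to the integrable deviation of size $\sqrt{\xi}/t$, so that no uncontrolled $\frac{f(t)\xi}{1+t^2}$ term remains. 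The change of variables \eqref{eq:variables} provides the conceptual reason this cancellation must occur, but implementing it at the level of the $\Delta_t^{\pm1/4}$ weights is what forces the $\ell^2$ formulation here.
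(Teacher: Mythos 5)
Your proposal is correct in outline but takes a genuinely different route from the paper. The paper's own proof of Lemma \ref{lemma:smalltimelarger} uses \emph{no} cancellation at all: it proves the pointwise bound $|g_l^{\pm}|+|h_l^{\pm}|\leq f(t)$ uniformly in $l$, by noting that for non-resonant $l$ the weight ratio is bounded by $2$, so the coefficient is at most $\frac{f(t)\xi}{1+t^2}\leq f(t)$ since $\xi\leq t^2$, while at the resonant frequency the ratio is at most $2(1+\frac{\xi}{k^2})$ and $\frac{\xi}{1+t^2}(1+\frac{\xi}{k^2})\leq\frac{\xi+t^2}{1+t^2}\leq 2$, again by $t>\sqrt{\xi}$; it then combines this with $f(t)\leq\xi^{1/2}/t$ on $(\sqrt{\xi},T)$ (which is exactly the definition of $T$) and a Gronwall estimate with a time-dependent Gevrey weight $\exp(\lambda(t)\xi^{\sigma})$. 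Indeed, the whole point of the paper's splitting at $t=\sqrt{\xi}$ is that above $\sqrt{\xi}$ this crude bound suffices; the $L^2$ anti-symmetry cancellation you invoke is reserved for the genuinely small times $t<\min(\sqrt{\xi},T)$ in Lemma \ref{lemma:smalltimecoeff}. Transplanting that cancellation into the intermediate regime does work --- your deviation bound $|h_l^{\pm}-\tfrac{f(t)\xi}{2(1+t^2)}|\lesssim f(t)\sqrt{\xi}/t\leq\delta\sqrt{\xi}/t$ is valid both at and away from resonance, and integrates to the same $\sqrt{\xi}\log(T/\sqrt{\xi})$ that the paper's chain $f\leq\xi^{1/2}/t\leq\xi^{\sigma}t^{2\sigma-2}$ produces --- but it costs you two things. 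First, the cancellation confines you to unweighted $\ell^2$ (as the paper itself remarks after Lemma \ref{lemma:smalltimecoeff}), whereas the paper's pointwise coefficient bound extends to weighted spaces. Second, the cross term in $\tilde{E}_l$, which you must keep to control the homogeneous evolution, carries the $l$-dependent multiplier $\frac{\xi-lt}{\sqrt{l^2+(\xi-lt)^2}}$ whose nearest-neighbour differences are $O(1)$ near resonance, so the ``exact'' cancellation is broken precisely where it matters; the resulting error is again of size $\frac{f(t)\xi}{1+t^2}\cdot\frac{t}{\sqrt{\xi}}\approx f(t)\sqrt{\xi}/t$, so this is repairable, but it is exactly the bookkeeping you deferred and it is avoided entirely by the paper's argument. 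Finally, both your route and the paper's share the logarithmic loss that obstructs the endpoint $\sigma=\tfrac12$; that loss is actually avoidable by integrating $f(t)\approx\epsilon\sqrt{1+t}$ directly, which gives $\int_{\sqrt{\xi}}^{T}f\,dt\lesssim\epsilon T^{3/2}\approx\sqrt{\xi}$ with no logarithm.
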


\begin{proof}
  Arguing similarly as in the proof of Proposition \ref{proposition:smalltime}
  we consider the energy
  \begin{align*}
    \exp(\lambda(t) \xi^{\sigma}) \|(Z,Q)\|_{L^2}^2
  \end{align*}
  with $\lambda(t)$ decreasing in time and bounded below, still to be determined.
  Computing the time derivative it then suffices to show that $\lambda(t)$ can
  be chosen such that 
  \begin{align*}
    \dot{\lambda}(t) \xi^{\sigma} + |g_{l}^{\pm}|+ |h_{l}^\pm|.
  \end{align*}
  Indeed, we claim that
  \begin{align*}
    |g_{l}^{\pm}|+ |h_{l}^\pm| \leq f(t)
  \end{align*}
  and observe that since $t<T$ and $t>\sqrt{\xi}$ it holds that
  \begin{align*}
    f(t) \sqrt{\xi}{k} &\leq 1, \\
    \Leftrightarrow  f(t)\leq \xi^{1/2}/t &\leq \xi^{\sigma} t^{-1+2(\sigma-1/2)}.
  \end{align*}
  The result then follows by noting that $t^{-1+2(\sigma-1/2)}$ is integrable
  and hence
  \begin{align*}
    \lambda(\tau):= \lambda(\sqrt{\xi}) + \int_{\sqrt{\xi}}^\tau t^{-1+2(\sigma-1/2)} dt
  \end{align*}
  yields the desired result.

  It remains to prove the claim.
  For this purpose we observe that away from resonant frequencies, that is for
  $l\not \in \{k-1,k,k+1\}$ it holds that 
  \begin{align*}
    (l^2+(\xi-lt)^2)^{1/4} ((l\pm 1)^2 + (\xi-(l\pm1)t)^2)^{-1/4} \leq 2
  \end{align*}
  is bounded and hence  
  \begin{align*}
     |g_{l}^{\pm}|+ |h_{l}^\pm| \leq \frac{f(t)}{1+t^2} \xi \leq f(t),
  \end{align*}
  where we used $t>\sqrt{\xi}$ in the last step.

  For the resonant frequency we estimate
  \begin{align*}
    (l^2+(\xi-lt)^2)^{1/4} ((l\pm 1)^2 + (\xi-(l\pm1)t)^2)^{-1/4} \leq 2 (1+\frac{\xi}{k^2}).
  \end{align*}
  Then since $t>\sqrt{\xi}$ it holds that.
  \begin{align*}
    \frac{1}{1+t^2} \xi (1+\frac{\xi}{k^2}) \leq \frac{1}{1+t^2} (\xi + t^2) \\
    \leq \frac{1}{1+t^2} 2 t^2 \leq 2.
  \end{align*}
  This concludes the proof of the claim.
\end{proof}

We next need to consider the ``small time regime'' where $t<\min(\sqrt{\xi},T)$,
where we adapt the argument of Section 6.3.1 in \cite{bedrossian21} ( in their
notation we estimate a term similar to $\mathcal{T}_{N}^{p,1}$).
On that time interval the bound by $\sqrt{\xi}/t$ is not sufficient.
We thus need to exploit the $L^2$ cancellation, which involves
\begin{align*}
  (1+(\frac{\xi}{l}-t)^2)^{1/4}(1+(\frac{\xi}{l\pm 1}-t)^2)^{-1/4} -1.
\end{align*}

\begin{lemma}
\label{lemma:smalltimecoeff}
  Let $\xi$ and $T$ be as in Proposition \ref{proposition:smalltime}.
  Then for all $0< t< \min (\sqrt{\xi}, T)$ it holds that
  \begin{align*}
    & \quad \langle Z, |\frac{1}{1+t^2} (f(t)|\p_x|^{1/2}\Delta_t^{-1/4}(\cos(x)\p_y |\p_x|^{-1/2} \Delta_t^{1/4}Z \rangle_{\ell^2}\\
    &+ \langle Q, f(t) |\p_x|^{-1/2}\Delta_t^{+1/4}(\cos(x)\p_y |\p_x|^{1/2}\Delta_t^{-1/4} Q  \rangle_{L^2}
  \end{align*}
  is bounded by
  \begin{align*}
    \exp(c_{\sigma} \xi^{\sigma}) \|(Z,Q)(0)\|_{\ell^2}^2 
  \end{align*}
\end{lemma}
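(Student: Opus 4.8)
The plan is to read the displayed quantity as the contribution of the omitted $\frac{1}{1+t^2}$--terms of \eqref{eq:linBoussinesq} to $\frac{d}{dt}\|(Z,Q)\|_{\ell^2}^2$ and to gain a factor from the anti-symmetry of $\cos(x)\p_y$. Passing to Fourier modes in $x$ at fixed $\xi$ and abbreviating $a_l=(l^2+(\xi-lt)^2)^{1/4}$, these terms act through the coefficients $g_l^{\pm}=\pm\frac{f(t)\xi}{2(1+t^2)}\frac{a_l}{a_{l\pm1}}$ and $h_l^{\pm}=\pm\frac{f(t)\xi}{2(1+t^2)}\frac{a_{l\pm1}}{a_l}$ of \eqref{eq:coeff2}. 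The key observation is that the constant--coefficient operator $\frac{f(t)}{1+t^2}\cos(x)\p_y$ is anti-symmetric on $L^2$ and therefore does not contribute to the real part of the bilinear form, which is the quantity relevant for an energy estimate; here the normalisation by $|\p_x|^{1/2}$ is what makes this operator correspond exactly to the constant part of the coefficients. Subtracting it, I would replace $h_l^{\pm}$ by
\begin{align*}
  h_l^{\pm}\mp\frac{f(t)\xi}{2(1+t^2)}=\pm\frac{f(t)\xi}{2(1+t^2)}\left(\frac{a_{l\pm1}}{a_l}-1\right),
\end{align*}
and $g_l^{\pm}$ analogously; this is exactly the cancellation flagged before the statement and parallels the treatment of the term $\mathcal{T}_{N}^{p,1}$ in \cite[Section 6.3.1]{bedrossian21}.

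The heart of the matter is then a pointwise estimate of the discrete derivative $\frac{a_{l\pm1}}{a_l}-1$. Writing $\phi(l)=l^2+(\xi-lt)^2=a_l^4$, I would use
\begin{align*}
  \phi(l+1)-\phi(l)=(2l+1)(1+t^2)-2t\xi=2\bigl(l-t(\xi-lt)\bigr)+(1+t^2),
\end{align*}
in which the leading terms cancel near the resonance $l\approx\xi/t$. The essential use of the hypothesis $t<\sqrt{\xi}$ is that then $\phi$ is bounded below by $\xi$ uniformly in $l$, since its minimum over real $l$ equals $\frac{\xi^2}{1+t^2}\gtrsim\xi$; this both guarantees $a_{l\pm1}\approx a_l$ (so that $\frac{a_{l\pm1}}{a_l}-1\approx\frac{\phi(l\pm1)-\phi(l)}{4\phi(l)}$) and controls the denominators. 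Combining this with the bound $\frac{|l-t(\xi-lt)|}{l^2+(\xi-lt)^2}\lesssim\frac{1+t^2}{\xi}$, which follows from $\max(|l|t,|\xi-lt|)\geq\frac{\xi}{2}$ (a consequence of $lt+(\xi-lt)=\xi$), I obtain
\begin{align*}
  \sup_l\left|h_l^{\pm}\mp\tfrac{f(t)\xi}{2(1+t^2)}\right|\lesssim\frac{f(t)\xi}{1+t^2}\cdot\frac{1+t^2}{\xi}=f(t),
\end{align*}
uniformly for $0<t<\sqrt{\xi}$, and likewise for the $g_l^{\pm}$. The finitely many exceptional modes $l\in\{-1,0,1\}$, where the $|\p_x|^{\pm1/2}$--normalisation and the $x$--average conventions differ, are estimated directly.

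With the effective coefficients thus bounded by $f(t)$, the conclusion follows by the weighted energy argument already used in Proposition \ref{proposition:smalltime} and Lemma \ref{lemma:smalltimelarger}. Taking the energy $\exp(\lambda(t)\xi^{\sigma})\|(Z,Q)\|_{\ell^2}^2$ with $\lambda$ decreasing and bounded below, it suffices to choose $\lambda$ so that $\dot\lambda\,\xi^{\sigma}+\sup_l|h_l^{\pm}\mp\tfrac{f(t)\xi}{2(1+t^2)}|\leq0$, which for $\sigma=\tfrac12$ is possible because, using the defining relation $\epsilon T^{3/2}\xi^{-1/2}=\frac{1}{4C_\alpha}$,
\begin{align*}
  \int_0^{\min(\sqrt{\xi},T)}f(t)\,dt\lesssim\xi^{1/2},
\end{align*}
where in the case $\sqrt{\xi}<T$ one invokes $\xi<\epsilon^{-4}$. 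This yields the claimed bound by $\exp(c_\sigma\xi^{\sigma})\|(Z,Q)(0)\|_{\ell^2}^2$, and any larger $\sigma$ follows a fortiori.

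The step I expect to be the main obstacle is the uniform coefficient estimate of the middle paragraph: one must verify that the cancellation genuinely removes the dangerous factor $\frac{\xi}{1+t^2}$ for every mode $l$ --- in particular for the modes adjacent to the resonance, where $\frac{a_{l\pm1}}{a_l}$ is closest to being large --- and this hinges precisely on the lower bound $\phi\gtrsim\xi$ that is available only for $t<\sqrt{\xi}$, the very regime to which the lemma is restricted. Reconciling this with the special low--mode conventions and tracking the $|\p_x|^{\pm1/2}$ weights in the subtraction is the remaining bookkeeping; the final Gronwall estimate is then routine.
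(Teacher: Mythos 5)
Your proposal is correct and shares the paper's overall skeleton: exploit the anti-symmetry of $\cos(x)\p_y$ on $\ell^2$ to subtract the constant part of the coefficients, prove that the remaining effective coefficients are bounded by $C f(t)$ uniformly in $l$ (this is exactly the paper's claim \eqref{eq:claimsmall}), and close with a weighted Gronwall argument. Where you genuinely differ is in the execution of both halves. For the coefficient bound, the paper splits into resonant and non-resonant modes and applies the intermediate value theorem to the map underlying $(1+(\xi/l-t)^2)^{1/4}$, treating $l=k$ separately; you instead work with $\phi(l)=l^2+(\xi-lt)^2$, compute the discrete difference $\phi(l\pm1)-\phi(l)=\pm2\bigl(l-t(\xi-lt)\bigr)+(1+t^2)$ exactly, and combine the global lower bound $\min_{l\in\R}\phi=\xi^2/(1+t^2)\gtrsim\xi$ (which is precisely where the hypothesis $t<\sqrt{\xi}$ enters) with a Cauchy--Schwarz bound to get an estimate uniform over all $l$ with no resonant/non-resonant case analysis; this is arguably cleaner and makes the role of the restriction $t<\sqrt{\xi}$ more transparent. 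For the Gronwall step, the paper bounds $f(t)\le\delta$ pointwise and inserts $1\le\xi^{\sigma}(1+t)^{-2\sigma}$, which requires $\sigma>\frac12$ for integrability; you instead integrate $f$ directly, using the defining relation $\epsilon T^{3/2}\xi^{-1/2}=\frac{1}{4C_\alpha}$ (and $\xi<\epsilon^{-4}$ in the case $\sqrt{\xi}<T$) to obtain $\int_0^{\min(\sqrt{\xi},T)}f\,dt\lesssim\xi^{1/2}$, which reaches the endpoint $\sigma=\frac12$ --- slightly stronger than what the paper's own proof delivers. Both routes prove the lemma; yours buys the sharper exponent and a unified coefficient estimate, while the paper's case split mirrors the pattern it reuses in its other resonance estimates.
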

We remark that for this estimate we only establish stability in the unweighted
$\ell^2$ space, since the proof exploit that the shear
$f(t)\cos(x)\p_y$ is anti-symmetric on $L^2$.
\begin{proof}
  We observe that for $t\rightarrow \infty$ or $l \rightarrow
  \infty$
  \begin{align*}
    |\p_x|^{1/4} \Delta_{t}^{1/4} \rightarrow 1.
  \end{align*}
  We may hence exploit the fact that the operator $f(t)\cos(x)\p_y $ is
  anti-symmetric and thus have to estimate
  \begin{align*}
  f(t)/(1+t^2) \xi ((1+(\xi/l-t)^2)^{1/4}(1+(\xi/(l\pm 1)-t)^2)^{-1/4}-1).
  \end{align*}
  We claim that this term can be estimated from above: 
  \begin{align}
    \label{eq:claimsmall}
  f(t)/(1+t^2) \xi ((1+(\xi/l-t)^2)^{1/4}(1+(\xi/(l\pm 1)-t)^2)^{-1/4}-1) \leq  f(t)
  \end{align}
  This is sufficient to conclude since $f(t)\leq \delta$ by assumption and for
  $1+t\leq \sqrt{\xi}$ we may insert a factor
  \begin{align*}
    1 = (1+t)^{2\sigma} (1+t)^{-2\sigma} \leq \xi^{\sigma} (1+t)^{-2\sigma}
  \end{align*}
  and $(1+t)^{-2\sigma}$ is integrable since $\sigma>1/2$.

  It thus remains to prove the claim \eqref{eq:claimsmall}.
  We may rewrite the last factor in the term to be estimated as
  \begin{align*}
    & \quad (1+(\xi/l-t)^2)^{1/4}(1+(\xi/(l\pm 1)-t)^2)^{-1/4}-1\\
    &= (1+(\xi/(l\pm 1)-t)^2)^{-1/4} ((1+(\xi/l-t)^2)^{1/4}- (1+(\xi/(l\pm 1)-t)^2)^{1/4}).
  \end{align*}
  We first discuss the case when $l$ and $l \pm 1$ do not equal $k$.
  In this case by the intermediate value theorem there exists
  \begin{align*}
    \frac{\xi}{l} < \eta < \frac{\xi}{l\pm 1}
  \end{align*}
  such that
  \begin{align*}
    (1+(\xi/l-t)^2)^{1/4}- (1+(\xi/(l\pm 1)-t)^2)^{1/4} \leq (1+(\eta -t)^2)^{-3/4} \frac{\xi}{l(l\pm 1)}.
  \end{align*}
  Since both $l$ and $l\pm 1$ are non-resonant it follows that $|\eta-t|\geq
  \frac{\xi}{l(l\pm 1)}$ and $|\frac{\xi}{l}-t| \geq \frac{\xi}{l(l\pm 1)}$.
  Summarizing for this case we obtain that
  \begin{align*}
    \xi ((1+(\xi/l-t)^2)^{1/4}(1+(\xi/(l\pm 1)-t)^2)^{-1/4}-1) \leq C
  \end{align*}
  and thus obtain a bound by $f(t)/(1+t^2)\leq f(t)$.

  For the remaining resonant cases the potentially largest one is given by $l=k$.
  In that case we may estimate 
  \begin{align*}
    \xi ((1+(\xi/l-t)^2)^{1/4}(1+(\xi/(l\pm 1)-t)^2)^{-1/4}-1) \\
    \leq (\frac{\xi}{k})^2 (1+(\frac{\xi}{k}-t)^2)^{-1/4}. 
  \end{align*}
  We thus obtain a bound of the total contribution by
  \begin{align*}
    f(t) (1+(\frac{\xi}{k}-t)^2)^{-1/4}\leq f(t),
  \end{align*}
  which concludes the proof.  
\end{proof}

\subsection*{Acknowledgments}
Funded by the Deutsche Forschungsgemeinschaft (DFG, German Research Foundation) – Project-ID 258734477 – SFB 1173.

\bibliography{citations2}
\bibliographystyle{alpha}
\end{document}